\theoremstyle{plain}
\newtheorem{prop}{Proposition}
\newtheorem{theorem}[prop]{Theorem}
\newtheorem{definition}[prop]{Definition}
\newtheorem{lemma}[prop]{Lemma}
\newtheorem{remark}[prop]{Remark}
\newtheorem{assum}[prop]{Assumption}
\numberwithin{equation}{section}
\newtheorem{notation}[prop]{Notations}
\newtheorem*{conjecture-nonum}{Conjecture}
\newtheorem*{theorem-nonum}{Theorem}
\DeclareMathOperator{\Hom}{Hom}
\DeclareMathOperator{\Spec}{Spec}
\DeclareMathOperator{\supp}{supp}
\DeclareMathOperator{\trace}{tr}
\DeclareMathOperator{\vol}{vol}
\newcommand{\real}{\mathbb{R}}
\newcommand{\comp}{\mathbb{C}}
\newcommand{\inte}{\mathbb{Z}}
\newcommand{\pd}{\partial}
\newcommand{\dd}[1]{\frac{\partial}{\partial #1}}
\newcommand{\lieg}[1]{\mathcal{L}_{\nabla{#1}}}
\newcommand{\half}{\frac{1}{2}}
\newcommand{\ppd}[1]{\frac{\partial^2}{\partial #1^2}}
\newcommand{\Hess}{\nabla^2}
\newcommand{\lam}{\lambda}
\newcommand{\dr}[1]{DR_\lam (#1)}
\newcommand{\adr}[1]{DR_{\lam} (#1)_{sm}}
\newcommand{\morse}{Morse(M)}
\newcommand{\order}{\mathcal{O}}
\newcommand{\tpsi}{\tilde{\psi}}
\newcommand{\mtree}{\tilde{T}}
\newcommand{\ot}{\mathcal{T}}
\newcommand{\dist}{\rho}
\newcommand{\morprod}[1]{m^{Morse}_{#1}}
\newcommand{\deprod}[1]{m_{#1}(\lam)}
\newcommand{\deprodtree}[2]{m_{#1}^{#2}(\lam)}
\begin{document}
\title[Fukaya's conjecture on Witten's twisted $A_{\infty}$-structures]{Fukaya's conjecture on Witten's twisted $A_{\infty}$-structures}
\author[CHAN, LEUNG and MA]{Kaileung Chan, Naichung Conan Leung and Ziming Nikolas Ma}

\address{The Institute of Mathematical Sciences and Department of Mathematics, The
Chinese University of Hong Kong, Hong Kong}
\email{leung@math.cuhk.edu.hk}
\address{Department of Mathematics, The
Chinese University of Hong Kong, Hong Kong}
\email{klchan@math.cuhk.edu.hk}
\address{Department of Mathematics, The
	Chinese University of Hong Kong, Hong Kong}
\email{nikolasming@outlook.com}

\maketitle

\begin{abstract}
The wedge product on de Rham complex of a Riemannian manifold $M$ can be pulled back to $H^*(M)$ via explicit homotopy constructed by using Green's operator which gives higher product structures. We prove Fukaya's conjecture which suggests that Witten deformation of these higher product structures have semiclassical limits as operators defined by counting gradient flow trees with respect to Morse functions, which generalizes the remarkable Witten deformation of de Rham differential from a statement concerning homology to one concerning real homotopy type of $M$. Various applications of this conjecture to mirror symmetry are also suggested by Fukaya in \cite{fukaya05}.
\end{abstract}
\section{Introduction}\label{introduction} 
It is known that the differential graded algebra $(\Omega^*(M) ,d, \wedge)$ on a smooth manifold $M$ determines real homotopy type of $M$ (if $\pi_1(M) = 0$), a simplified homotopic classification of manifolds founded by Quillen \cite{quillen1969} and Sullivan \cite{sullivan1977}. If $M$ is a compact oriented Riemannian manifold, Hodge decomposition of the Laplacian $\Delta$ enables us to represent the cohomology of $M$ by the finite dimensional kernel $\Omega^*(M)_0 \subset \Omega^*(M)$ of $\Delta$. The real homotopy type can be captured by the homotopic pullback of the wedge product to $\Omega^*(M)_0$, which gives an $A_\infty$ structure via the homological perturbation lemma in \cite{kontsevich00}.

On the other hand, equipping $M$ with a Morse-Smale function $f$ allows us to study the cohomology of the manifold by the associated Morse complex $CM^*_f$, which is a finite dimensional vector space freely generated by critical points of $f$ equipped with the Morse differential $\delta$ defined by counting gradient flow lines of $f$. Higher product structures can be introduced to enhance the Morse complex to the Morse $A_\infty$ (pre)-category defined as in \cite{a1, fukayamorse}, involving $A_\infty$ products $\{m^{Morse}_k\}_{k\in \inte_+}$ defined by counting gradient trees. 

In Fukaya's paper \cite{fukaya05}, he conjectured that the above two $A_\infty$ product structures can be related to each others via Witten deformation. It is a differential geometric approach suggested in an influential paper \cite{witten82} by Witten to relate Hodge theory to Morse theory by deforming the exterior differential operator
$d$ with\[ d_{f} := e^{-\lam f} d e^{\lam f} = d + \lam df\wedge,\] by a Morse function $f$ with large parameter $\lam \in\real^+$. In this paper, we prove this conjecture by Fukaya. 

This machinery plays an important role in understanding SYZ transformation of open strings datum and provides a geometric explanation for Kontsevich's Homological Mirror Symmetry (Abbrev. HMS) as Fukaya stated in \cite{fukaya05}.\\

More precisely, given a Morse-Smale function $f$, we can define the Witten's twisted Laplace operator by
\begin{equation}\label{Witten_Laplacian} \Delta_{f} := d_{f} d_{f}^* + d_{f}^* d_{f}.\end{equation}
Witten considered the eigenvalues of the operator $\Delta_{f}$ lying inside an interval $[0,1)$, then the sum of the corresponding eigenspaces \newline $\Omega^*(M,\lam)_{sm} \subset \Omega^*(M)$ could be identified with the Morse complex $CM^*_f$ via a linear map (see \eqref{Morse_de Rham_map})
\begin{equation}\label{wittenmap} \phi = \Phi^{-1} : CM^*_{f} \rightarrow \Omega^*(M,\lam)_{sm}. 
\end{equation}
For any critical point $q$ of $f$, $\phi(q)$ will concentrate near $q$ when $\lam$ is large enough. Furthermore, the Witten differential $d_{f}$ is also identified with the Morse differential $m_1^{Morse}$ via $\phi$. The original proof can be found in \cite{HelSj1, HelSj2, HelSj4} while readers may see \cite{zhang} for a detailed introduction.

In order to incorporate the product structure, we have to consider more than one Morse function and the Leibniz rule associated to twisted differential is given by
$$
d_{g+h} (\alpha \wedge \beta) = d_{g}(\alpha) \wedge \beta + (-1)^{|\alpha|} \alpha \wedge d_{h}(\beta).
$$ 
This leads to the notation of the differential graded (dg) category $\dr{M}$, with objects being smooth functions on $M$. The corresponding morphism complex between two objects $f_i$ and $f_j$ is given by the Witten twisted complex $\Omega_{ij}^*(M,\lam) = (\Omega^*(M) , d_{f_{ij}})$, where $f_{ij} = f_j-f_i$. When $f_{ij}$ satisfies the Morse-Smale condition, we can define $\Omega_{ij}^*(M,\lam)_{sm}$ and a homotopy retraction $P_{ij}:\Omega^*_{ij}(M,\lam) \rightarrow \Omega_{ij}^*(M,\lam)_{sm}$ using the explicit homotopy $H_{ij} = d_{f_{ij}}^* G_{ij}$, where $G_{ij}$ is the twisted Green's operator. We can pull back the wedge product via the homotopy, making use of homological perturbation
lemma in \cite{kontsevich00}, to give a Witten's deformed $A_{\infty}$ (pre)\footnote{Roughly speaking, an $A_\infty$ pre-category allows morphisms and $A_\infty$-operations to be defined only on a subcollection of objects, called a generic subcollection, but the $A_\infty$ relation still holds whenever it is defined. Algebraic construction can be done on an $A_{\infty}$ pre-category to obtain an honest $A_\infty$ category which consists of essentially the same amount of information, and so we will restrict ourselves to $A_\infty$ pre-categories.}-category $\adr{M}$ with $A_\infty$ structure $\{m_k(\lam)\}_{k\in \inte_+}$.

For instant, suppose we have smooth functions $f_0$, $f_1$, $f_2$ and $f_3$ such that their pairwise differences are Morse-Smale, and let $\varphi_{ij}\in\Omega^*_{ij}(M,\lam)_{sm}$. Then the higher product 
$$\deprod{3}:\Omega^*_{23}(M,\lam)_{sm}\otimes\Omega^*_{12}(M,\lam)_{sm}\otimes\Omega^*_{01}(M,\lam)_{sm}\rightarrow\Omega^*_{03}(M,\lam)_{sm}$$
 is defined by
\begin{multline}
\deprod{3}(\varphi_{23},\varphi_{12},\varphi_{01}) \\
=P_{03}(H_{13}(\varphi_{23}\wedge\varphi_{12})\wedge\varphi_{01}) + P_{03}( \varphi_{23}\wedge H_{02}(\varphi_{12}\wedge\varphi_{01})).
\end{multline}
In general $m_{k}(\lam)$ will be given by a combinatorial formula involving summation over directed planar trees with $k$ inputs and $1$ output, with wedge product $\wedge$ being applied at vertices and the homotopy operator $H_{ij}$ being applied at internal edges.

Fukaya's conjecture says that the $A_\infty$ structure $\{m_k(\lam)\}_{k \in \inte_+}$, defined using the twisted Green's operators, has leading order given by $\{m_k^{Morse}\}_{k \in \inte_+}$, defined by counting gradient flow trees, via the isomorphism $\phi$.

\begin{conjecture-nonum}[Fukaya \cite{fukaya05}]
For \textit{generic} (see Definition \ref{genericassumption}) sequence of functions $\vec{f} = (f_0,\dots,f_k)$, with corresponding sequence of critical points $\vec{q} = ( q_{01}, q_{12} ,\dots , q_{(k-1)k})$, namely, $q_{ij}$ is a critical point of $f_{ij}$, we have
\begin{equation} 
\Phi (m_k(\lam)(\phi(\vec{q}))) = m^{Morse}_k(\vec{q})+ \mathcal{O}(\lam^{-1/2}). 
\end{equation}
\end{conjecture-nonum}

\begin{theorem-nonum}[Main Theorem]
Fukaya's conjecture is true.
\end{theorem-nonum}

As $A_\infty$ relations of $\{m_k(\lam)\}_{k\in \inte_+}$ are obvious from their algebraic constructions while those of $\{
m_k^{Morse}\}_{k \in \inte_+}$ require studies for boundaries of moduli spaces of gradient flow trees (see e.g. \cite{a1, fukayamorse}), we obtain an alternative proof for $A_\infty$ relations of $\{m_k^{Morse}\}_{k \in \inte_+}$ as an corollary.

The papers \cite{HelSj1, HelSj2, HelSj4, zhang} gives the proof of the main theorem for the case $k=1$, which involves detailed estimate of operator $d_f$ along gradient flow lines of a Morse function $f$ (or $f_{01}$ in our notations). 


For the case $k=2$, we let $f_0, f_1 , f_2$ be three smooth functions and let $q_{01}, q_{12}, q_{02}$ be critical points of $f_{01} ,
f_{12}, f_{02}$ respectively. By using the analytical techniques in \cite{HelSj1,HelSj4}, it can be proved that the Green's operators $G_{ij}$'s do not appear in the definition of $m_2(\lam)$. If we compute the leading order term in
the matrix coefficients of $m_2(\lam)$, it is essentially the integral 
\begin{equation}\label{m2coefficient} \int_{M}  m_2(\lam)(
\phi(q_{01}), \phi(q_{12})) \wedge \frac{\ast \phi(q_{02})}{\|\phi(q_{02})\|^2}. \end{equation}

Firstly, we perform a global a priori estimate to obtain $\phi(q_{ij}) \sim \mathcal{O}(e^{\lam \rho(q_{ij},\cdot)})$ (lemma \ref{eigenestimate2}), where $\rho$ is the Agmon distance defined in definition \ref{agmondistance}. Therefore, we cut off the integrand to neighborhoods of gradient trees appeared in $m_2^{Morse}$ and compute the leading order contribution from each gradient tree. The WKB approximation (lemma \ref{eigenwkb}) of the $\phi(q_{ij})$'s is used to compute the leading order contribution of \eqref{m2coefficient}.

The technicality for studying the case when $k \geq 3$ is that an WKB approximation of $G_{ij}$ along a gradient flow line of $f_{ij}$ is needed (refer to \S \ref{approximation}). More precisely, for a given form $e^{-\lam  \psi^{}_{\scalebox{.7}{$\scriptscriptstyle S$}}} \nu$, we need to study the local behaviour of the inhomogeneous Witten Laplacian equation of the form
\begin{equation}\label{wittenequation} \Delta_{ij} \zeta_E = d^*_{ij}(e^{-\lam  \psi^{}_{\scalebox{.7}{$\scriptscriptstyle S$}}} \nu) \end{equation}

along a gradient flow line segment of $f_{ij}$ from a starting point $x_S$ to an endpoint $x_E$, and obtain an approximation of $\zeta_E$ of the form $$\zeta_E \sim e^{-\lam \psi^{}_{\scalebox{.7}{$\scriptscriptstyle E$}}}\lam^{1/2}(\omega_{E,0} + \omega_{E,1}\lam^{-1/2} +
\dots).$$ 

The key step in our proof is to determine $\psi_E$ from $\psi_S$ and detailed construction is given in \S \ref{approximation}. A naive guess is $\tilde{\psi}_{E}(x) := \inf_{y} (\psi_S(y) + \dist(y,x))$ which captures the desired behaviors of $\psi_E$ near $x_E$. Unfortunately, $\tilde{\psi}_{E}(x)$ is singular along a hypersurface $U_S$ containing $x_S$ and it prohibits us to solve equation \eqref{wittenequation} iteratively in order of $\lam^{-1}$.

In order to solve \eqref{wittenequation} iteratively, we consider the minimal configuration in variational problem associated to $\inf_{y} (\psi_S(y) + \dist(y,x))$. It turns out that the point $y$ must lie on $U_S$, with a unique geodesic joining $x$ which realizes $\dist(y,x)$, for those $x$ closed enough to $x_E$. This family of geodesics $\{ \gamma_y\}_{y \in U_S}$ gives a foliation of a neighborhood of the flow line segment. Therefore we can use $\psi_E(\gamma_y(t)) = \psi_S(y) + t $ as an extension of $\tilde{\psi}_E$ across $U_S$ and solve the Equation \eqref{wittenequation} iteratively. 

We will prove the main theorem for $k=3$ by using the analysis of $G_{ij}$. The proof of the general case is similar, but more combinatorics involvoed.


The latter of this paper consists of two parts. The first part is the setup in \S \ref{setting} and the proof in \S \ref{proof} modulo technical analysis. The second part is the study of Witten twisted Green operator in \S\ref{approximation}.

\section{Setting}\label{setting}

\subsection{Morse category}\label{sec:Morse_category}

We begin with a review on Morse theory and Morse category, more detail can be found in \cite{a1, fukayamorse, fukaya-oh, harvey01, kontsevich00}. The Morse category $\morse$ has the class of objects being smooth functions $f:M \rightarrow \real$, with the space of morphisms between two objects given by
\[ Hom^*_{\morse}(f_i,f_j) = CM^*(f_{ij}) = \bigoplus_{q\in Crit(f_{ij})} \comp \cdot e_q.\]
which is the Morse complex when $f_{ij}:=f_j-f_i$ satisfies the following Morse-Smale condition:

\begin{definition}\label{morsesmale}
A Morse function $f_{ij}$ is a Morse-Smale function if $V^+_p$ and $V^-_q$ intersecting transversally for any two critical points $p\neq q$ of $f_{ij}$.
\end{definition}

The Morse complex is graded by the Morse index of the corresponding critical point $q$, which is the dimension of unstable submanifold $V^-_q$. The Morse category $\morse$ is an $A_{\infty}$-category equipped with higher products $\morprod{k}$ for every $k\in \inte_+$, or simply denoted by $m_k$, which are given by counting gradient flow trees. To describe that, we first need some terminologies about directed trees.

\subsubsection{Directed trees}\label{treedefinition}

\begin{definition}\label{dtree}
A trivalent directed $k$-leafed tree $T$ is an embedded tree in $\real^2$, together with the following data:
\begin{itemize}
\item[(1)] a finite set of vertices $V(T)$;
\item[(2)] a set of internal edges $E(T)$;
\item[(3)] a set of $k$ semi-infinite incoming edges $E_{in}(T)$;
\item[(4)] a semi-infinite outgoing edge $e_{out}$.
\end{itemize}
Every vertex is required to be trivalent, i.e. it has two incoming edges and one outgoing edge.
\end{definition}

For simplicity, we will call it a $k$-tree. They are identified up to orientation preserving continuous map of $\real^2$ preserving the vertices and edges. Therefore, the topological class for $k$-trees will be finite.

Given a $k$-tree, by fixing the anticlockwise orientation of $\real^2$, we have cyclic ordering of all the semi-infinite edges. We can label connected components of $\real^2 \setminus T$ by integers $0,\dots,k$ in anticlockwise ordering, inducing a labelling on edges such that edge $e$ labelled with $ij$ will be lying between components $i$ and $j$ with the unique normal to $e$ pointing in component $i$. The labelling can be fixed uniquely by requiring the outgoing edge to be labelled by $0k$. For example, there are two different topological types for $3$-trees, with corresponding labelings for their edges as shown in the following figure \ref{3trees}.

\begin{figure}[h]
\includegraphics[scale=0.2]{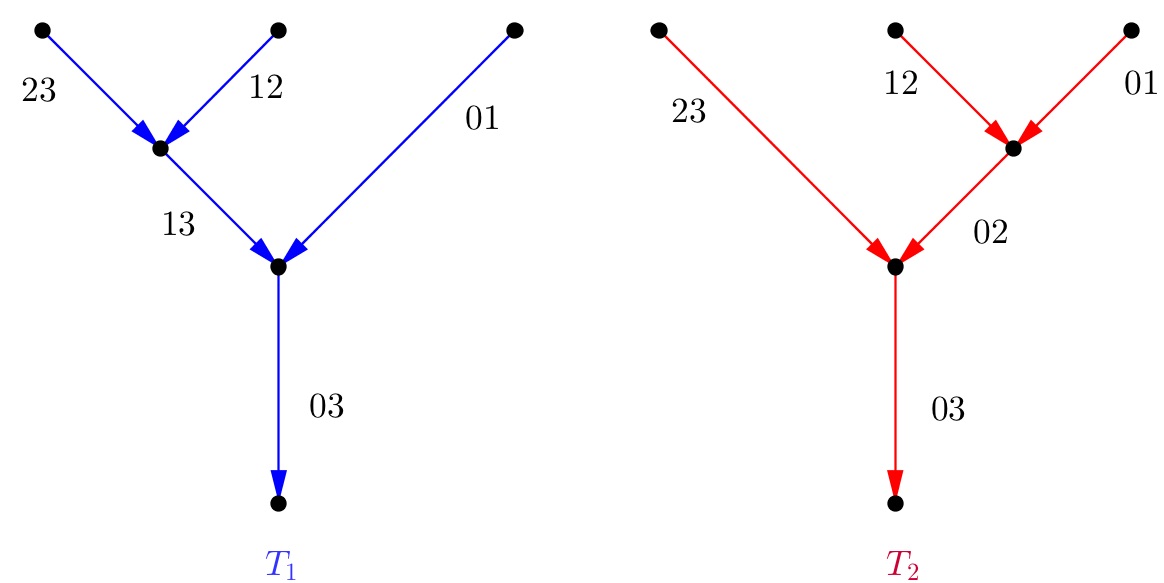}
\caption{Two different types of $3$-trees}
\label{3trees}
\end{figure}

\begin{notation}
A pair $(e,v)$, with $e$ being an edge (either finite or semi-infinite) and $v$ being an adjacent vertex, is called a flag. The unique vertex attached to the outgoing semi-infinite edge is called the root vertex. 
\end{notation}

For the purpose of Morse homology, we need the following notation of metric trees. 

\begin{definition}
A metric $k$-tree $\mtree$ is a $k$-tree together with a length function $l:E(T)\rightarrow(0,+\infty)$.
\end{definition}

Metric $k$-trees are identified up to homeomorphism preserving the length functions. The space of metric $k$-trees has finite number of components, with each component corresponding to a topological type $T$. The component corresponding to $T$, denoted by $\mathcal{S}(T)$, is a copy of $(0,+\infty)^{|E(T)|}$, where $|E(T)|$ is the number of internal edges and equals to $k-2$. The space $\mathcal{S}(T)$ can be partially compactified to a manifold with corners $(0,+\infty]^{|E(T)|}$, by allowing the length of internal edges to be infinity. In particular, it has codimension-$1$ boundary\[
\partial \overline{\mathcal{S}(T)} = \coprod_{T=T{'}\sqcup T^{''}} \mathcal{S}(T^{'})\times \mathcal{S}(T^{''}),\]
where the equation $T=T{'}\sqcup T^{''}$ means splitting the tree $T$ into $T^{'}$ and $T^{''}$ at an internal edge.

\subsubsection{Morse $A_{\infty}$ structure}
We are going to describe the product $m_k$ of the Morse category. First of all, one may notice that the morphisms between two objects $f_i$ and $f_j$ is only defined when $f_{ij}$ is Morse. Given a sequence $\vec{f} = (f_0,\dots,f_k)$ such that all the difference $f_{ij}$'s are Morse, with a sequence of points $\vec{q} = (q_{01},\dots q_{(k-1)k},q_{0k})$ such that $q_{ij}$ is a critical point of $f_{ij}$, we have the following definition of gradient flow tree.

\begin{definition}
A gradient flow tree $\Gamma$ of $\vec{f}$ with endpoints at $\vec{q}$ is a continuous map $\mathbf{f} :\mtree \rightarrow M$ such that it is an upward gradient flow lines of $f_{ij}$ when $\mathbf{f}$ is restricted on the edge labelled $ij$, the incoming edge $i(i+1)$ begins at the critical point $q_{i(i+1)}$ and the outgoing edge $0k$ ends at the critical point $q_{0k}$. 
\end{definition}

We use $\mathcal{M}(\vec{f},\vec{q})$ to denote the moduli space of gradient trees (in the case $k=1$, the moduli of gradient flow line of a single Morse function has an extra $\real$ symmetry given by translation in the domain. We will use this notation for the reduced moduli, that is the one after taking quotient by $\real$). It has a decomposition according to topological types\[
\mathcal{M}(\vec{f},\vec{q}) = \coprod_{T} \mathcal{M}(\vec{f},\vec{q})(T).\]

This space can be endowed with smooth manifold structure if we put generic assumption on the Morse sequence, which will be described as follows. For an incoming critical point $q_{i(i+1)}$, with corresponding stable submanifold $V^+_{q_{i(i+1)}}$, we define a map\[
\mathbf{f}_{T,i(i+1)} : V^+_{q_{i(i+1)}} \times \mathcal{S}(T) \rightarrow M.\]
Fixing a point $x$ in $V^+_{q_{i(i+1)}}$ together with a metric tree $\tilde{T}$, we need to determine a point in $M$. First, suppose $v$ is the vertex connected to the edge labelled $i(i+1)$, there is a unique sequence of internal edges $(e_1,\dots,e_{k-2})$ connecting $v$ to the root vertex $v_r$. To determine the image of $x$ under our function, we apply Morse gradient flow with respect to Morse function associated to $e_{j}$'s for time $l(e_{j})$ to $x$ consecutively according to the sequence $(e_1,\dots,e_{k-2})$.

The maps are then put together to give a map
\begin{equation}
\mathbf{f}_T : V^-_{q_{0k}}\times V^+_{q_{(k-1)k}} \times \cdots \times V^+_{q_{01}} \times \mathcal{S}(T)  \rightarrow \prod_{k+1} M,
\end{equation}
where we use the embedding $\iota:V_{q_{0k}}^- \rightarrow M$ for the first component. 
\begin{definition}\label{genericassumption}
A Morse sequence $\vec{f}$ is said to be generic if the image of $\mathbf{f}_{T}$ intersects transversally with the diagonal submanifold $\Delta \cong M\hookrightarrow M^{k+1}$, for any sequence of critical points $\vec{q}$ and any topological type $T$.
\end{definition}

When the sequence is generic, the moduli space $\mathcal{M}(\vec{f},\vec{q})$ is of dimension\[
\dim_{\real}(\mathcal{M}(\vec{f},\vec{q}) )= \deg(q_{0k}) - \sum_{i=0}^{k-1} \deg(q_{i(i+1)}) + k-2,\]
where $\deg(q_{ij})$ is the Morse index of the critical point. Therefore, we can define $m_k^{Morse}$, or simply denoted by $m_k$, using the signed count $\# \mathcal{M}(\vec{f},\vec{q})$ of points in $\dim_{\real}(\mathcal{M}(\vec{f},\vec{q}) )$ when it is of dimension $0$. In order to have a signed count, we have to fix an orientation of the space $\mathcal{M}(\vec{f},\vec{q})$ which will be discussed later in definition \ref{flowtreeorientation2}.\\

We now give the definition of the higher products in the Morse category.
\begin{definition}
Given a generic Morse sequence $\vec{f}$ with sequence of critical points $\vec{q}$, we define 
$$
m_k : CM_{k(k-1)}^* \otimes \cdots \otimes CM^*_{01} \rightarrow CM^*_{0k}
$$
by
\begin{equation}
\langle m_k(q_{(k-1)k},\dots,q_{01}), q_{0k}\rangle =  \# \mathcal{M}(\vec{f},\vec{q}),
\end{equation}
when\[
\deg(q_{0k}) - \sum_{i=0}^{k-1} \deg(q_{i(i+1)}) + k-2 = 0.\]
Otherwise, the $m_k$ is defined to be zero.
\end{definition}

Since $\morprod{k}$ can only be defined when $\vec{f}$ is a Morse sequence satisfying the generic assumption in definition \ref{genericassumption}, the Morse category is indeed an $A_\infty$ pre-category. Readers may see \cite{a1, fukayamorse} for the detail of algebraic construction to retrive an honest $A_\infty$ category.


\subsection{Witten's twisted de Rham category}\label{sec:Witten_twisted_derham}
Given a compact oriented Riemannian manifold $M$, we can construct the de Rham category $\dr{M}$ depending on $\lam$. Objects of this category are again smooth functions, while the space of morphisms between $f_i$ and $f_j$ is\[
\Hom^*_{\dr{M}}(f_i,f_j) = \Omega^*(M),\]
with the twisted differential $ d+ \lam df_{ij}\wedge$, where $f_{ij}:=f_j-f_i$. The composition of morphisms is defined to be the wedge product of differential forms on $M$. This composition is associative and hence the resulted category is a dg category. We denote the complex corresponding to $\Hom^*_{\dr{M}}(f_i,f_j)$ by $\Omega^*_{ij}(M,\lam)$ and the differential $d+\lam df_{ij}\wedge$ by $d_{ij}$.

To relate $\dr{M}$ and $\morse$, we need to apply homological perturbation to $\dr{M}$. Fixing two functions $f_i$ and $f_j$, we consider the Witten Laplacian
$$
\Delta_{ij}:=d_{ij} d^*_{ij} + d^*_{ij} d_{ij},
$$
where $d_{ij}^* =  d^* + \lam \iota_{\nabla f_{ij}}$. We denote the span of eigenspaces with eigenvalues contained in $[0,1)$ by $\Omega^*_{ij}(M,\lam)_{sm}$.

If the function $f_{ij}$ is a Morse-Smale function (see definition \ref{morsesmale}), it is proved in \cite[Appendix: On the Thom-Smale complex]{La92} that the closure $\overline{V^+_q}$ and $\overline{V^-_q}$ have a structure of submanifold with conical singularities. Using this result, one can define the following map as in \cite{zhang, harvey01}\[
\Phi = \Phi_{ij} : \Omega^*_{ij}(M,\lam)_{sm} \rightarrow CM^*(f_{ij})\]
given by 
\begin{equation}\label{Morse_de Rham_map}
\Phi(\alpha) = \sum_{p \in\;Crit(f_{ij})} \left( \int_{V^{-}_p} e^{\lam f_{ij}} \alpha \right) \cdot e_p
\end{equation}
which is an isomorphism identifying $d_{ij}$ with Morse differential $m_1$ when $\lam$ large enough. 

\begin{remark}
This identification gives a connection on the family of vector space $\Omega^*_{ij}(M,\lam)_{sm}$ parametrized by $\lam$ by declaring the basis $e_p$ associated to critical point of $f_{ij}$ to be flat. Equivalently, it is the same as defining
$$
\nabla_{\dd{\lam}} \alpha(\lam) = P_{ij}(e^{-\lam f_{ij}} \dd{\lam} e^{\lam f_{ij}} \alpha(\lam)),
$$
for $\alpha(\lam) \in \Omega^*_{ij}(M,\lam)_{sm}$, where $P_{ij} :\Omega^*_{ij}(M,\lam) \rightarrow \Omega^*_{ij}(M,\lam)_{sm} \hookrightarrow \Omega^*_{ij}(M,\lam)$ is the idempotent associated to the orthogonal projection on $\Omega^*_{ij}(M,\lam)_{sm}$.
\end{remark}

It is natural to ask whether the product structures of two categories are related as $\lam \rightarrow \infty$, and the answer is definite. The first observation is that the Witten's approach indeed produces an $A_{\infty}$ pre-category, denoted by $\adr{M}$, with $A_{\infty}$ structure $\{\deprod{k}\}_{k\in \inte_+}$. It has the same class of objects as $\dr{M}$. However, the space of morphisms between two objects $f_i$, $f_j$ is taken to be $\Omega^*_{ij}(M,\lam)_{sm}$, with $m_1(\lam)$ being the restriction of $d_{ij}$ on the eigenspace $\Omega^*_{ij}(M,\lam)_{sm}$.\\

The natural way to define $\deprod{2}$ for any three objects $f_0$, $f_1$ and $f_2$ is the operation given by\[
\begin{CD} 
\Omega^*_{12}(M,\lam)_{sm} \otimes \Omega^*_{01}(M,\lam)_{sm}	@>\wedge>> \Omega^*_{02}(M,\lam) @>P_{02}>>  \Omega^*_{02}(M,\lam)_{sm}, \\
\end{CD} \]
$P_{ij} :\Omega^*_{ij}(M,\lam) \rightarrow \Omega^*_{ij}(M,\lam)_{sm} \hookrightarrow \Omega^*_{ij}(M,\lam)$ is the idempotent associated to the orthogonal projection to $\Omega^*_{ij}(M,\lam)_{sm}$.

Notice that $\deprod{2}$ is not associative, and we need a $\deprod{3}$ to record the non-associativity. Suppose that  $G_{ij}^0$ is the Green's operator corresponding to Witten Laplacian $\Delta_{ij}$, we let
\begin{equation}\label{smallG}
G_{ij}=(I-P_{ij})G_{ij}^0
\end{equation}
and
\begin{equation}\label{H_ij}
H_{ij}=d_{ij}^*G_{ij}.
\end{equation}
Then $H_{ij}$ is a linear operator from $\Omega^*_{ij}(M,\lam)$ to $\Omega^{*-1}_{ij}(M,\lam)$ such that\[
d_{ij}H_{ij}+H_{ij}d_{ij}=I-P_{ij}.\]
Namely $\Omega^*_{ij}(M,\lam)_{sm}$ is a homotopy retract of $\Omega^*_{ij}(M,\lam)$ with homotopy operator $H_{ij}$. Suppose $f_0$, $f_1$, $f_2$ and $f_3$ are smooth functions on $M$ and $\varphi_{ij}\in\Omega^*_{ij}(M,\lam)_{sm}$, the higher product 
$$\deprod{3}:\Omega^*_{23}(M,\lam)_{sm}\otimes\Omega^*_{12}(M,\lam)_{sm}\otimes\Omega^*_{01}(M,\lam)_{sm}\rightarrow\Omega^*_{03}(M,\lam)_{sm}$$
 is defined by
\begin{multline}\label{m3}
\deprod{3}(\varphi_{23},\varphi_{12},\varphi_{01}) = \\
P_{03}(H_{13}(\varphi_{23}\wedge\varphi_{12})\wedge\varphi_{01}) + P_{03}( \varphi_{23}\wedge H_{02}(\varphi_{12}\wedge\varphi_{01})).
\end{multline}

In general, the construction of $m_k(\lam)$ can be described using $k$-tree. For $k\geq2$, we decompose $\deprod{k}:=\sum_T \deprodtree{k}{T}$, where $T$ runs over all topological types of $k$-trees. 
$$\deprodtree{k}{T} : \Omega_{(k-1)k}^*(M,\lam)_{sm}\otimes \cdots \otimes \Omega_{01}^*(M,\lam)_{sm} \rightarrow \Omega_{0k}^*(M,\lam)_{sm}$$ 
is an operation defined along the directed tree $T$ by 
\begin{itemize}
\item[(1)] applying wedge product $\wedge$ to each interior vertex;
\item[(2)] applying homotopy operator $H_{ij}$ to each internal edge labelled $ij$;
\item[(3)] applying projection $P_{0k}$ to the outgoing semi-infinite edge.
\end{itemize}
The following graph shows the operation associated to the unique $2$-tree.
\begin{figure}[h]
\centering
\includegraphics[scale=0.45]{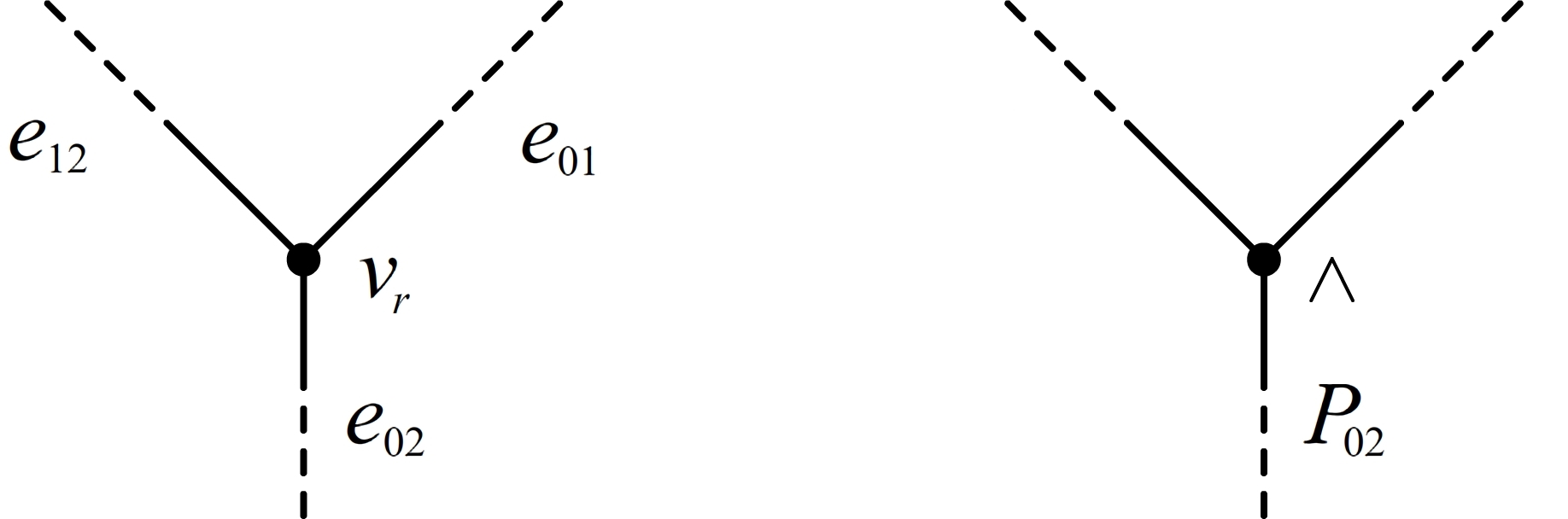}
\caption{The unique 2-tree and the corresponding assignment of operators for defining $\deprod{2}$.}
\label{2tree}
\end{figure}

The higher products $\{m_k(\lam)\}_{k \in \inte_+}$ satisfies the generalized associativity relation, called $A_\infty$ relation. One may treat the $A_\infty$ product as a pullback of the wedge product under the homotopy retract $P_{ij}: \Omega^*_{ij}(M,\lam) \rightarrow \Omega^*_{ij}(M,\lam)_{sm}$. This proceed is called the homological perturbation lemma. For details about this construction, readers may see \cite{kontsevich00}.  As a result, we obtain an $A_{\infty}$ pre-category $\adr{M}$. \\

With the above notations, we restate our main theorem as the following:
\begin{theorem}[Main Theorem]\label{main_theorem}
Given smooth functions $f_0,\ldots,f_k$ satisfying the generic assumption in definition \ref{genericassumption}, with $q_{ij} \in CM^*(f_{ij})$ be corresponding critical points, there exist $\lam_0>0$ and $C_0>0$, such that for all $i\neq j$, $\phi = \Phi^{-1}:CM^*(f_{ij}) \rightarrow \Omega^*_{ij}(M,\lam)_{sm}$ is an isomorphism when $\lam>\lam_0$. Furthermore,  
\begin{equation*}
\Phi( \deprod{k}(\phi(q_{(k-1)k}),\dots,\phi(q_{01}))) = \morprod{k}(q_{(k-1)k},\dots,q_{01})+R(\lam),
\end{equation*}
with $|R(\lam) | \leq C_0 \lam^{-1/2}$.
\end{theorem}

\begin{remark}
The constants $C_0$ and $\lam_0$ depend on the functions $f_0,\dots,f_k$. In general, we cannot choose fixed constants such that the above statement holds for all $\deprod{k}$ and all sequences of functions.
\end{remark}

\begin{remark}
We would like to emphasize the relation between the main theorem and SYZ Mirror Symmetry. Let $T^*M$ be the cotangent bundle of a manifold $M$ which equips the canonical symplectic form $\omega_{can}$, and let $L_i = \Gamma_{df_i}$ be Lagrangian sections. Then a critical point $q_{ij}$ of $f_{ij}$ can be identified with $q_{ij} \in L_i \pitchfork L_j$. Applying the theorem of Fukaya-Oh \cite{fukaya-oh}, the Morse $A_\infty$ operation $m_k^{Morse}$ is equivalent to Floer theoretical $A_\infty$ operations counting holomorphic disks. In the simplest situation concerning $(T^*M,\omega_{can})$, the Witten's twisted de Rham category $\adr{M}$ is related to the Floer theory on $(T^*M,\omega_{can})$ via our main theorem \ref{main_theorem} and Fukaya-Oh's theorem. In more general situation, one expects the correspondence will be one of the ingredients for realizing HMS geometrically. 
\end{remark}


\section{Proof of Main Theorem}\label{proof}
We fix a generic sequence $\vec{f}$ of $k+1$ functions, with corresponding sequence of critical points $\vec{q}$. First of all, we have \[
\deg(\deprod{k}(\phi(q_{(k-1)k}),\dots,\phi(q_{01})) = \sum_{i=0}^{k-1} \deg(q_{i(i+1)})-k+2,\]
so $\langle \deprod{k}(\phi(q_{(k-1)k}),\dots,\phi(q_{01}), \phi(q_{0k}) \rangle$ is non-trivial only when the equality
\begin{equation}\label{degree_eq}
\sum_{i=0}^{k-1} \deg(q_{i(i+1)})-k+2= \deg(q_{0k})
\end{equation}
holds, which is exactly the condition for $\morprod{k}$ in the Morse category to be non-trivial. We will therefore assume condition \eqref{degree_eq} and consider the integral
\begin{equation*}
\int_{M} \langle \deprod{k}(\phi(q_{(k-1)k}),\dots,\phi(q_{01})), \frac{\phi(q_{0k})}{\|\phi(q_{0k})\|^2} \rangle vol_g.
\end{equation*}
Recall that each directed tree $T$ gives an operation $m_k^T(\lam)$ and $\deprod{k} = \sum_{T} \deprodtree{k}{T}$ which is also the case in Morse category. Therefore, we just have to consider each $\deprodtree{k}{T}$ separately.\\


\subsection{Results for a single Morse function}
We start with stating the results of Witten deformation for a single Morse function $f_{ij}$ which we will assume it to be Morse-Smale as in definition \ref{morsesmale}. These results come from \cite{HelSj1, HelSj2, HelSj4, zhang}, with a few modifications to fit our content. We introduce the Agmon distance $\dist_{ij}$ and lemma \ref{lem:agmon_dist_flow_line} is just \cite[Lemma A2.2]{HelSj4}.

\begin{definition}\label{agmondistance}
	For a Morse function $f_{ij}$, the Agmon distance $\dist_{ij}$, or simply denoted by $\dist$ when no confusion occurs, is the distance function with respect to the degenerated Riemannian metric $\langle \cdot, \cdot \rangle_{f_{ij}} = |df_{ij}|^2 \langle \cdot, \cdot \rangle$, where $\langle \cdot, \cdot \rangle$ is the background metric. 
\end{definition}

\begin{lemma}\label{lem:agmon_dist_flow_line}
	We have $\dist_{ij}(x,y) \geq f_{ij}(x) - f_{ij}(y)$ with equality holds if and only if $y$ is connected to $x$ via a generalized flow line $\gamma : [0,1] \rightarrow M$ with $\gamma(0) = y$ and $\gamma(1) = x$. Here a generalized flow line means that $\gamma$ is continuous, and there is a partition $0=t_0 <t_1< \cdots <t_l = 1$ such that $\gamma|_{(t_r,t_{r+1})}$ is a reparameterization of a gradient flow line of $f_{ij}$ and $\gamma(t_r) \in Crit(f_{ij})$ for $0<r<l$. 
\end{lemma}

Readers may see \cite{HelNi} for more of its basic properties. The Agmon distance is closely related to the Witten's Laplacian, or more preciely the corresponding Green's operator associated to it by the following lemma which is a variant of \cite[Proposition 2.2.5]{HelSj2} in our current situation (readers may also see \cite[Proposition 6.5]{DiSj}).

\begin{lemma}\label{resolventestimate}
	Let $\gamma \subset \comp$ to be a subset whose distance from $Spec(\Delta_{ij})$ is bounded below by a constant. For any $j\in \inte_+$ and $\epsilon>0$, there is $k_j \in \inte_+$ and $\lam_0 = \lam_0(\epsilon)>0$ such that for any two points $x_0, y_0 \in M$, there exist neighborhoods $V$ and $U$ (depending on $\epsilon$) of $x_0$ and $y_0$ respectively, and $C_{j,\epsilon}>0$ such that for any $z \in \gamma$ we have
	\begin{equation}
	\| \nabla^j ((z-\Delta_{ij})^{-1}u)\|_{C^{0}(V)} \leq C_{j,\epsilon} e^{-\lam(\dist_{ij}(x_0,y_0)-\epsilon)} \| u\|_{W^{k_j,2}(U)},
	\end{equation}
	for all $\lam>\lam_0$ and $u \in C^0_c(U)$, where $W^{k,p}$ refers to the Sobolev norm.
\end{lemma}

We will also need modified version of the resolvent estimate for $G_{ij}$, which can be obtained by applying the original resolvent estimate to the the formula
\begin{equation}
G_{ij}(u) = \oint_{\gamma} z^{-1}(z-\Delta_{ij})^{-1} u.
\end{equation}

\begin{lemma}\label{resolventlemma}
	For any $j\in \inte_+$ and $\epsilon>0$, there exist $k_j \in \inte_+$ and $\lam_0 = \lam_0(\epsilon)>0$ such that for any two points $x_0, y_0 \in M$, there exist neighborhoods $V$ and $U$ (depending on $\epsilon$) of $x_0$ and $y_0$ respectively, and $C_{j,\epsilon}>0$ such that
	\begin{equation}
	\| \nabla^j (G_{ij}u)\|_{C^{0}(V)} \leq C_{j,\epsilon} e^{-\lam(\dist_{ij}(x_0,y_0)-\epsilon)} \| u\|_{W^{k_j,2}(U)},
	\end{equation}
	for all $\lam<\lam_0$ and $u \in C^0_c(U)$, where $W^{k,p}$ refers to the Sobolev norm.
\end{lemma}

Under the Morse-Smale condition, one can prove the following spectral gap in the twisted de Rham complex which follows from \cite[Lemma 1.6]{HelSj4} and \cite[Proposition 1.7]{HelSj4}. 
\begin{lemma}\label{lem:spectral_gap}
For each $f_{ij}$, there exist $\lam_0>0$ and constants $c, C>0$ such that \[
\Spec(\Delta_{ij})\cap [ce^{-c\lam}, C\lam^{1}) = \emptyset,\]
for $\lam>\lam_0$.
\end{lemma}
Recall that in Section \ref{sec:Witten_twisted_derham} we have denoted the subspace of $\Omega^*_{ij}(M,\lam)$ with eigenvalues lying in $[0, 1)$ by $\Omega^*_{ij}(M,\lam)_{sm}$, and it is closely related to the Morse complex $CM_{ij}^*$ introduced in Section \ref{sec:Morse_category}. 

Furthermore, we have the following theorem on Witten deformation on the level of chain complexes which is \cite[Theorem 6.9]{zhang} in our current situation.
\begin{theorem}[\cite{HelSj4, zhang}]\label{thm:witten_map_iso}
The map $\Phi = \Phi_{ij} : \Omega^*_{ij}(M,\lam)_{sm} \rightarrow CM_{ij}^*$ in equation \eqref{Morse_de Rham_map} is a chain isomorphism for $\lam$ large enough.
\end{theorem}

\begin{notation}
We will denote the inverse by $\phi = \phi_{ij}$ and write $\phi(q) \in \Omega^*_{ij}(M,\lam)_{sm}$ for a critical point $q$ of $f_{ij}$. 
\end{notation}

Since we are dealing with the case that the background metric which is not flat near critical points of $f_{ij}$, we will need a combination of techniques from \cite{HelSj4, zhang} to prove Theorem \ref{thm:witten_map_iso}, which we will briefly indicate as follows. Readers may take this part for granted, skip the following section \ref{sec:witten_map_iso} and go directly into section \ref{sec:property_of_phi_q}. 

\subsubsection{Sketch of proof for Theorem \ref{thm:witten_map_iso} using results from \cite{HelSj4}} \label{sec:witten_map_iso}
We use $Crit^*(f_{ij})$ to denote the set of critical points of $f_{ij}$ with $*$ being the degree of the critical point.  For each $q\in Crit^l(f_{ij})$, we let\[
M_{q,\eta} = M \setminus \bigcup_{ p \in Crit^l(f_{ij})\setminus \{q\}} B(p,\eta),\]
where $B(p,\eta)$ is the open ball centered at $p$ with radius $\eta$ with respect to the Agmon metric, and $M_{q,\eta}$ is a manifold with boundary when $\eta$ is sufficiently small. 

For each $q \in Crit^l(f_{ij})$, we use $\Omega_{ij}^l(M_{q,\eta},\lam)$ to denote the space of differential $l$-forms with Dirichlet boundary condition, with Witten Lacplacian $\Delta_{ij,q}$ acting on it. The spectral gap Lemma \ref{lem:spectral_gap} holds for $\Delta_{ij,q}$ as well and since there is only one critical point of degree $l$ in $M_{q,\eta}$, the eigenspaces of $\Delta_{ij,q}$ with small eigenvalues is $1$-dimensional.  We have the following decay estimate which is \cite[Theorem 1.4]{HelSj4}.
\begin{lemma}\label{lem:eigenestimate1}
For any $\epsilon$, $\eta>0$ small enough, we have $\lam_0=\lam_0(\epsilon,\eta)>0$ such that when $\lam>\lam_0$, $\Delta_{ij,q}$ has one dimensional eigenspace in $[0,1)$. If we let $\varphi_q\in\Omega_{ij}^l(M_{q,\eta},\lam)$ be the corresponding unit length eigenform, we have
\begin{equation}
\varphi_q  = \mathcal{O}_\epsilon(e^{-\lam (\dist_{ij}(q,x)-\epsilon)}),
\end{equation}
where $\mathcal{O}_{\epsilon}$ stands for $C^{0}$ bound with a constant depending on $\epsilon$. Same estimate holds for any $k$-th derivative $\nabla^k\varphi_q$ as well.
\end{lemma}

We construct $\hat{\varphi}_q \in \Omega^*(M,\lam)_{sm}$, depending on $\lam$ and $\eta$ as follows. For each critical point $p$, we take a cut off function $\theta_p$ such that $\theta_p \equiv 1$ in $\overline{B(p,\eta)}$ and compactly supported in $B(p,2\eta)$. Given a critical point $q \in Crit^l(f_{ij})$, we let \[
\chi_q = 1-\sum_{p \in Crit^l(f_{ij}) \setminus \{q\}}\theta_p.\]
\begin{definition}
For sufficiently small $\eta>0$ and large $\lam$, we define
\begin{equation}
\hat{\varphi}_q := P_{ij} \chi_q \varphi_q,
\end{equation}
where $P_{ij} : \Omega^*_{ij}(M,\lam) \rightarrow \Omega^*_{ij}(M,\lam)_{sm}\hookrightarrow \Omega^*_{ij}(M,\lam)$ is the idempotent associated to the projection to the small eigenspace.
\end{definition}

The difference between $\hat{\varphi}_q$ and $\varphi_q$ is computed in \cite[Lemma 2.1.1]{HelSj2}, which shows that $\hat{\varphi}_q$ satisfies the same estimate in Lemma \ref{lem:eigenestimate1}. Furthermore, \cite[Proposition 1.3]{HelSj4} (reader may also see \cite[Theorem 3.6]{DiSj}) together with \cite[Theorem 5.8]{HelSj1} lead the following WKB approximation of $\hat{\varphi}_q$ (see remark \ref{wkbmeaning}).
\begin{lemma}\label{lem:eigenwkb}
	For $\eta$ small enough and $\lam$ large enough, there is a WKB approximation of $\hat{\varphi}_q$ of the form
	\begin{equation}
	\hat{\varphi}_q \sim \lam^{\frac{\deg(q)}{2}} e^{-\lam \dist_{ij}(q,x)}( \alpha_{q,0}+\alpha_{q,2}\lam^{-1}+\dots + \alpha_{q,2j} \lam^{-j} +\dots),
	\end{equation}
	in a neighborhood $W$ of $V^+_q \cup V^-_q$.
\end{lemma}

Lemma \ref{lem:eigenestimate1}, the WKB approximation in the above Lemma \ref{lem:eigenwkb} combines together with the explicit description of the leading term $\alpha_{q,0}$ in \cite[Theorem 2.5]{HelSj4} and it gives us the explicit computation of $\Phi(\hat{\varphi}_q)$ as follows.

\begin{lemma}\label{lem:approximated_inverse_computation}
	For sufficiently small $\eta$ and large $\lam$, we have $\int_{V^-_q} e^{\lam f_{ij}}\hat{\varphi}_q \neq 0$. Suppose that we renormalize $\hat{\phi}_q:= \frac{\hat{\varphi}_q}{(\int_{V^-_q} e^{\lam f_{ij}}\hat{\varphi}_q )}$, then we have
	$$
	\int_{V_p^-} e^{\lam f_{ij}} \hat{\phi}_q = \delta(p,q) - R(p,q),
	$$
	where $R(p,q) = 0$ if $p=q$ and $R(p,q) = \mathcal{O}_\epsilon(e^{-\lam (c(p,q)-\epsilon)})$ with 
	$$
	c(p,q) := \dist_{ij}(p,q) - (f_{ij}(p) - f_{ij}(q)) >0 
	$$
	from the Morse-Smale condition.
	\end{lemma}

In particular, if we define $\hat{\phi}:CM^*_{ij} \rightarrow \Omega^*(M,\lam)_{sm}$ by $q \mapsto \hat{\phi}_q$, then we have $\Phi \circ \hat{\phi} = id - R$ with $R = \mathcal{O}(e^{-c\lam})$ for some $c>0$. This tells us that $\Phi$ is an isomorphism when is $\lam$ large enough and $\hat{\phi}$ is an approximation of $\phi$.

\subsubsection{Exponential decay of $\phi(q)$} \label{sec:property_of_phi_q}
For a critical point $q\in Crit^*(f_{ij})$, $\phi(q) \in \Omega^*(M,\lam)_{sm}$ has certain exponential decay measured by the Agmon distance from the critical point $q$ as in lemma \ref{eigenestimate2}. It is also a consequence of lemma \ref{lem:eigenestimate1} and lemma \ref{lem:approximated_inverse_computation}. 

\begin{lemma}\label{eigenestimate2}
For any $\epsilon$, there exists $\lam_0=\lam_0(\epsilon)>0$ such that for $\lam>\lam_0$, we have
\begin{equation}
\phi(q) = \mathcal{O}_{\epsilon}(e^{-\lam(\psi_q(x)-\epsilon)}),
\end{equation}
and the same estimate holds for the derivatives of $\phi_{ij}(q)$. Here, $\mathcal{O}_\epsilon$ refers to the dependence of the constant $\epsilon$ and $\psi_q(x) = \dist_{ij}(q,x)+f_{ij}(q)$.
\end{lemma}

\begin{remark}\label{eigenwkbremark}
We write $g^+_{q} = \psi_{q} -f_{ij}$ and $g^-_{q} = \psi_{q} + f_{ij}$ which are nonnegative smooth functions with zero sets $V_{q}^+$ and $V_{q}^-$ respectively, and Bott-Morse in a neighborhood $W$ of $V_{q}^+ \cup V_q^-$.
More properties of the functions $g^{\pm}_q$ can be found right below \cite[equation (2.8)]{HelSj4}

In this case, we write 
\begin{eqnarray*}
e^{\lam f_{ij}}\phi(q) & = &\mathcal{O}_{\epsilon}(e^{-\lam(g^+_{q}-\epsilon)}),\\
e^{-\lam f_{ij}}\ast \phi(q)/\|\phi(q)\|^2 & = &  \mathcal{O}_{\epsilon}(e^{-\lam(g^-_{q}-\epsilon)}).\\
\end{eqnarray*}
\end{remark}

Furthermore, we notice that the normalized basis $\phi(q)/\|\phi(q)\|$'s are almost orthonormal basis as in the following lemma, which is a direct consequence of lemma \ref{eigenestimate2}.
\begin{lemma}\label{lem:orthonormal_basis}
There exist $C,c>0$ and $\lam_0$ such that when $\lam > \lam_0$ such that
$$
\langle \frac{\phi(p)}{\|\phi(p)\|} , \frac{\phi(q)}{\|\phi(q)\|} \rangle = \delta_{pq} + Ce^{-c\lam}.
$$
\end{lemma}

\subsubsection{WKB approximation for $\phi(q)$}\label{wkbeigenform}
Restricting on a sufficiently small neighborhood $W$ containing $V^+_{q} \cup V^-_{q}$, the above decay estimate of $\phi(q)$'s from \cite{HelSj4} can be improved from an error of order $\mathcal{O}_\epsilon(e^{\epsilon \lam})$ to $\mathcal{O}(\lam^{-N})$ for an arbitrary $N \in \inte_+$ which follows from a similar WKB approximation in lemma \ref{lem:eigenwkb}. 
\begin{lemma}\label{eigenwkb}
There is a WKB approximation of $\phi(q)$ of the form
\begin{equation}
\phi(q) \sim \lam^{\frac{\deg(q)}{2}} e^{-\lam \psi_{q}}( \omega_{q,0}+\omega_{q,2}\lam^{-1}+\dots + \omega_{q,2j} \lam^{-j} +\dots),
\end{equation}
in a neighborhood $W$ of $V^+_q \cup V^-_q$. 
\end{lemma}

\begin{remark}\label{wkbmeaning}
	The precise meaning of this WKB approximation is given in section \ref{L2approx}. Roughly speaking, it is a $C^\infty$ approximation in order of $\lam$ on every compact subset of $W$.
\end{remark}

Furthermore, the integral of the leading order term $\omega_{q,0}$ in the normal direction to the stable submanifold $V_q^+$ is computed in \cite[Theorem 2.5]{HelSj4}. 
\begin{lemma}\label{eigenwkbcal}
Fixing any point $x \in V_{q}^+$ and a cutoff function $\chi$ such that $\chi \equiv 1$ around $x$ compactly supported in $W$, we take any closed submanifold (possibly with boundary) $NV_{q,x}^+$ of $W$ intersecting transversally with $V_q^+$ at $x$. Then, we have
\begin{equation*}
\lam^{\frac{\deg(q)}{2}}\int_{NV_{q,x}^+} e^{-\lam g^+_{q}}\chi \omega_{q,0} = 1+\mathcal{O}(\lam^{-1}).
\end{equation*}
Similarily, we have
\begin{equation*}
\frac{\lam^{\frac{\deg(q)}{2}}}{\| \phi(q) \|^2}\int_{NV_{q,x}^-} e^{-\lam g^-_{q}}\chi (\ast \omega_{q,0}) = 1+\mathcal{O}(\lam^{-1}), 
\end{equation*}
for any point $x \in V_{q}^-$, with $NV_{q,x}^-$ intersecting transversally with $V_q^-$ at $x$.
\end{lemma}

So far we have been considering a fixed Morse function $f_{ij}$. From now on, we will consider a fixed generic sequence $\vec{f}$ with corresponding sequence of critical points $\vec{q}$ as in the beginning of section \ref{proof}.
\begin{notation}
We use $q_{ij}$ to denote a fixed critical point of $f_{ij}$. $\phi(q_{ij})$ associated to $q_{ij}$ is abbreviated by $\phi_{ij}$.
\end{notation}

We will use the result in the previous section to localize the integral
\begin{equation}\label{mkintegral0}
\int_{M}  \deprod{k}(\phi(q_{(k-1)k}),\dots,\phi(q_{01}))\wedge \frac{\ast \phi(q_{0k})}{\|\phi(q_{0k})\|^2} 
\end{equation}
to gradient flow trees, when the degree condition (\ref{degree_eq}) holds. 

\subsection{Proof of $m_2$}\label{sec:m_2_case}

\subsubsection{Apriori estimate for $m_2(\lam)$ case}\label{sec:apriori_m2_case}
We begin with the simplest case $m_2(\lam)$ which does not involve any homotopy operator $H_{ij}$. There is an unique $2$-tree $T$ with a unique vertex $v_r$ as shown in Figure \ref{2tree}. According to the combinatorics of $T$, we define $\vec{\dist}_{T} : M = M^{| V(T)|} \rightarrow \real_+$ which is given by
\begin{equation*}
	\vec{\dist}_{T}(x_{v_r}) =\dist_{01}(x_{v_r},q_{01})+\dist_{12}(x_{v_r},q_{12})+\dist_{02}(x_{v_r},q_{02}).
\end{equation*}
It can be treated as the length of the geodesic tree of type $T$ with unique interior vertices $x_{v_r}$ and end points of semi-infinite edges $e_{ij}$'s laying on $q_{ij}$'s.

By lemma \ref{lem:agmon_dist_flow_line}, we learn that $\dist_{ij}(x,y) \geq f_{ij}(x) - f_{ij}(y)$ and equality holds if and only if $y$ is connected to $x$ through a generalized flow line of $f_{ij}$. Notice that $\vec{\dist}_{T}(x_{v_r}) \geq A$ where
\begin{equation}\label{eqn:sym_area_constant_m2}
A:= f_{02}(q_{02}) - f_{01}(q_{01}) - f_{12}(q_{12}),
\end{equation} and the equality holds if and only if $x_{v_r}$ is one of the interior vertices of a gradient flow tree of the type $T$. We will only consider gradient flow trees instead of \footnote{Here generalized gradient trees refers to continuous map from $T$ to $M$ such that the restriction to each edge being a generalized gradient flow line mentioned in lemma \ref{lem:agmon_dist_flow_line}}generalized gradient trees since we assume the sequence of Morse functions $\vec{f}$ satisfies the generic assumption as in definition \ref{genericassumption}.

From lemma \ref{eigenestimate2}, we notice the integrand 
\begin{equation}
\int_{M}  m_2^{T}(\phi_{12}, \phi_{01})\wedge \frac{\ast \phi_{02}}{\|\phi_{02}\|^2}   = \int_{M}  \phi_{12} \wedge \phi_{01} \wedge \frac{\ast \phi_{02}}{\|\phi_{02}\|^2},
\end{equation}
can be controlled by $e^{-\lam (\dist_{T}(x_{v_r}) - A)}$ in the following sense. 

Fixing $x_{v_r}\in M$ and sufficiently small $\epsilon>0$, we apply cutoff function $\chi_r$ supported in $B(x_{v_r},r_1)$ and obtain
\begin{equation*}
\|\chi_r \wedge \phi_{12} \wedge \phi_{01} \wedge \frac{\ast \phi_{02}}{\|\phi_{02}\|^2}\|_{L^{\infty}(M)}
\leq  C_{\epsilon} e^{-\lam (\vec{\dist}_{T}(x_{v_r})-A - 3r_1 - 3\epsilon)}
\end{equation*}
Here the decay factors $\psi_{q_{01}}(x_v) = \dist_{01}(x_{v_r},q_{01}) + f_{01}(q_{01})$, $\psi_{q_{12}}(x_v) = \dist_{12}(x_{v_r},q_{12}) + f_{12}(q_{12})$ and $\psi_{q_{02}}(x_v)-2f_{02}(q_{02}) = \dist_{02}(x_{v_r},q_{02}) - f_{02}(q_{02})$ come from the a priori estimate in lemma \ref{eigenestimate2} for the input forms $\phi_{01}$,  $\phi_{12}$ and $\frac{\ast \phi_{02}}{\|\phi_{02}\|^2}$ respectively. 

We assume there are gradient trees $\Gamma_1 ,\dots, \Gamma_l$ of the type $T$. For each tree $\Gamma_i$, we take open neighborhoods $D_{\Gamma_i,v_r}$ and $W_{\Gamma_i,v_r}$ of interiors vertices $x_{\Gamma_i,v_r}$ with $\overline{D_{\Gamma_i,v_r}} \subset W_{\Gamma_i,v_r} $ as shown in following Figure \ref{figure:m_2cutoff}. 

\begin{figure}[h]
	\centering
	\includegraphics[scale=0.3]{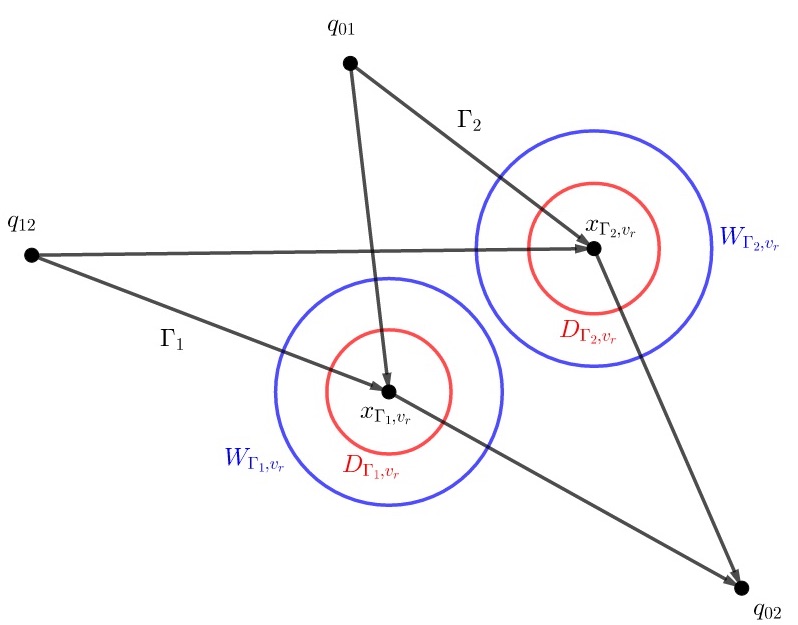}
	\caption{Cut off of integral near gradient trees of type $T$}
	\label{figure:m_2cutoff}
\end{figure}

Since $\vec{\dist}_{T}(x_{v_r})$ is a continuous function in $x_{v_r}$ attending minimum value $A$ exactly at internal vertices $x_{\Gamma_i,v_r}$ of gradient trees $\Gamma_i$'s, we have a constant $C>0$, depends on the size of the neighborhood $D_{\Gamma_i,v_r}$'s, such that $\vec{\dist}_{T} \geq A +C $ in $M  \setminus \cup_i D_{\Gamma_i,v_r}$ by continuity from the discussion above equation \eqref{eqn:sym_area_constant}.

If $B(x_{v_r},r_1)$ is away from the $D_{\Gamma_i,v_r}$'s, we have 
\begin{equation*}
\|\chi_r \wedge \phi_{12} \wedge \phi_{01} \wedge \frac{\ast \phi_{02}}{\|\phi_{02}\|^2}\|_{L^{\infty}(M)}
\leq  C_{\epsilon} e^{-\lam (\frac{C}{2})},
\end{equation*}
and thus contributes exponentially small error terms.

To obtain the leading order term contribution, we take cutoff functions $\chi_{\Gamma_i,v}$,  $\chi_{\Gamma_i,v_r}$ associating to each tree $\Gamma_i$, with supports in $W_{\Gamma_i, v_r}$ and equal to $1$ on $\overline{D_{\Gamma_i, v_r}}$, and get

\begin{align*}
	& \int_{M}  m_2^{T}(\phi_{12}, \phi_{01})\wedge \frac{\ast \phi_{02}}{\|\phi_{02}\|^2} \\
	 = & \sum_i \int_{M}\lbrace \chi_{\Gamma_i,v_r}  \phi_{12} \wedge \phi_{01} \wedge \frac{\ast \phi_{02}}{\|\phi_{02}\|^2}\rbrace + \mathcal{O}(e^{-\lam(\frac{C}{2})}).
\end{align*}
This localizes the integral computing $m_2^{T}$ to gradient trees $\Gamma_i$'s of type $T$. Notice that the neighborhoods $D_{\Gamma_i}$ and $W_{\Gamma_i}$ can be chosen to be arbitrarily small. 

\subsubsection{WKB methods for $m_2$}\label{sec:m_2_WKB}

In this section, we introduce the WKB method which allows us to compute the leading order contribution in $m_2^{T}$ explicitly. We fix a gradient tree $\Gamma$ as in the section \ref{m3estimate}, with interior vertices $x_{v_r}:=x_{\Gamma,v_r}$ (since the gradient tree $\Gamma$ is fixed, we omit the dependence on $\Gamma$ in our notations). We take neighborhoods $W_{v_r}$ of $x_{v_r}$, with cutoff functions $\chi_{v_r}$ supported in $W_{v_r}$ as in section \ref{sec:apriori_m2_case}.

As $x_{v_r} \in  V_{q_{12}}^+ \cap V_{q_{01}}^+ \cap V_{q_{02}}^-$, we can assume that the WKB approximations from lemma \ref{eigenwkb}
\begin{equation*}
\phi_{ij} \sim \lam^{\frac{\deg(q_{ij})}{2}} e^{-\lam \psi_{ij}}( \omega_{ij,0}+\omega_{ij,1}\lam^{-1/2}+\dots),
\end{equation*}
hold in $W_{v_r}$ for $ij = 01, 12, 02$ (by lemma \ref{eigenwkb}, for any $ij = 01, 12, 02$, we have $\omega_{ij,k} = 0$ when $k$ is odd, but we still insist to write the expansions in the above form to unify our notations in the rest of the proof), by taking a smaller $W_{v_r}$ if necessary while using the lemma \ref{eigenestimate2}.

Computing the integral by using the WKB expansions, we have
\begin{multline}\label{eqn:wkbm2}
\int_{M}\lbrace \chi_{v_r}  \phi_{12} \wedge \phi_{01} \wedge \frac{\ast \phi_{02}}{\|\phi_{02}\|^2}\rbrace\\
= \lam^{\frac{deg(q_{12}) +\deg(q_{01})-\deg(q_{02})-1}{2}}\int_{M}  \lbrace\chi_{v_r} (e^{-\lam\psi_{12}}\omega_{12,0}) \wedge  (e^{-\lam\psi_{01}}\omega_{01,0}) \wedge \frac{e^{-\lam \psi_{02}} \ast \omega_{02,0}}{\|\phi_{02}\|^2})\rbrace \\
= \frac{1}{\| \phi_{02} \|^2}\int_M \lbrace \chi_{v_r} (e^{-\lam (\psi_{12}+\psi_{01}+\psi_{02})}\omega_{12,0}\wedge \omega_{01,0}\wedge (\ast \omega_{02,0}))
\end{multline}
modulo terms of order $\mathcal{O}(\lam^{-1})$. We observe that the exponential decay factor of the integrand is $e^{-\lam(\psi_{12}+\psi_{01}+\psi_{02})} = e^{-\lam(g^{+}_{12}+g_{01}^+ + g_{02}^-)}$, where $g^\pm_{ij}$ are introduced in remark \ref{eigenwkbremark}. 

Recall that $g^+_{01}$, $g^+_{12}$ and $g_{02}^-$ are Bott-Morse with absolute minimums on $V^+_{01}$, $V_{12}^+$ and $V_{02}^-$ respectively. The generic assumption (definition \ref{genericassumption}) of the sequence $\vec{f}$ indicates that $V_{12}^+$, $V^+_{01}$ and $V^-_{02}$ intersect transversally at $x_{v_r}$ which means $e^{-\lam (g^{+}_{12}+g_{01}^+ + g_{02}^-)}$ concentrates at $x_{v_r}$. The leading order contribution will be computed in the up coming section. 

\subsubsection{Explicit computations for $m_2$}\label{sec:m2_computation}

We will need the following lemma which will be proven in section \ref{leadingrelation}.

\begin{lemma}\label{stat_phase_exp_NB_1}
	Let $M$ be a $n$-dimensional manifold and $S$ be a $k$-dimensional submanifold in $M$, with a neighborhood $B$ of $S$ which can be identified as the normal bundle $\pi: NS \rightarrow S$. Suppose $\varphi: B \rightarrow\real_{\geq0}$ is a Bott-Morse function with zero set $S$ and $\beta \in\Omega^*(B)$ has a vertically compact support along the fiber of $\pi$, we have\[
	\pi_*(e^{-\lam \varphi(x)}\beta) = (\frac{\lam}{2\pi})^{(n-k)/2} (\iota_{\vol(\Hess\varphi)}\beta)|_V (1+\order(\lam^{-1})),\]
	where $\pi_*$ is the integration along fiber and $\vol(\Hess \varphi)$ is the volume polyvector field defined for the positive symmetric tensor $\Hess \varphi$ along fibers of $\pi$.
\end{lemma}

From lemma \ref{stat_phase_exp_NB_1}, we know that the leading order contribution in the above integral \eqref{eqn:wkbm2} depends only on values of $\omega_{12,0}$, $\omega_{01,0}$ and $\ast \omega_{02,0}$ at the point $x_{v_r}$. We use the normal bundle $NV_{12}^+ \oplus NV_{01}^+ \oplus NV_{02}^-$ at $x_{v_r}$ to parametrize a neighborhood of $x_{v_r}$. Making use of lemma \ref{stat_phase_exp_NB_1}, we can split the integral as follows for computing leading order contribution. We have
\begin{eqnarray*}
	&& \int_M  \chi_{v_r} e^{-\lam (g^{+}_{12}+g^+_{01}+g^-_{02})}\omega_{12,0}\wedge \omega_{01,0}\wedge (\ast \omega_{02,0}) \\
	&=&\pm (\int_{NV^+_{12,x_{v_r}}} e^{-\lam g^{+}_{12}} \chi_{v_r}\omega_{12,0}) (\int_{NV^+_{01,x_{v_r}}} e^{-\lam g_{01}^+} \chi_{v_r}\omega_{01,0})\\
	&&(\int_{NV_{02,x_{v_r}}^-} e^{-\lam g^-_{02}} \chi_{v_r}(\ast \omega_{02,0}))( 1+\mathcal{O}(\lam^{-1})),
\end{eqnarray*}

\noindent where the sign depends on whether the orientations of $ NV_{12}^+ \oplus NV_{01}^+ \oplus NV_{02}^-$ and $TM$ match or not at the point $x_{v_r}$. From lemma \ref{eigenwkbcal}, we obtain equality 
$$\lam^{\frac{\deg(q_{ij})}{2}}\int_{NV^+_{ij,x_{v_r}}} e^{-\lam g_{ij}^+}  \chi_{v_r}\omega_{ij,0} = 1+\mathcal{O}(\lam^{-1}),$$
for $ij = 01,12$ and 
$$
\frac{\lam^{\frac{\deg(q_{02})}{2}}}{\| \phi_{02} \|^2}(\int_{NV_{02,x_{v_r}}^-} e^{-\lam g^-_{02}} \chi_{v_r}(\ast \omega_{02,0}))= 1+\mathcal{O}(\lam^{-1})
$$
from the lemma \ref{eigenwkbcal}. Therefore we conclude that
\begin{equation*}
\int_{M}\lbrace \chi_{v_r}  \phi_{12} \wedge \phi_{01} \wedge \frac{\ast \phi_{02}}{\|\phi_{02}\|^2}\rbrace = \pm (1+\mathcal{O}(\lam^{-1})),
\end{equation*} 
where the sign depends on matching the orientations of $ NV_{12}^+ \oplus NV_{01}^+ \oplus NV_{02}^-$ and $TM$ at the point $x_{v_r}$. 

\begin{remark}
	Notice that we have a stronger estimate with the error term being $\mathcal{O}(\lam^{-1})$ instead of $\mathcal{O}(\lam^{-1/2})$ since the estimate of the homotopy operator (see lemma \ref{homotopywkb}) is not involved in the $m_2$ case.
\end{remark}

\subsection{Proof of $m_3$}\label{sec:m_3_case}

Next we consider the $m_3(\lam)$ case to illustrate the analytic argument needed for handling the homotopy operator $H_{ij}$. 

\subsubsection{Apriori estimate for $m_3(\lam)$ case}\label{aprioriestimate}\label{m3estimate}

There are two $3$-leafed directed trees, which are denoted by $T_1$ and $T_2$. We simply consider $m_3^{T_1}(\lam)$ where $T_1$ is the tree shown in figure \ref{3trees} and relate this operation to counting gradient trees of type $T_1$. $T_1$ has two interior vertices $v$ and $v_r$. According to the combinatorics of $T_1$, we define $\vec{\dist}_{T_1} : M^{| V(T_1)|} \rightarrow \real_+$ by
\begin{eqnarray*}
&&\vec{\dist}_{T_1}(x_v,x_{v_r}) \\
 &=&\dist_{13}(x_v,x_{v_r})
+\dist_{01}(x_{v_r},q_{01})+\dist_{12}(x_v,q_{12})+\dist_{23}(x_v,q_{23}) +\dist_{03}(x_{v_r},q_{03}).
\end{eqnarray*}
It is the length of the geodesic tree of type $T_1$ with interior vertices $x_v,x_{v_r}$ and endpoints of semi-infinite edges $e_{ij}$'s laying on $q_{ij}$'s as shown in the following figure.

\begin{figure}[h]
\centering
\includegraphics[scale=0.2]{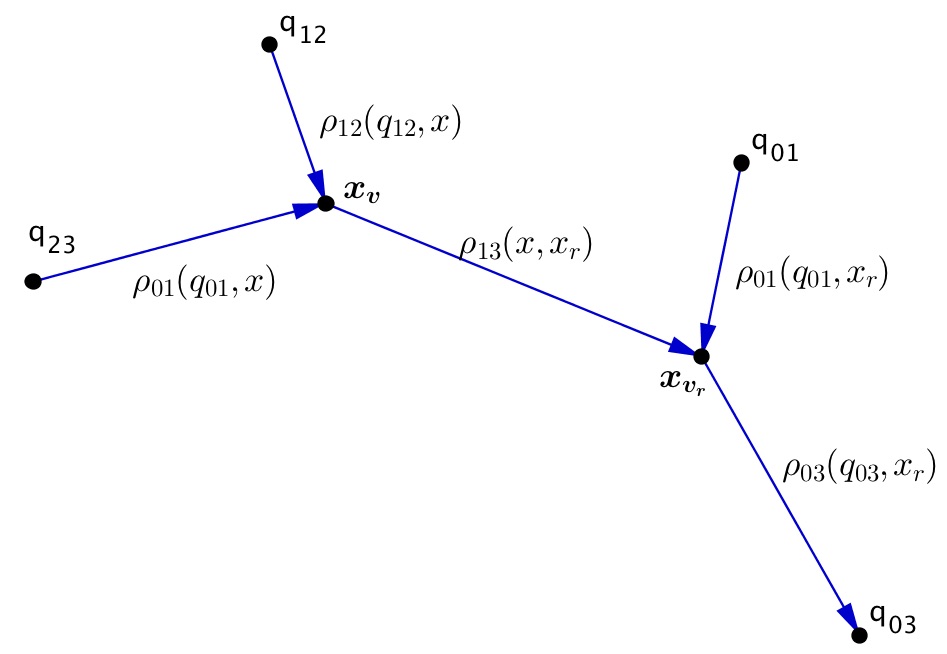}
\end{figure}

Similar to the proof of $m_2(\lam)$ case in section \ref{sec:m_2_case}, we notice that $\vec{\dist}_{T_1}(x_v,x_{v_r}) \geq A$ where
\begin{equation}\label{eqn:sym_area_constant}
A:= f_{03}(q_{03}) - f_{01}(q_{01}) - f_{12}(q_{12}) - f_{23} (q_{23}),
\end{equation} and the equality holds if and only if $(x_v,x_{v_r})$ are interior vertices of a gradient flow tree of the type $T_1$. Once again we only have gradient flow trees instead of generalized gradient trees since we assume the sequence of Morse function $\vec{f}$ satisfyies the generic assumption (see definition \ref{genericassumption}).\\


We apply lemma \ref{eigenestimate2} and lemma \ref{resolventlemma} to conclude the integrand of
\begin{equation}
\int_{M}  m_3^{T_1}(\phi_{23}, \phi_{12}, \phi_{01})\wedge \frac{\ast \phi_{03}}{\|\phi_{03}\|^2}   = \int_{M}  H_{13}(\phi_{23}\wedge \phi_{12}) \wedge \phi_{01} \wedge \frac{\ast \phi_{03}}{\|\phi_{03}\|^2},
\end{equation}
is controlled by $e^{-\lam (\dist_{T_1} - A)}$ as follows. 

Fixing two points $x_v,x_{v_r} \in M$ and sufficiently small $\epsilon>0$ such that estimate for $G_{13}$ as well as $H_{13}$ in lemma \ref{resolventlemma} holds for some balls $U=B(x_v, r_1)$ and $V=B(x_{v_r}, r_1)$ (with respect to $\dist_{13}$). 
If $\chi$ and $\chi_r$ are cutoff functions supported in $B(x_v,r_1)$ and $B(x_{v_r},r_1)$ respectively, then we have
\begin{equation*}
\| \chi_r H_{13}(\chi \phi_{23}\wedge \phi_{12})\|_{L^{\infty}} \leq C_{\epsilon} e^{-\lam (\psi_{q_{23}}(x_v) + \psi_{q_{12}}(x_v) + \rho_{13}(x_v,x_{v_r})-2r_1-3\epsilon)}
\end{equation*}
for those large enough $\lam$, where lemma \ref{eigenestimate2} gives the decay factors $\psi_{q_{23}}(x_v)$ and $\psi_{q_{12}}(x_v)$ of the input forms $\phi_{23}$ and $\phi_{12}$ respectively, and lemma \ref{resolventlemma} gives the decay factor $\rho_{13}(x_v,x_{v_r})$. Combining with the decay estimates for $\phi_{01}$ and $\frac{\ast \phi_{03}}{\|\phi_{03}\|^2}$ as in section \ref{sec:m_2_case}, we obtain
\begin{equation*}
\|\chi_r H_{13}(\chi \phi_{23}\wedge \phi_{12}) \wedge \phi_{01} \wedge \frac{\ast \phi_{03}}{\|\phi_{03}\|^2}\|_{L^{\infty}(M)}
\leq  C_{\epsilon} e^{-\lam (\vec{\dist}_{T_1}(x_v,x_{v_r})-A - 4r_1 - 5 \epsilon)}
\end{equation*}
where $x_v,x_{v_r}$ are the centers of balls chosen for taking the cutoff functions $\chi,\chi_r$ as above and $A$ is defined in equation \eqref{eqn:sym_area_constant}.

Once again we assume there are gradient trees $\Gamma_1 ,\dots \Gamma_l$ of the type $T_1$. For each tree $\Gamma_i$, we take open neighborhoods $D_{\Gamma_i,v}$ and $W_{\Gamma_i,v}$ of interiors vertices $x_{\Gamma_i,v}$ with $\overline{D_{\Gamma_i,v}} \subset W_{\Gamma_i,v} $, and similarly $D_{\Gamma_i , v_r}$ and $W_{\Gamma_i,v_r}$ for $x_{\Gamma_i,v_r}$, as illustrated in figure \ref{figure:cutoff}.

\begin{figure}[h]
\centering
\includegraphics[scale=0.22]{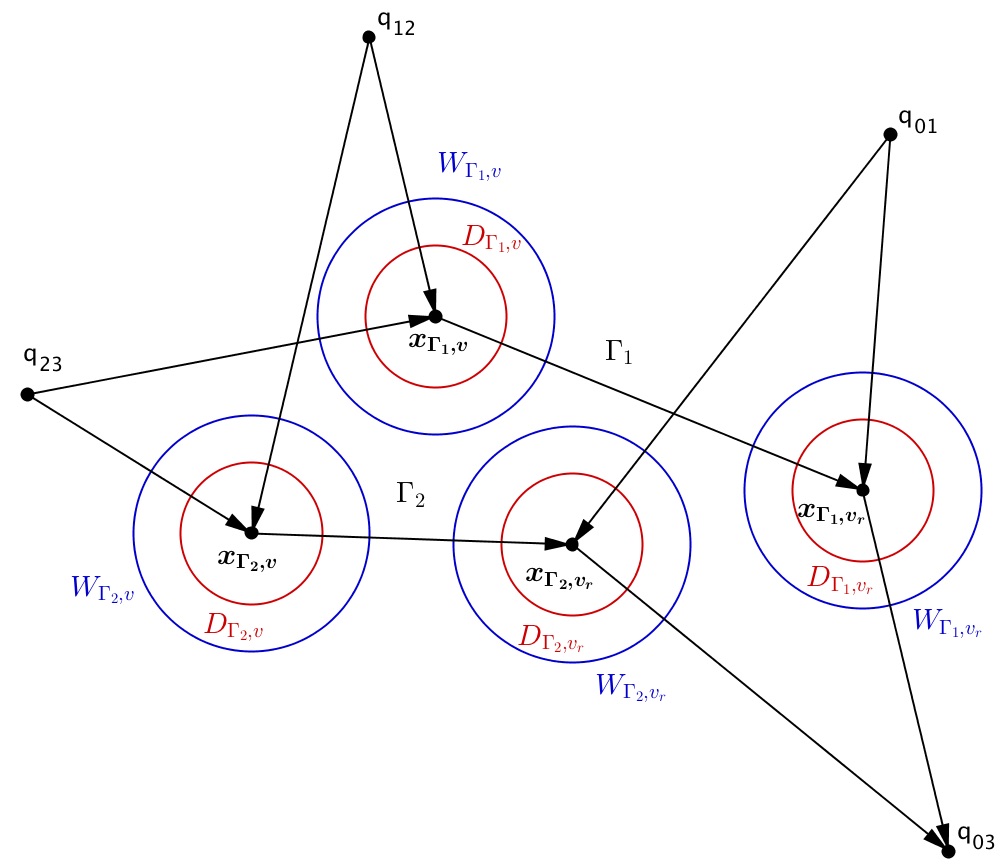}
\caption{Cutoff of integral near gradient trees of type $T_1$}
\label{figure:cutoff}
\end{figure}

Since $\vec{\dist}_{T_1}$ is a continuous function and it attends minimum value $A$ exactly when $(x_v,x_{v_r})=(x_{\Gamma_i,v},x_{\Gamma_i,v_r})$ for some gradient tree $\Gamma_i$, there is a constant $C>0$ (again depending on the size of the neighborhood $D_{\Gamma_i}$'s) such that $\vec{\dist}_{T_1} \geq A +C $ in $M^{|V(T_1)| } \setminus \cup_i D_{\Gamma_i}$ by continuity from the discussion at the beginning of section \ref{m3estimate}, where $D_{\Gamma_i } = D_{\Gamma_i,v} \times D_{\Gamma_i,v_r}$.

If $\vec{B}(\vec{x},r_1) = B(x_v,r_1) \times B(x_{v_r},r_1)$ is away from the $D_{\Gamma_i}$'s, we have 
\begin{equation*}
\|\chi_r H_{13}(\chi \phi_{23}\wedge \phi_{12}) \wedge \phi_{01} \wedge \frac{\ast \phi_{03}}{\|\phi_{03}\|^2}\|_{L^{\infty}(M)}
\leq  C_{\epsilon} e^{-\lam (\frac{C}{2})}.
\end{equation*}

Therefore we can take cutoff functions $\chi_{\Gamma_i,v}$,  $\chi_{\Gamma_i,v_r}$ associating to each tree $\Gamma_i$, with supports in $W_{\Gamma_i, v}$, $W_{\Gamma_i, v_r}$ and equal to $1$ on $\overline{D_{\Gamma_i, v}}$, $\overline{D_{\Gamma_i, v_r}}$ respectively, and obtain

\begin{eqnarray*}
&& \int_{M}  m_3^{T_1}(\phi_{23}, \phi_{12}, \phi_{01})\wedge \frac{\ast \phi_{03}}{\|\phi_{03}\|^2}\\
& = & \sum_i \int_{M}\lbrace \chi_{\Gamma_i,v_r} H_{13}(\chi_{\Gamma_i,v} \phi_{23}\wedge \phi_{12}) \wedge \phi_{01} \wedge \frac{\ast \phi_{03}}{\|\phi_{03}\|^2}\rbrace + \mathcal{O}(e^{-\lam(\frac{C}{2})}).\\
\end{eqnarray*}
This localizes the integral computing $m_3^{T_1}$ to gradient trees of type $T_1$ where the neighborhoods $D_{\Gamma_i}$ and $W_{\Gamma_i}$ can be chosen to be arbitrarily small. 

\subsubsection{WKB method for $m_3$}

Similar to the previous section \ref{sec:m_2_WKB}, we only focus on a gradient tree $\Gamma$ of type $T_1$ as in the section \ref{m3estimate}, with interior vertices $x_{\Gamma,v}$ and $x_{\Gamma,v_r}$. Once again, we omit the dependence on $\Gamma$ to simplify our notations. We take neighborhoods $W_{v}$ and $W_{v_r}$ of $x_{v}$ and $x_{v_r}$ respectively, and $\chi_{v}$ and $\chi_{v_r}$ are cutoff functions supported in $W_{v}$ and $W_{v_r}$ respectively as shown in the following figure.

\begin{figure}[h]
\centering
\includegraphics[scale=0.2]{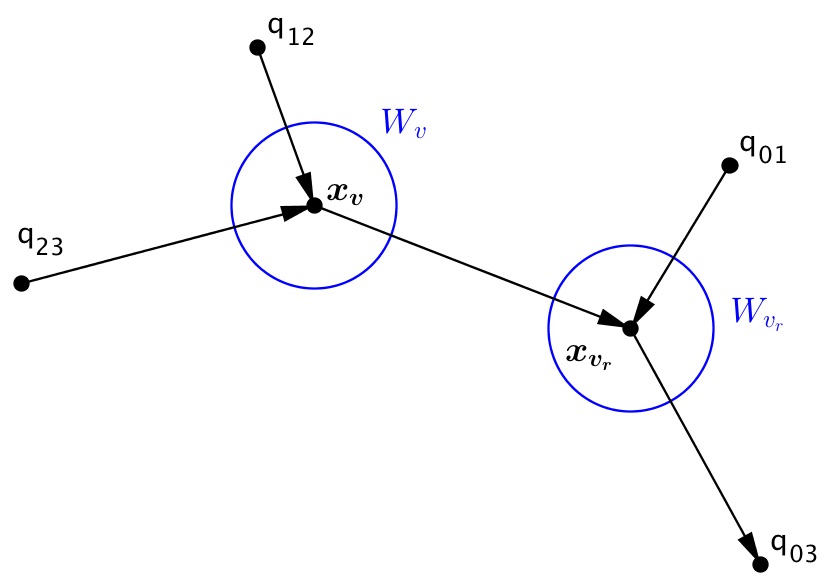}
\end{figure}

As $x_{v} \in V_{q_{12}}^+ \cap V_{q_{23}}^+$, we can assume that the WKB approximations from lemma \ref{eigenwkb}
\begin{equation*}
\phi_{12} \sim \lam^{\frac{\deg(q_{12})}{2}} e^{-\lam \psi_{12}}( \omega_{12,0}+\omega_{12,1}\lam^{-1/2}+\dots),
\end{equation*}
and
\begin{equation*}
\phi_{23} \sim \lam^{\frac{\deg(q_{23})}{2}} e^{-\lam\psi_{23}}( \omega_{23,0}+\omega_{23,1}\lam^{-1/2}+\dots)
\end{equation*}
hold in $W_{v}$ (indeed $\omega_{12,k} = 0$ and $\omega_{23,k} = 0$ when $k$ is odd), by taking a smaller $W_{v}$ if necessary while using the lemma \ref{eigenestimate2}. Then, we need a similar WKB approximation for the term
$$
H_{13} (\chi_{v}\phi_{23} \wedge \phi_{12}),
$$
in the neighborhood $W_{v_r}$. Here we state a WKB lemma for the homotopy operators $H_{ij}$ which appear in the higher products $\deprod{k}$ for $k\geq 3$. The proof will occupy the whole section \ref{approximation}.\\

\paragraph{\textit{WKB for homotopy operator}}\label{wkbhomotopy}

Let $\gamma(t)$ be a flow line of $\nabla f_{ij}/ | \nabla f_{ij}|_{\rho_{ij}}$ starts at $\gamma(0)=x_{S}$ and ends at $\gamma(T) = x_{E}$ for a fixed $T>0$. We consider an input form $\zeta_S$ defined in a neighborhood $W_S$ of $x_{S}$. Suppose we are given a WKB approximation of $\zeta_S$ in $W_S$, which is an approximation of $\zeta_S$ according to order of $\lam$ of the form
\begin{equation}
\zeta_S \sim e^{-\lam \psi_{\scalebox{.7}{$\scriptscriptstyle S$}}}(\omega_{S,0}+\omega_{S,1} \lam^{-1/2}+ \omega_{S,2} \lam^{-1} + \dots)
\end{equation}
(The precise meaning of this infinite series approximation can be found in section \ref{L2approx}). We further assume that $g_S  = \psi_S - f_{ij}$ is a nonnegative Bott-Morse function in $W_S$ with zero set $V_S$. We consider the equation
\begin{equation}\label{homotopyeq}
\Delta_{ij} \zeta_E = (I-P_{ij})d_{ij}^*(\chi_S \zeta_S),
\end{equation}
where $\chi_S$ is a cutoff function compactly supported in $W_S$, $P_{ij}:\Omega_{ij}^*(M,\lam)\rightarrow\Omega_{ij}^*(M,\lam)_{sm}$ is the projection. We want to have a WKB approximation of the solution $\zeta_E = H_{ij}(\chi_S \zeta_S)$ to the equation \eqref{homotopyeq}.
\begin{lemma}[=Theorem \ref{thm:homotopy_wkb}]\label{homotopywkb}
If $\supp(\chi_S)$ is small enough, there is a WKB approximation of $\zeta_E$ in a small enough neighborhood $W_E$ of $x_E$, of the form
\begin{equation}
\zeta_E \sim e^{-\lam \psi_{\scalebox{.7}{$\scriptscriptstyle E$}}}\lam^{-1/2}(\omega_{E,0}+\omega_{E,1}\lam^{-1/2}+\dots).
\end{equation}
Furthermore, $g_E:=\psi_E-f_{ij}$ is a nonnegative  Bott-Morse function in $W_E$ with zero set $V_E =( \bigcup_{-\infty<t<+\infty} \sigma_t(V_S)) \cap W_E$ which is closed in $W_E$, where $\sigma_t$ is the time $t$ flow of $\nabla f_{ij}/ | \nabla f_{ij}|^2$ (normalized according to $|df_{ij}|^2\langle\cdot,\cdot\rangle$).

\end{lemma}

\paragraph{\textit{WKB for} $m_3$ (\textit{cont'd})} 

We apply lemma \ref{homotopywkb} with Morse function $f_{13}$, input form $\zeta_S = \phi_{23}\wedge \phi_{12}$, starting vertex $x_S = x_{v}$, ending vertex $x_E = x_{v_r}$, with neighborhood $W_S = W_{v}$ and $W_E = W_{v_r}$ (This can be done by shrinking $W_{v}$ and $W_{v_r}$ if necessary). As a result, we obtain the WKB approximation 
\begin{equation*}
H_{13} (\chi_{v}\phi_{23} \wedge \phi_{12}) \sim \lam^{\frac{\deg(q_{23})+\deg(q_{12})-1}{2}} e^{-\lam\psi_{13}}(\omega_{13,0}+\omega_{13,1}\lam^{-1/2}+\dots),
\end{equation*}
by taking $\psi_{E} = \psi_{13}$ and $\omega_{E,i} = \omega_{13,i}$ in the lemma.\\

In order to compute
$$
\int_{M}  m_3^{T_1}(\lam,\vec{\chi}_{\Gamma}) \wedge  \frac{\ast \phi_{03}}{\|\phi_{03}\|^2} = \int_{M}  \chi_{v_r}H_{13} (\chi_{v}\phi_{23} \wedge \phi_{12}) \wedge \phi_{01} \wedge \frac{\ast \phi_{03}}{\|\phi_{03}\|^2}
$$
up to an error of order $\mathcal{O}(\lam^{-1/2})$, we can simply compute the integral
\begin{multline}\label{wkbm3}
\lam^{\frac{\deg(q_{23})+\deg(q_{12}) +\deg(q_{01})-1}{2}}\int_{M}  \lbrace\chi_{v_r} (e^{-\lam\psi_{13}}\omega_{13,0}) \wedge  (e^{-\lam\psi_{01}}\omega_{01,0}) \wedge \\
\wedge (\lam^{-\frac{\deg(q_{03})}{2}}\frac{e^{-\lam \psi_{03}} (\ast \omega_{03,0})}{\|\phi_{03}\|^2})\rbrace\\
 = \frac{1}{\| \phi_{03} \|^2}\int_M \lbrace \chi_{v_r} (e^{-\lam (\psi_{13}+\psi_{01}+\psi_{03})}\omega_{13,0}\wedge \omega_{01,0}\wedge (\ast \omega_{03,0})).
\end{multline}
We study the exponential decay factor $e^{-\lam(\psi_{13}+\psi_{01}+\psi_{03})}$ of the integrand by defining $g^{}_{13}:=\psi_{13}-f_{13}$. Then, the exponential decay of the integrand can be expressed as
\begin{equation*}
e^{-\lam (g^{}_{13}+g_{01}^+ + g_{03}^-)}.
\end{equation*}

Once again remark \ref{eigenwkbremark} tells us that $g^+_{01}$, $g^+_{12}$, $g_{23}^+$ and $g_{03}^-$ are Bott-Morse with absolute minimums on $V^+_{01}$, $V_{12}^+$, $V_{23}^+$ and $V_{03}^-$ respectively. We also recall from lemma \ref{homotopywkb} that $g^{}_{13}$ is a Bott-Morse function in $W_{v_r}$ with absolute minimum denoted by $V_{13}$ (colored red in the following figure), which is the submanifold $(\bigcup_{-\infty<t<+\infty} \sigma_t(V_{23}^+ \cap V_{12}^+)) \cap W_{v_r}$ flowed out from $V_{23}^+ \cap V_{12}^+$ (colored blue in the following figure), under the flow of $\frac{\nabla f_{13}}{| \nabla f_{13}|^2}$ which is denoted by $\sigma_t$. \\
\begin{figure}[h]
\centering
\includegraphics[scale =0.2]{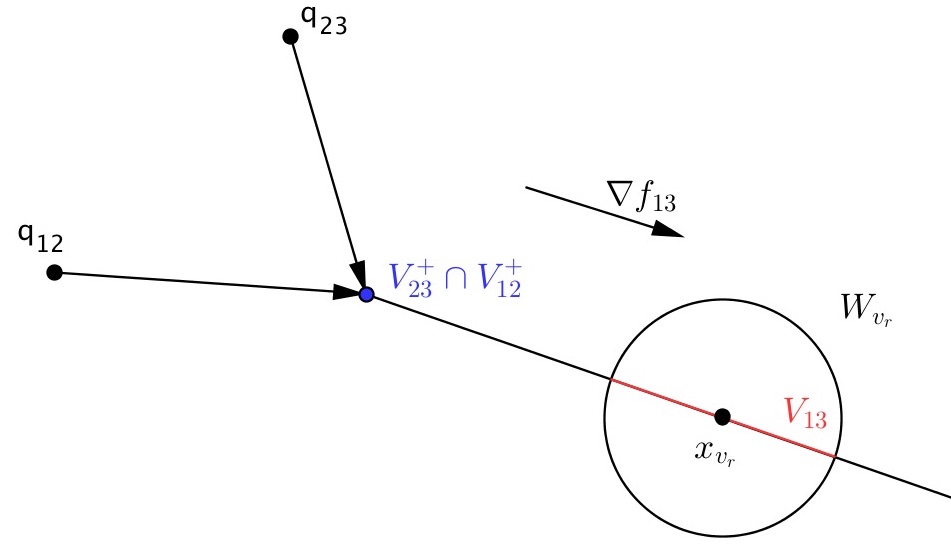}
\end{figure}

The generic assumption of the sequence $\vec{f}$ indicates that $V_{13}$, $V^+_{01}$ and $V^-_{03}$ intersect transversally at $x_{v_r}$ which means $e^{-\lam (g^{}_{13}+g^+_{01}+ g_{03}^-)}$ concentrates at $x_{v_r}$ and hence the leading order contribution will only depend on the value of $\omega_{13,0}\wedge \omega_{01,0} \wedge \ast \omega_{03,0}$ at the point $x_{v_r}$.

\subsubsection{Explicit computations for $m_3$}\label{computation}
From lemma \ref{stat_phase_exp_NB_1}, we know that the leading order contribution of the integral \eqref{wkbm3} depends only on values of $\omega_{13,0}$, $\omega_{01,0}$ and $\ast \omega_{03,0}$ at the point $x_{v_r}$ and the integral can be splitted as
\begin{eqnarray*}
&& \int_M  \chi_{v_r} e^{-\lam (g^{}_{13}+g^+_{01}+g^-_{03})}\omega_{13,0}\wedge \omega_{01,0}\wedge (\ast \omega_{03,0}) \\
&=&\pm (\int_{NV_{13,x_{v_r}}} e^{-\lam g^{}_{13}} \chi_{v_r}\omega_{13,0}) (\int_{NV^+_{23,x_{v_r}}} e^{-\lam g_{23}^+} \chi_{v_r}\omega_{23,0})\\
&&(\int_{NV_{03,x_{v_r}}^-} e^{-\lam g^-_{03}} \chi_{v_r}(\ast \omega_{03,0}))( 1+\mathcal{O}(\lam^{-1})),
\end{eqnarray*}

\noindent where the sign depends on whether the orientations of $ NV_{13} \oplus NV_{01}^+ \oplus NV_{03}^-$ and $TM$ match or not at the point $x_{v_r}$. We will compute the above integrals one by one. We obtain equality 
$$\lam^{\frac{\deg(q_{01})}{2}}\int_{NV^+_{01,x_{v_r}}} e^{-\lam g_{01}^+}  \chi_{v_r}\omega_{01,0} = 1+\mathcal{O}(\lam^{-1}),$$
and 
$$
\frac{\lam^{\frac{\deg(q_{03})}{2}}}{\| \phi_{03} \|^2}(\int_{NV_{03,x_{v_r}}^-} e^{-\lam g^-_{03}} \chi_{v_r}(\ast \omega_{03,0}))= 1+\mathcal{O}(\lam^{-1})
$$
from the lemma \ref{eigenwkbcal}. Moreover, we have
$$
\lam^{\frac{\deg(q_{23})+\deg(q_{12}) -1}{2}}\int_{NV_{13,x_{v_r}}} e^{\lam g^{}_{13}} \chi_{v_r}\omega_{13,0} = (1+\mathcal{O}(\lam^{-1})).
$$
This depends on the fact that
\begin{eqnarray*}
&&\lam^{\frac{\deg(q_{23})+\deg(q_{12})}{2}}\int_{N(V_{23}^+ \cap V_{12}^+)_{x_{v}}} e^{-\lam(g_{23}^+ + g_{12}^+) }\chi_{v} \omega_{23,0} \wedge \omega_{12,0} \\
&=&(\lam^{\frac{\deg(q_{23})}{2}}\int_{N(V_{23}^+)_{x_{v}}} e^{-\lam g_{23}^+} \chi_{v}\omega_{23,0}) (\lam^{\frac{\deg(q_{12})}{2}}\int_{N(V_{12}^+)_{x_{v}}} e^{-\lam g_{12}^+}\chi_{v} \omega_{12,0}) (1+\mathcal{O}(\lam^{-1}))\\
&=& 1+\mathcal{O}(\lam^{-1}),\\
\end{eqnarray*}
and the following lemma.
\begin{lemma}[=Lemma \ref{lem:explicit_normal_computation}]\label{homotopywkbcal} Using same notations in lemma \ref{homotopywkb} and suppose $\chi_S$ and $\chi_E$ are cutoff functions supported in $W_S$ and $W_E$ respectively, then we have
\begin{equation}
\lam^{-1/2}\int_{N(V_E)_{v_E}} e^{-\lam g^{}_{\scalebox{.7}{$\scriptscriptstyle E$}}} \chi_{E} \omega_{E,0} = (\int_{N(V_S)_{v_S}} e^{-\lam g^{}_{\scalebox{.7}{$\scriptscriptstyle S$}}}\chi_{S} \omega_{S,0})(1+\mathcal{O}(\lam^{-1})).
\end{equation}
Furthermore, suppose $\omega_{S,0}(x_S)\in \bigwedge^{top} N(V_S)^*_{x_S}$, we have $\omega_{E,0}(x_E) \in  \bigwedge^{top} N(V_E)^*_{x_E}$. Here $\bigwedge^{top} E$ refers to $\bigwedge^r E$ for a rank $r$ vector bundle $E$.
\end{lemma}

Putting the above together, we get the following 
\begin{equation}
\int_M m_3^{T_1}(\lam,\vec{\chi}_\Gamma) \wedge \frac{\ast \phi_{03}}{\|\phi_{03}\|^2}= \pm (1+\mathcal{O}(\lam^{-1/2})),
\end{equation} 
where the sign depends on matching the orientations of $ NV_{13} \oplus NV_{01}^+ \oplus NV_{03}^-$ and $TM$ at the point $x_{v_r}$. The proof for $m_3(\lam)$ is completed and we move on to the $m_k(\lam)$ case for any $k\geq 3$. The proof is essentially the same as the $m_3(\lam)$ case except involving more combinatorics and notations. \\


\subsection{Proof of $m_k$}

\subsubsection{A priori estimates for $m_k$}\label{aprioriestimate_mk}
We fix a $k$-leafed tree $T$ and denote the corresponding operation by $m_k^{T}(\lam)$. We try to relate $m_k^{T}(\lam)$ to counting of gradient trees of type $T$. Firstly, we define the function $\vec{\dist}_T : M^{|V(T)|} \rightarrow \real_+$ according to the combinatorics of $T$ by
\begin{multline}\label{length_of_tree}
\vec{\dist}_T(\vec{x}) = \sum_{e_{ij} \in E(T)} \dist_{ij}(x_{S}(e_{ij}),x_{E}(e_{ij}))+ \\
\sum_{i=0}^{k-1}\dist_{i(i+1)}(q_{i(i+1)},x_{E}(e_{i(i+1)}))+\dist_{0k}(q_{0k},x_{S}(e_{0k})).
\end{multline}
Here the variables $\vec{x}$ are labelled by the vertices of $T$. ($x_{S}(e)$ and $x_{E}(e)$ refer to the variables corresponding to vertices which are starting point and endpoint of the edge $e$ respectively.) Recall that $E(T)$ is the set of internal edges of $T$ and each interior edge $e$ has a unique label by two integers as $e_{ij}$, corresponding to the Morse function $f_{ij}= f_j-f_i$. The notation $\dist_{ij}$ refers to the Agmon distance corresponding to the Morse function $f_{ij}$. 

$\vec{\dist}_T(\vec{x})$ is the length function of a geodesic tree (may not be unique) with topological type $T$, with interior vertices $\vec{x}$ and semi-infinite edges ended at critical points $q_{ij}$. Similar to the case of $m_3(\lam)$, we have the following lemma.
\begin{lemma}\label{lem:m_k_symplectic_constant}
The function $\vec{\dist}_T$ is bounded below by $A= f_{01}(q_{01})+\dots+ f_{(k-1)k}(q_{(k-1)k})-f_{0k}(q_{0k})$, and it attains minimum at $\vec{x}$ if and only if $\vec{x}$ is the vector consisting of interior vertices of a gradient flow tree of $\vec{f}$ of type $T$ ended at the corresponding sequence of critical points $\vec{q}$.
\end{lemma}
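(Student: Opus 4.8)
The plan is to reduce the statement to a single elementary property of the Agmon distance together with a telescoping identity over the tree $T$. Recall that for a Morse function $g$ and any two points $x,y$, the Agmon distance $\rho_g$ attached to the degenerate metric $|dg|^2 g_0$ satisfies $\rho_g(x,y)\ge |g(x)-g(y)|$, because for any path $\gamma$ from $x$ to $y$ one has $\int |dg|\,|\dot\gamma|\,dt\ge \int |\langle dg,\dot\gamma\rangle|\,dt\ge \bigl|\int \tfrac{d}{dt}g(\gamma)\,dt\bigr|$; moreover equality forces $\gamma$ (Cauchy--Schwarz) to be tangent to $\nabla g$ and (the triangle inequality for integrals) to be traversed monotonically in $g$, i.e.\ $\gamma$ is a reparametrized gradient trajectory of $g$ running between its endpoints, and then $\rho_g(x,y)=|g(x)-g(y)|$. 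Existence of minimizing Agmon geodesics realizing $\rho_g$ is part of the Agmon geometry recorded in \cite{HelNi}. I would apply this on each of the finitely many edges of $T$, with $g=f_{ij}$ on the edge labelled $ij$, giving
\[
\vec{\dist}_T(\vec x)\ =\ \sum_{e}\rho_{f_{\mathrm{label}(e)}}\bigl(s(e),t(e)\bigr)\ \ge\ \sum_{e}\bigl|f_{\mathrm{label}(e)}(s(e))-f_{\mathrm{label}(e)}(t(e))\bigr| ,
\]
where the sum runs over all edges (internal and semi-infinite) of $T$ and $s(e),t(e)$ are the two endpoints of $e$, each being either an interior-vertex variable $x_v$ or one of the fixed critical points $q_{i(i+1)}$, $q_{0k}$.

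For the lower bound I would then pick, on each edge, the sign of the $f$-difference dictated by the edge orientation built into the definition of $\vec{\dist}_T$ (inputs at the $q_{i(i+1)}$, output at $q_{0k}$); with a uniform such choice the identity
\[
\sum_{e}\bigl(f_{\mathrm{label}(e)}(\text{leaf-side end of }e)-f_{\mathrm{label}(e)}(\text{root-side end of }e)\bigr)\ =\ A
\]
holds for \emph{every} $\vec x$, because at each trivalent vertex $v$ the two incoming labels $ij$, $jk$ and the outgoing label $ik$ satisfy the Leibniz-type relation $f_{ij}+f_{jk}=(f_j-f_i)+(f_k-f_j)=f_k-f_i=f_{ik}$, so the three occurrences of $x_v$ cancel and only the critical-point terms from the semi-infinite edges survive. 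Combining the two displays gives $\vec{\dist}_T(\vec x)\ge A$. This is exactly the argument already carried out for $T_1$ in the $m_3(\hp)$ case, now organized around the relation $f_{ij}+f_{jk}=f_{ik}$ at a general interior vertex.

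For the characterization of equality I would argue both implications. If $\vec x$ is the interior-vertex vector of a gradient flow tree $\mathbf f:\mtree\to M$ of type $T$ with endpoints $\vec q$, then $\mathbf f$ restricts on each edge $e$ to a monotone gradient trajectory of $f_{\mathrm{label}(e)}$ joining $s(e)$ to $t(e)$; hence $\rho_{f_{\mathrm{label}(e)}}(s(e),t(e))$ equals the absolute $f$-difference on $e$, with the sign being precisely the one appearing in the telescoping identity (this is what the incoming/outgoing conventions encode), so summing over $e$ yields $\vec{\dist}_T(\vec x)=A$. Conversely, if $\vec{\dist}_T(\vec x)=A$, then equality must hold in \emph{both} displays above: each $\rho_{f_{\mathrm{label}(e)}}(s(e),t(e))$ equals $|f_{\mathrm{label}(e)}(s(e))-f_{\mathrm{label}(e)}(t(e))|$, and each such difference carries the prescribed sign. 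The first fact produces a monotone gradient trajectory of $f_{\mathrm{label}(e)}$ on every edge $e$, the second pins down its direction, and these trajectories match at every shared vertex $x_v$ since they have that vertex as a common endpoint; they therefore assemble into a continuous map $\mtree\to M$ which, by construction, is a gradient flow tree of type $T$ with the required endpoints. Since the lower bound identifies $\inf\vec{\dist}_T$ with $A$ precisely when such a tree exists, ``$\vec{\dist}_T$ attains its minimum at $\vec x$'' and ``$\vec{\dist}_T(\vec x)=A$'' coincide, which is the assertion.

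The step I expect to need the most care is not the telescoping, which is pure bookkeeping, but the Agmon-geometric input: one must know that minimizing Agmon geodesics exist for the degenerate metric $|df_{ij}|^2 g_0$ and that, when $\rho_{f_{ij}}(x,y)=|f_{ij}(x)-f_{ij}(y)|$, such a minimizer is genuinely a single gradient trajectory rather than a concatenation that pauses at intermediate critical points, so that the per-edge pieces really do glue to a gradient flow tree in the sense of the definition. This is where \cite{HelNi} is invoked, and where the standing generic assumption \ref{genericassumption} — which rules out the degenerate broken configurations for the trees that actually contribute — keeps the statement clean.
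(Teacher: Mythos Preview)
Your proposal is correct and follows essentially the same route as the paper's proof: apply the Agmon inequality $|f_{ij}(x)-f_{ij}(y)|\le \dist_{ij}(x,y)$ edge by edge, telescope using $f_{ij}+f_{jk}=f_{ik}$ at each trivalent vertex, and use the equality case to recover gradient trajectories on every edge. The paper compresses all of this into ``apply this fact to each term in (\ref{length_of_tree}) and the result follows'', whereas you spell out the telescoping and the two directions of the equality characterization; your added caveat about broken versus genuine trajectories is exactly what the paper is gesturing at with the phrase ``generalized integral curve of $\nabla f_{ij}$'' (citing \cite{HelSj4} rather than \cite{HelNi}), so the two arguments match in substance.
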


\begin{proof}
The proof relies on the fact (see \cite{HelSj4}) that we have \[
|f_{ij}(x)-f_{ij}(y)|\leq \dist_{ij}(x,y),\]
if $f_{ij}$ is a Morse function on $M$, and $\dist_{ij}(x,y)$ is the Agmon distance. Furthermore, the equality $f_{ij}(x)-f_{ij}(y)=\dist_{ij}(x,y)$ forces the geodesic from $y$ to $x$ to be a generalized integral curve of $\nabla f_{ij}$ by Lemma \ref{lem:agmon_dist_flow_line}. We apply this fact to each term in (\ref{length_of_tree}) and the result follows.
\end{proof}

Similar to the $m_3(\lam)$ case, every gradient flow tree $\Gamma \in \mathcal{M}(\vec{f},\vec{q})(T)$ is associated with a unique minimum point $\vec{x}_{\Gamma} \in M^{|V(T)|}$ of $\vec{\dist}_T$. For each tree, we take a covering $W_\Gamma$ of $\vec{x}_\Gamma$, given by a product $W_\Gamma = \prod_{v \in V(T)} W_{\Gamma,v}$, where each $W_{\Gamma,v}$ is an open subsets in $M$ containing $x_v$ such that all $W_{\Gamma,v}$'s are disjoint from each other. If we further take $D_\Gamma=\prod_{v \in V(T)} D_{\Gamma,v}$ such that $\overline{D_{\Gamma,v}} \subset W_{\Gamma,v}$, we have a constant $C>0$ depending on size of $D_\Gamma$'s such that $\vec{\dist}_T \geq A+C$ on $ M^{|V(T)|} \setminus D_\Gamma$ (here $A$ is the constant in the lemma \ref{lem:m_k_symplectic_constant}). We are going to localize the integral \eqref{mkintegral0} as follows.

We take a finite covering of $M$ with balls $\{B(x,r)\}_{B(x,r) \in \mathcal{J}}$ of radius $r$ centering at $x$, with a partition of unity $\{\chi_B\}_{B \in \mathcal{J}}$ subordinating to it. We choose a covering $\{B_r(\vec{x})\}_{B \in \mathcal{I}}$ of $M^{|V(T)|}$ given by product $B_r(\vec{x}) = \prod_{v \in V(T)} B(x_v,r)$, where $B(x_v,r) \in \mathcal{J}$. We decompose $\mathcal{I} = \mathcal{I}_1 \cup \mathcal{I}_2$ such that $B\cap\overline{D_\Gamma}$ is empty for all $B\in\mathcal{I}_2$ and $\overline{B} \subset W_{\Gamma}$ for all $B\in\mathcal{I}_1$.
These can be achieved by choosing sufficiently small $r$.

We can take cutoff functions subordinating to the covering $\{B\}_\mathcal{I}$, given by product of functions $\chi_B$ on $M$. We write $\vec{\chi}_{B} = \prod_{v\in V(T)} \chi_{B(x_v,r)}$ which is a function supported in $B$. We will use $\vec{\chi}_{B}$ to cut off the following integral
\begin{equation}\label{mkintegral}
\int_{M}  \deprodtree{k}{T}(\phi(q_{(k-1)k}),\dots,\phi(q_{01}))\wedge \frac{\ast \phi_{0k}}{\|\phi_{0k}\|^2}.
\end{equation}
Recall that the $\deprodtree{k}{T}$ is defined by using wedge product and the homotopy operators $H_{ij}$ and the combinatorics of the tree $T$. We cut off the operation $m_k^{T}(\lam)$ using the function $\chi_{B(x_v,r)}$ whenever taking wedge product at the vertex $v$. We will write $m_k^T(\lam,\vec{\chi})$ for the integral after cutting off by $\vec{\chi}$. Therefore we have
\begin{equation}
m_k^T(\lam)(\phi(\vec{q})) = \sum_{B \in \mathcal{I}_1} m_k^T(\lam,\vec{\chi}_{B})(\phi(\vec{q})) + \sum_{B \in \mathcal{I}_2} m_k^T(\lam,\vec{\chi}_{B})(\phi(\vec{q})),
\end{equation}
where $m^T_k(\lam,\vec{\chi}_{\vec{B}})(\phi(\vec{q}))$ stand for $A_\infty$ operation after cutting off by $\vec{\chi}_{\vec{B}}$. Recall that there is a unique root vertex $v_r$ associated to the direct tree $T$, by applying the resolvent estimate in lemma \ref{resolventlemma} and the estimate in lemma \ref{eigenestimate2}, we obtain the following:

\begin{lemma}
For any $\epsilon>0$, there exist $r(\epsilon), \lam(\epsilon)>0$ such that if we take the covering of radius $r < r(\epsilon)$, we have
\begin{equation}
\|m_k^T(\lam,\vec{\chi}_{B})(\phi(\vec{q}))\wedge \frac{\ast \phi_{0k}}{\|\phi_{0k}\|^2} \|_{L^{\infty}(M)} = \mathcal{O}_{r,\epsilon}(e^{-\lam (\vec{\dist}_T(\vec{x})-A-\epsilon)})
\end{equation}
for any $\lam>\lam(\epsilon)$, where $\vec{x}$ is the center of the ball $B$.
\end{lemma}

The proof is essentially the same as the case for $m_3(\lam)$. Similarly, we have
$$
\sum_{B \in \mathcal{I}_2}\int_{M} m_k^T(\lam,\vec{\chi}_{B})\wedge \frac{\ast \phi_{0k}}{\|\phi_{0k}\|^2} = \mathcal{O}_{r, \epsilon} (e^{-\lam(\frac{C}{2})}),
$$
for sufficiently large $\lam$. It follows from the fact that $\vec{\dist}_T(\vec{x}) \geq A + C$ for those covering in $\mathcal{I}_2$. This result basically says that the integral $\deprodtree{k}{T}$ can be localized to gradient flow tree using the cutoff mentioned above. To summarize, we have the following proposition.
\begin{prop}\label{apriori_cutoff}
For each gradient flow tree $\Gamma$, there is a sequence of cutoff functions $\{\vec{\chi}_\Gamma\}$ which is supported in $W_\Gamma$ and satisfies $\vec{\chi}_\Gamma \equiv 1$ on $\overline{D_\Gamma}$ such that
\begin{eqnarray*}
&& \int_{M} \deprodtree{k}{T}(\phi(\vec{q}))\wedge \frac{\ast \phi_{0k}}{\|\phi_{0k}\|^2}\\
&=& \sum_{\Gamma \in \mathcal{M}(\vec{f},\vec{q})(T)} \int_{M} m_k^T(\lam,\vec{\chi}_{\Gamma}) (\phi(\vec{q}))\wedge \frac{\ast \phi_{0k}}{\|\phi_{0k}\|^2}+ \mathcal{O}(e^{-\lam(\frac{C}{2})}),
\end{eqnarray*}
when $\lam$ is sufficiently large.
\end{prop}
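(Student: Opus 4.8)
The approach is to unravel $\deprodtree{k}{T}$ into its constituent operations --- wedge products at the interior vertices and homotopy operators $H_{ij}=d_{ij}^{*}G_{ij}$ along the internal edges, read off from the combinatorics of $T$ --- and to track how the exponential decay rate propagates through this composition. Inserting the partition of unity $\{\vec{\chi}_B\}_{B\in\mathcal{I}}$ at every vertex, as set up above, produces the decomposition $\deprodtree{k}{T}(\vec{q})=\sum_{B\in\mathcal{I}}m_k^T(\hp,\vec{\chi}_B)(\vec{q})$, so it suffices to prove the decay lemma stated just above (the excerpt only sketches it by analogy with the $m_3(\hp)$ case) and then reorganize the sum.

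To bound a single piece $m_k^T(\hp,\vec{\chi}_B)$ with $B=\prod_v B(x_v,r)$, I would propagate decay from the incoming leaves toward the outgoing root, exactly as in the $m_3^{T_1}(\hp)$ argument in Section~\ref{m3estimate}. By Lemma~\ref{eigenestimate2} each input eigenform $\phi(q_{i(i+1)})$ contributes the rate $\dist_{i(i+1)}(q_{i(i+1)},\cdot)$ up to an $\mathcal{O}(\epsilon+\eta)$ loss; taking a wedge product at an interior vertex whose cutoff is supported in a ball of radius $r$ adds the rates of the incoming factors up to an $\mathcal{O}(r)$ loss; applying $H_{ij}$ across an internal edge $e_{ij}$ transports an $L^{\infty}$ bound localized near $x_{S}(e_{ij})$ to one localized near $x_{E}(e_{ij})$ at the cost $e^{-(\dist_{ij}(x_{S}(e_{ij}),x_{E}(e_{ij}))-\epsilon)/\hp}$, which is precisely the content of the resolvent estimate Lemma~\ref{resolventlemma} (the unbounded factor $d_{ij}^{*}$ being absorbed by the $C^{j}$, $W^{k_j,2}$ form of that estimate together with elliptic regularity); and the terminal projection $P_{0k}$ followed by the pairing with $\phi(q_{0k})/\|\phi(q_{0k})\|^{2}$ contributes $\dist_{0k}(q_{0k},x_{S}(e_{0k}))$, again up to $\mathcal{O}(\epsilon+\eta)$. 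Summing these exponents over the tree $T$ reproduces exactly $\vec{\dist}_T(\vec{x})$, so after choosing $\epsilon$, $\eta$, $r$ small enough one obtains $m_k^T(\hp,\vec{\chi}_B)=\mathcal{O}(e^{-(\vec{\dist}_T(\vec{x})-\epsilon)/\hp})$.

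It then remains to split $\mathcal{I}=\mathcal{I}_1\cup\mathcal{I}_2$ and reassemble. For $B\in\mathcal{I}_2$ one has $\vec{\dist}_T(\vec{x})\ge A+C$ by construction of $D_\Gamma$, so shrinking $\epsilon$ below $C/2$ gives $m_k^T(\hp,\vec{\chi}_B)=\mathcal{O}(e^{-(A+C/2)/\hp})$; since $\mathcal{I}$ is a finite cover whose cardinality is independent of $\hp$, the whole sum $\sum_{B\in\mathcal{I}_2}m_k^T(\hp,\vec{\chi}_B)$ is still $\mathcal{O}(e^{-(A+C/2)/\hp})$. Each $B\in\mathcal{I}_1$ lies in a single $W_\Gamma$, the $W_\Gamma$ being pairwise disjoint, so summing the cutoffs attached to the balls of $\mathcal{I}_1$ that fall in $W_\Gamma$ yields a function $\vec{\chi}_\Gamma$ supported in $W_\Gamma$ with $\vec{\chi}_\Gamma\equiv 1$ on $\overline{D_\Gamma}$, and collecting the terms gives the asserted identity. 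The step I expect to be the real obstacle is the middle one: verifying that $H_{ij}$, built from the unbounded operator $d_{ij}^{*}$, genuinely satisfies the pointwise decay estimate with rate $\dist_{ij}$, and that the losses accumulated at the vertices and over the finite cover can all be made uniformly small. This is exactly where the full strength of the resolvent estimate of Section~\ref{resolvent}, proved later in the paper, is needed, which is why the analytic core of the argument is deferred to the second part.
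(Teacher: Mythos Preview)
Your proposal is correct and follows essentially the same approach as the paper: decompose via the product partition of unity $\{\vec{\chi}_B\}$, propagate the exponential decay through the tree using Lemma~\ref{eigenestimate2} at the leaves and the resolvent estimate Lemma~\ref{resolventlemma} along internal edges to obtain the bound $m_k^T(\hp,\vec{\chi}_B)=\mathcal{O}(e^{-(\vec{\dist}_T(\vec{x})-\epsilon)/\hp})$, then split $\mathcal{I}=\mathcal{I}_1\cup\mathcal{I}_2$ and sum. In fact your write-up is more explicit than the paper's own treatment, which simply states the key decay lemma and says ``the proof is essentially the same as the case for $m_3(\hp)$''; your flagged concern about $d_{ij}^*$ being unbounded is indeed handled exactly as you say, by the $W^{k_j,2}\to C^0$ form of the resolvent estimate proved in Section~\ref{resolvent}.
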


\begin{remark}
In the above argument, the neighborhood $W_\Gamma$ can be chosen to be arbitrary small. We will obtain a smaller constant $C$ if we shrink the neighborhood $W_{\Gamma}$.
\end{remark}

After localizing the integral, we move on to the section concerning WKB approximation which helps to compute of the leading order contribution of $m_k^T(\lam,\vec{\chi}_{\Gamma})$.

\subsubsection{WKB method for $m_k$}\label{wkbinduction}
We consider a gradient tree $\Gamma$ of type $T$, with $k$ semi-infinite incoming edges. Recall in section \ref{treedefinition} that each edge in $T$ is assigned with a label by two integers $i$ and $j$. We will use $ij$ to represent an edge in $T$ and denote the corresponding edge in the gradient tree $\Gamma$ by $e_{ij}$. The vertex in the gradient tree corresponding to $v$ in $T$ will be denoted by $x_{v}$. We again omit the dependence on $\Gamma$ in our notations as it is already fixed. We are going to associate $\phi_{(ij,v)} \in \Omega^*_{ij}(M,\lam)$, together with its WKB approximation 
\[
\phi_{(ij,v)} \sim e^{-\lam \psi_{(ij,v)}}\lam^{r_{(ij,v)}}(\omega_{(ij,v),0}+\omega_{(ij,v),1} \lam^{-1/2}+\dots)\]
in some neighborhood $W_v$ of $x_v$ to each flag $(ij,v)$ as shown in the figure \ref{fig:neighborhoodW}. We also fix cutoff functions $\chi_{v}$ supported in $W_v$ and study the integral $m_k^T(\lam,\vec{\chi})(\vec{q})$ using the arguments in section \ref{aprioriestimate}.

\begin{figure}[h]
\centering
\includegraphics[scale=0.25]{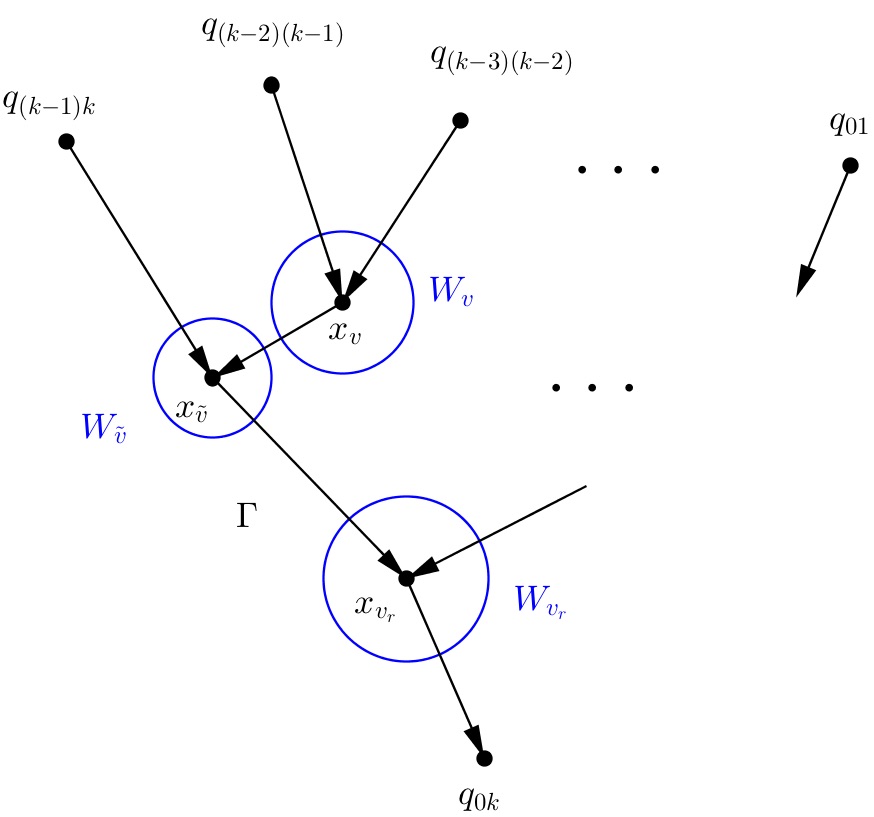}
\caption{}\label{fig:neighborhoodW}
\end{figure}

We define the followings inductively.
\begin{itemize}
\item[(1)] for a semi-infinite incoming edge $i(i+1)$ which ends at vertex $v$, we take $\phi_{(i(i+1),v)}$ to be the input $\phi_{i(i+1)}$, with its WKB approximation in $W_v$ as in lemma \ref{eigenwkb}. We also let $g_{(i(i+1),v)}= \psi_{(i(i+1),v)} - f_{i(i+1)}$. We also choose $W_v$ to be small enough so that the WKB approximations of all input forms associated to edges connected to $v$ holds in $W_{v}$;
\item[(2)] for an internal edge $il$ which starts at vertex $v$, $v$ must be the endpoint of edges $ij$ and $jl$ as shown in figure \ref{fig:interiorvertex},
\begin{figure}[h]\label{interiorvertex}
\centering
\includegraphics[scale=0.22]{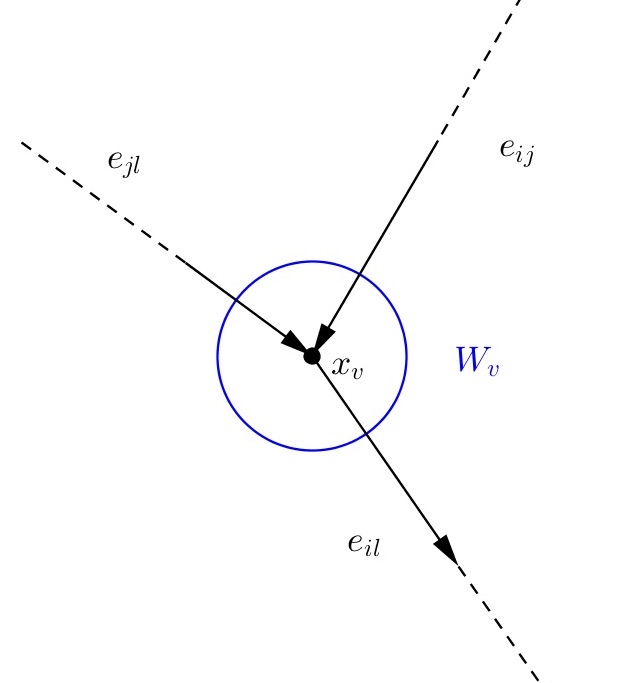}
\caption{}\label{fig:interiorvertex}
\end{figure}
we take $\phi_{(il,v)} = \phi_{(jl,v)} \wedge \phi_{(ij,v)}$. The WKB expression of $\phi_{(il,v)}$ is defined by the following equations:
\begin{eqnarray*}
\psi_{(il,v)} &=& \psi_{(ij,v)}+\psi_{(jl,v)},\\
\omega_{(il,v),n} &=& \sum_{m+m'=n} \omega_{(jl,v),m} \wedge \omega_{(ij,v),m'},\\
r_{(il,v)} &=& r_{(jl,v)} + r_{(ij,v)}.
\end{eqnarray*}
We also let $g_{(il,v)} = g_{(ij,v)}+g_{(jl,v)}$;
\item[(3)] for an internal edge $ij$ with its starting vertex $v_S$ and ending vertex $v_E$ as shown in figure \ref{fig:internaledge},
\begin{figure}[h]
\centering
\includegraphics[scale=0.22]{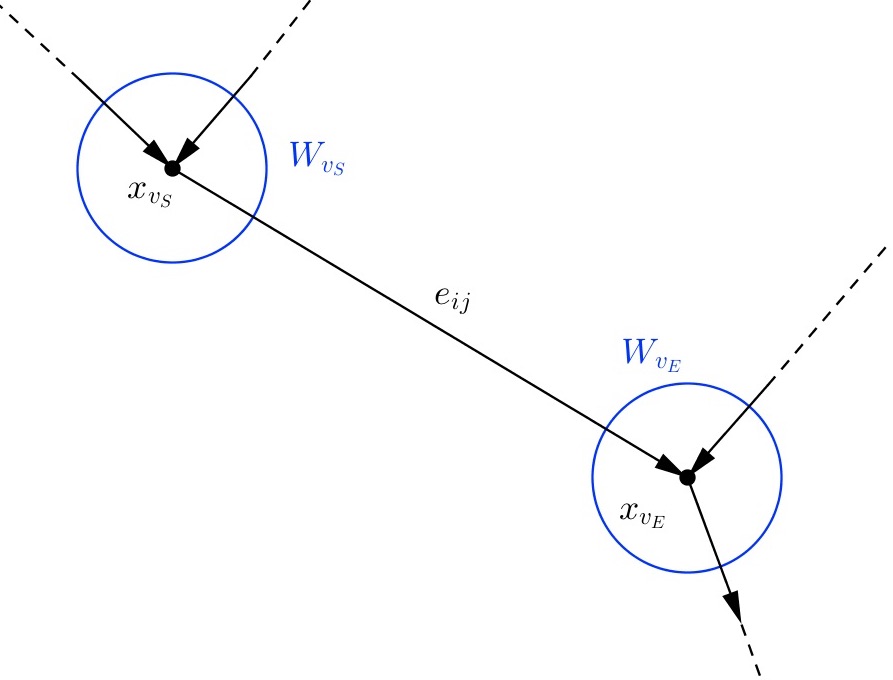}
\caption{}\label{fig:internaledge}
\end{figure} 
we define $\phi_{(ij,v^{}_{\scalebox{.7}{$\scriptscriptstyle E$}})}= H_{ij}(\chi_{v^{}_{\scalebox{.7}{$\scriptscriptstyle S$}}}\phi_{(ij,v^{}_{\scalebox{.7}{$\scriptscriptstyle S$}})})$ in $W_{v^{}_{\scalebox{.7}{$\scriptscriptstyle E$}}}$ and the corresponding WKB approximation can be obtained from lemma \ref{homotopywkb} if $\supp(\chi_{v^{}_{\scalebox{.7}{$\scriptscriptstyle S$}}})$ and $W_{v^{}_{\scalebox{.7}{$\scriptscriptstyle E$}}}$ are chosen to be small enough.
We also define $g_{(ij,v^{}_{\scalebox{.7}{$\scriptscriptstyle E$}})} = \psi_{(ij,v^{}_{\scalebox{.7}{$\scriptscriptstyle E$}})} - f_{ij}$ and $r_{(ij,v^{}_{\scalebox{.7}{$\scriptscriptstyle E$}})} = r_{(ij,v^{}_{\scalebox{.7}{$\scriptscriptstyle S$}})} - \half$.
\item[(4)] for the semi-infinite outgoing edge $0k$ with the root vertex $v_r$, we take $\phi_{(0k,v_r)}$ to be the form $\phi_{0k}$, with WKB approximation from lemma \ref{eigenwkb}. We also define $g_{(0k,v_r)} = \psi_{(0k,v_r)} + f_{0k}$. 
\end{itemize}

\begin{remark}
In section \ref{aprioriestimate}, $\supp(\chi_{\Gamma,v})$ at each internal vertex $v$ has to be chosen to be small enough so that lemma \ref{homotopywkb} can be applied.
\end{remark}

From the definition of $m_k^T(\lam,\vec{\chi}_\Gamma)$, we see that 
\begin{equation*}
\int_{M}  m_k^T(\lam,\vec{\chi}_\Gamma)(\phi_{(k-1)k},\dots,\phi_{01})\wedge \frac{\ast \phi_{0k}}{\|\phi_{0k}\|^2}
= \int_{M} \phi_{(jk,v_r)} \wedge \phi_{(0j,v_r)} \wedge \frac{\ast \phi_{(0k,v_r)}}{\| \phi_{(0k,v_r)}\|^2},
\end{equation*}
if three edges $0j$, $jk$ and $0k$ are meeting at the root vertex $v_r$.
Applying lemma \ref{eigenwkb} to input forms $\phi_{i(i+1)}$ and lemma \ref{homotopywkb} to homotopy operators $H_{ij}$ along internal edges $e_{ij}$, we prove that each WKB approximation
$$
\phi_{(ij,v)} \sim e^{-\lam \psi_{(ij,v)}}\lam^{r_{(ij,v)}}(\omega_{(ij,v),0}+\omega_{(ij,v),1}\lam^{-1/2}+\dots)
$$
is an $C^\infty$ approximation with error $e^{-\lam \psi_{(ij,v)}}\mathcal{O}(\lam^{-N})$ for arbitrary $N \in \inte_+$. Therefore, we can replace each $\phi_{(ij,v)}$ by the first term in its WKB approximation for computing the leading order contribution. We obtain
\begin{multline}\label{mkfinalint}
\langle m_k^T(\lam,\vec{\chi}_\Gamma)(\phi_{(k-1)k},\dots,\phi_{01}), \frac{\phi_{0k}}{\| \phi_{0k}\|^2}\rangle\\
=\lbrace \lam^{r_{(jk,v_r)} + r_{(0j,v_r)} +r_{(0k,v_r)}} \int_M e^{-\lam (\psi_{(jk,v_r)}+\psi_{(0j,v_r)}+\psi_{(0k,v_r)})}\\
\chi_{v_r}(\omega_{(jk,v_r),0}\wedge \omega_{(0j,v_r),0}\wedge \frac{\ast \omega_{(0k,v_r),0}}{\| \phi_{0k}\|^2} )\rbrace(1+\mathcal{O}(\lam^{-1/2})).
\end{multline}


\subsubsection{Explicit computation for $m_k$}
The argument of the general case is similar to the case $k=3$, with more combinatorics involved. 
Similar to the previous section, we may drop the dependence of $\Gamma$ in our notations. We are going to show that 
\begin{equation}
\int_{M}  m_k^T(\lam,\vec{\chi}_{\Gamma})\wedge \frac{\ast \phi_{0k}}{\|\phi_{0k}\|^2} =\pm (1+\mathcal{O}(\lam^{-1/2})),
\end{equation}
where the sign agrees with that associated to the gradient tree $\Gamma$ in Morse category. We begin with some notations associated to $\Gamma$. 

\begin{notation}\label{flowtreeorientation1}
Given a gradient tree $\Gamma$, we inductively associate to each flag $(ij,v)$ an oriented closed submanifold $V_{(ij,v)}\subset W_{v}$ by specifying orientation of its normal bundle. We require:
\begin{itemize}
\item[(1)] for each semi-infinite incoming edge $i(i+1)$ with ending vertex $v$, we let $V_{(i(i+1),v)}:= V_{q_{i(i+1)}}^+ \cap W_{v}$, where $V_{q_{i(i+1)}}^+$ is the stable submanifold of $f_{i(i+1)}$ from the critical point $q_{i(i+1)}$ with the chosen orientation $\nu_{(i(i+1),v)}$ equals to that in the Morse category;
\item[(2)] for an internal edge $il$ with its starting vertex $v$ and assume $ij$ and $jl$ are two incoming edges meeting $e_{il}$ at $v$ as in figure \ref{fig:interiorvertex}. We let $V_{(il,v)} = V_{(ij,v)}\cap V_{(jl,v)}$ (the intersections is transversal from the generic assumption) and $\nu_{(il,v)} = \nu_{(jl,v)} \wedge \nu_{(ij,v)}$, if $\nu_{(ij,v)}$ and $\nu_{(jl,v)}$ are two corresponding orientation forms;
\item[(3)] for an internal edge $ij$ with its starting vertex $v_S$ and ending vertex $v_E$, we define $V_{(ij,v^{}_{\scalebox{.7}{$\scriptscriptstyle E$}})}$ to be $V_E$ obtained from applying lemma \ref{homotopywkb} to the homotopy operator $H_{ij}$. The orientation form $\nu_{(ij,v^{}_{\scalebox{.7}{$\scriptscriptstyle E$}})}$ is chosen such that $[\nu_{(ij,v^{}_{\scalebox{.7}{$\scriptscriptstyle E$}})}] = [df_{ij} \wedge \nu_{(ij,v^{}_{\scalebox{.7}{$\scriptscriptstyle S$}})}]$, under the identification by flow of $\nabla f_{ij}$;
\item[(4)]  for the semi-infinite incoming edge $0k$ with root vertex $v_r$, we let $V_{(0k,v_r)}:= V_{q_{0k}}^- \cap W_{v_r}$, where $V_{q_{0k}}^-$ is the unstable submanifold of $f_{0k}$ from critical point $q_{0k}$ with the chosen orientation $\nu_{(0k,v_r)}$ equal to that in the Morse category.
\end{itemize}
We further choose an isomorphism and projection map for every flag $(ij,v)$
\begin{equation}
\begin{CD}
W_v    @>\cong>>    NV_{(ij,v)}\\
@V\pi_{(e,v)}VV  @V\pi_{NV_{(ij,v)}}VV\\
V_{(ij,v)}    @= V_{(ij,v)},\\
\end{CD}
\end{equation}
by further shrinking $W_v$ suitably.\\
\end{notation}

We can therefore assign a sign to the gradient tree $\Gamma$ in the following way.

\begin{definition}\label{flowtreeorientation2}
For a generic sequence of Morse functions $\vec{f}$ with corresponding critical points $q_{01},\dots, q_{(k-1)k}, q_{0k}$ satisfying the degree condition \eqref{degree_eq}, which gives a gradient tree $\Gamma$, we define
\begin{equation}
sign(\Gamma) = sign(\frac{\nu_{(jk,v_r)} \wedge \nu_{(0j,v_r)}\wedge \nu_{(0k,v_r)}}{vol_g}),
\end{equation}
where $0j, jk$ and $0k$ are edges joining the root vertex $v_r$ as in section \ref{fig:interiorvertex}, $\nu_{(ij,v)}$ is the orientation of normal bundle defined in notation \ref{flowtreeorientation1} and $\nu_{(0k,v_r)}$ is the orientation of chosen for $V_{q_{0k}}^-$. 
\end{definition}


We are going to show that\[
\int_{N(V_{(ij,v)})_{x_v}} (e^{-\lam g_{(ij,v)}}\lam^{r_{(ij,v)}} \chi_{v}\omega_{(ij,v),0}) = (1 + \mathcal{O}(\lam^{-1})),\]
for any flag $(ij,v)$ except the outgoing edge $0k$, where $r_{(ij,v)}$ is the number of internal edges before the vertex $v$. This can be seen inductively along the tree $T$. We see that:
\begin{itemize}
\item[(1)] it is true for the semi-infinite incoming edge $i(i+1)$ by lemma \ref{eigenwkbcal};\\
\item[(2)] for an internal edge $il$ with starting vertex $v$ and assume $ij$ and $jl$ are two incoming edges meeting $il$ at $v$, we have
\begin{eqnarray*}
&& \lam^{r_{(il,v)}} \int_{N(V_{(il,v)})_{x_v}} e^{-\lam g_{(il,v)}}\chi_{v} \omega_{(il,v),0}\\
&\equiv&\lam^{r_{(jl,v)} + r_{(ij,v)}} \int_{N(V_{(jl,v)} \cap V_{(ij,v)})_{x_v}} e^{-\lam (g_{(jl,v)} + g_{(ij,v)})}\chi_{v} \omega_{(jl,v),0} \wedge \omega_{(ij,v),0} \\
&\equiv&(\lam^{r_{(jl,v)}} \int_{N(V_{(jl,v)})_{x_{v}}} e^{-\lam g_{(jl,v)}} \chi_{v}\omega_{(jl,v),0}) (\lam^{r_{(ij,v)}}\int_{N(V_{(ij,v)})_{x_{v}}} e^{-\lam g_{(ij,v)}}\chi_{v} \omega_{(ij,v),0})\\
&\equiv& 1,\\
\end{eqnarray*}
modulo an error of order $\mathcal{O}(\lam^{-1})$;\\
\item[(3)] for an internal edge $ij$ with starting vertex $v_S$ and ending vertex $v_E$, we make use of lemma \ref{homotopywkbcal} as before. 
\end{itemize}

We can now calculate the leading contribution from the integral \eqref{mkfinalint}. Recall that we have
\begin{equation}
\psi_{(0j,v_r)}+\psi_{(jk,v_r)} - f_{0k} = g_{(0j,v_r)}+g_{(jk,v_r)}.
\end{equation}
Therefore we obtain
\begin{eqnarray*}
&&\lam^{r_{(0j,v_r)}+r_{(jk,v_r)}+r_{(0k,v_r)}} \lbrace\int_{M} e^{-\lam(\psi_{(0j,v_r)}+\psi_{(jk,v_r)}+\psi_{(0k,v_r)})} 
\\
&&\;\;\;\;\;\;\;\;\;\;\;\;\;\;\;\;\;\;\;\;\;\;\;\;\;\;\;\;\;\;\;\; \chi_{v_r}\cdot(\omega_{(jk,v_r),0}\wedge \omega_{(0j,v_r),0}\wedge \frac{\ast\omega_{(0k,v_r),0}}{\| \phi_{0k}\|^2}) \rbrace \\
&=&\lam^{r_{(0j,v_r)}+r_{(jk,v_r)}+r_{(0k,v_r)}}\lbrace \int_M e^{-\lam (g_{(0j,v_r)}+g_{(jk,v_r)}+g_{(0k,v_r)})}\\
&&\;\;\;\;\;\;\;\;\;\;\;\;\;\;\;\;\;\;\;\;\;\;\;\;\;\;\;\;\;\;\;\;  \chi_{v_r}(\omega_{(jk,v_r),0}\wedge \omega_{(0j,v_r),0} \wedge \frac{\ast\omega_{(0k,v_r),0}}{\| \phi_{0k}\|^2}) \rbrace\\
&=& \pm  (1+ \mathcal{O}(\lam^{-1})),
\end{eqnarray*}
which means
\begin{equation}
\int_M m_k^T(\lam,\vec{\chi}_\Gamma) \wedge \frac{\ast \phi_{0k}}{\|\phi_{0k}\|^2}=\pm (1+\mathcal{O}(\lam^{-1/2})).
\end{equation}
The sign $\pm$ comes from matching the orientation $[\nu_{(jk,v_r)} \wedge \nu_{(0j,v_r)}\wedge \nu_{(0k,v_r)}]$ against that of $vol_g$, which agrees with the sign in Morse category. This completes the proof of our main theorem.
\section{WKB for Green operator}\label{approximation}
In lemma \ref{resolventlemma}, we have a rough estimate for the twisted Green operator by a Morse function $f$, or the homotopy operator $H_f = d^*_f G_f (I -P_f)$, with an error of order $\mathcal{O}(e^{\lam \epsilon})$. In a neighborhood of the gradient flow line segment of $f$, we are going to improve this results to estimate with error $\mathcal{O}(\lam^{-N})$ for an arbitrary $N \in \inte_+$. This is done by the WKB method for \textit{inhomogeneous} Laplace equation \eqref{homotopyeq}.\\

We study the local behavior of the homotopy operator $H_f$ along a normalized gradient flow line segment
\begin{eqnarray*}
\gamma :[0,T] &\longrightarrow &M,\\
 \frac{d\gamma}{dt} &=&\frac{\nabla f }{| \nabla f |_f},\\
   \gamma(0) = x_S& ,&  \gamma(T)  = x_E,
\end{eqnarray*}
with $\nabla f |_\gamma \neq 0$ along $\gamma$, as shown in figure \ref{fig:flow_line}.\\

Suppose that $\zeta_E = H_f (\chi_S \zeta_S)$ and we have a WKB approximation of $\zeta_S$ in $W_S$ of the form 
\begin{equation}\label{awkbexpansion}
\zeta_S \sim e^{-\lam \psi^{}_{\scalebox{.7}{$\scriptscriptstyle S$}}}(\omega_{S,0}+\omega_{S,1} \lam^{-1/2}+ \omega_{S,2} \lam^{-1} + \dots),
\end{equation}
we aim at establishing a similar expression
\begin{equation}\label{bwkbexpansion1}
\zeta_E \sim  \lam^{-1/2}e^{-\lam \psi^{}_{\scalebox{.7}{$\scriptscriptstyle E$}}}(\omega_{E,0} + \omega_{E,1}\lam^{-1/2} + \dots) 
\end{equation}
of $\zeta_E$ in some open neighborhood $W_E$ of $x_E$. It is possible to propagate the estimate along $\gamma$ since $\nabla f \neq 0$ along $\gamma$.

\begin{figure}[h]
\centering
\includegraphics[scale = 0.2]{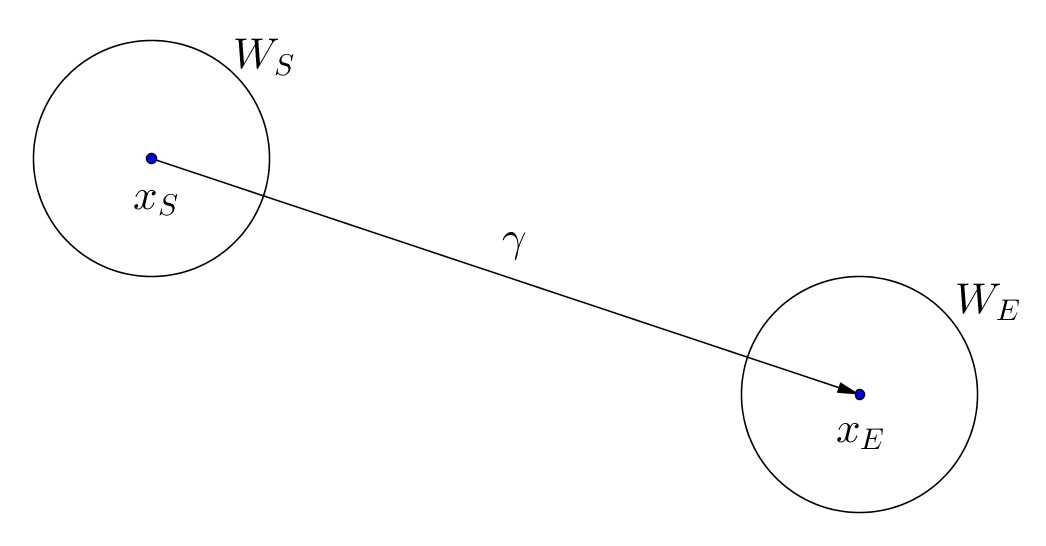}
\caption{}\label{fig:flow_line}
\end{figure}

The key step is to determine $\psi_E$, which is given in the following subsection. As the first trial, we consider the function 
\[\tpsi_E(x):=\inf_{y\in W_S}\{\psi_S(y)+\dist_f(y,x)\},\]
since $e^{-\lam \tpsi^{}_{\scalebox{.7}{$\scriptscriptstyle E$}}}$ is the expected exponential decay suggested by the resolvent estimate in lemma \ref{resolventlemma}.

Unfortunately, $\tpsi_E$ is not the correct choice since it is singular along a hypersurface $U_S$ through $x_S$, and it cannot be used for the iteration process as we keep on differentiating it. 

In the coming section \ref{exp_decay}, we will solve the minimal configuration in variational problem associated to $\inf_{y\in W_S} (\psi_S(y) + \dist_f(y,x))$ and find that the point $y$ must lie on $U_S$, with a unique geodesic joining $x$ which realizes $\dist(y,x)$, for those $x$ closed enough to $x_E$. These family of geodesics $\{ \gamma_y\}_{y \in U_S}$ gives a foliation of a neighborhood of $\gamma$. Therefore we can use $\psi_E(\gamma_y(t)) = \psi_S(y) + t $ as an extension of $\tilde{\psi}_E$ across $U_S$. We then use $\psi_E$ in the iteration similar to classical WKB approximation to obtain the above expansion \eqref{bwkbexpansion1}.

\subsection{The phase function $\psi_E$}\label{exp_decay}
We apply variational method to study the function $\tpsi_E(x)$. Fixing $x\in M$, we define $\alpha(\epsilon,t) := \alpha_\epsilon(t) :(-\epsilon_0,\epsilon_0)\times[0,1]\rightarrow M\setminus Crit(f)$ such that $\alpha_\epsilon(1)\equiv x$ for all $\epsilon$. To minimize the functional\[
L(\epsilon)=\psi_S(\alpha_\epsilon(0))+\int_0^1|\pd_t \alpha_\epsilon|_f dt,\]
we take derivative and get
\begin{lemma} (First variation formula)\label{firstvariation}
\begin{equation}
\frac{dL}{d\epsilon} = \langle \tilde{\nabla}\psi_S(\alpha_\epsilon),\pd_{\epsilon}\alpha_\epsilon\rangle_f|_{t=0} + \int_0^1\frac{1}{|\pd_t \alpha|_f}\langle\tilde{\nabla}_t\pd_{\epsilon}\alpha,\pd_t \alpha \rangle_f dt.
\end{equation}
Here $\tilde{\nabla}$ is the Levi-Civita connection corresponding to the Agmon metric $\langle \cdot , \cdot \rangle_f$ in definition \ref{agmondistance}. 
\end{lemma}
If we assume $\alpha_0$ is a geodesic (with respect to twisted metric $|df|^2 g$) with $|\alpha_0'(t)|_f$ being constant, the Euler-Lagrange equation for $L(\epsilon)$ is\[
\left.\frac{dL}{d\epsilon}\right|_{\epsilon=0}=\langle \tilde{\nabla}\psi_S(\alpha_0)-\frac{\alpha_0'}{|\alpha_0'|_f},\pd_{\epsilon}\alpha \rangle_f\Big\vert_{\substack{t=0\\ \epsilon=0}}=0.\]
Since $\pd_\epsilon \alpha (0,0)$ can be chosen arbitrarily, we have
\begin{equation}\label{ELequation}
\Big(\tilde{\nabla}\psi_S(\alpha_0)-\frac{\alpha_0'}{|\alpha_0'|_f}\Big)\Big|_{t=0} = 0.
\end{equation}
Such an equation restricts the possibility of the starting point $\alpha_0(0)$, namely, we have
$$
| \nabla \psi_S | = | \nabla f |,
$$
at $\alpha_0(0)$, or equivalently, $| \tilde{\nabla} \psi_S |_f = 1$.
\begin{definition}
$$
U_S := \{| \tilde{\nabla} \psi_S |_f =1 \} \cap W_S.
$$
\end{definition}
If $\alpha_0$ is a local extrema of $L$ with $\alpha_0(0) \in W_S$, it forces $\alpha_0(0) \in U_S$. To obtain nice properties of $U_S$, we are going to assume the following throughout the whole section.
\begin{assum}\label{assumption_psi_S}
We define $g_S:W_S\rightarrow\real_{\geq0}$ by $g_S = \psi_S -f$ and assume it to be a smooth Bott-Morse function in $W_S$ with critical point set $V_S$ which contains $x_S$.
\end{assum}

\begin{lemma}\label{hypersurface_H}
$U_S$ is a hypersurface containing $V_S$ if $dim(V_S)<dim(M)$ (we shrink $W_S$ if necessary). Otherwise, it is simply $V_S = W_S$.
\end{lemma}

\begin{proof}
Since we have $\nabla g_S \equiv 0$ on $V_S$ and hence $|\nabla\psi_S|=|\nabla f|$ on $V_S$. This gives $V_S \subset U_S$. Moreover, $U_S$ can be defined by the equation\[
\Phi(x)=2\langle\nabla f(x), \nabla g_S(x)\rangle + |\nabla g_S(x)|^2=0.\]
If $v\in T_p M$ where $p\in V_S$, then we have
\begin{eqnarray*}
\nabla_v\Phi(p)&=&2\Hess f(p)(v,\nabla g_S(p)) + 2\Hess g_S(p)(v,\nabla f(p)) + 2\Hess g_S(p)(v,\nabla g_S(p))\\
&=&2\Hess g_S(p)(v,\nabla f(p)),
\end{eqnarray*}
since $\nabla g_S(p)=0$ on $V_S$. As $g_S$ is a Bott-Morse function with critical set $V_S$, $\Hess g_S(p)$ is nondegenerate when it is restricted on the orthogonal complement of $T_pV_S$ in $T_pM$. Therefore, there exists $v$ such that $\nabla_v\Phi(p)\neq 0$.
\end{proof}

We are going to parametrize a neighborhood of $\gamma$ by $U_S \times (-\delta,T+ \delta)$ such that $U_S \times \{0\}\rightarrow M$ is an embedding and $x_S \times [0,T]$ is $\gamma$. $\psi_E$ is defined to be the coordinate function corresponding to the last variable. \\

Motivated from equation \eqref{ELequation}, we define a transversal vector field on $U_S$ which is the initial tangent vector for minimizer of $L$.
\begin{definition}We define a vector field $\nu \in \Gamma(U_S, T_M)$ transversal to $U_S$ (shrinking $W_S$ if necessary) by
\begin{equation}
\nu : = \frac{\nabla \psi_S}{| \nabla \psi_S |_f} = \tilde{\nabla} \psi_S.
\end{equation}
Notice that $\nu = \frac{\nabla f }{|\nabla f|_f} = \tilde{\nabla} f$ on $V_S$.
\end{definition}

It follows from the Euler-Lagrange equation \eqref{ELequation} that any local extrema $\alpha$ of  $L$ will have $\alpha(0) \in U_S$ and $\alpha'(0)  = \nu(\alpha(0))$.
For convenience, we assume that $\gamma$ is extended to gradient flow line defined on $(a,b)$ containing $[0,T]$.
\begin{definition}
We define a map
\begin{equation}\label{sigmamap}
\sigma :W_0 \subset U_S \times (a , b) \rightarrow M, 
\end{equation}
by 
$$\sigma(u, t) = \tilde{\exp}_{u}(t \nu),$$
where $W_0$ is a suitable neighborhood of $\gamma$ where the exponential map $\tilde{\exp}$ with respect to the Agmon Riemannian metric is well defined.
\end{definition}

\begin{lemma}\label{sigmadiff}
Restricting to a small open neighborhood of $\{x_S\} \times [0,b)$, $\sigma$ is a diffeomorphism onto its image containing $\gamma$.
\end{lemma}
This is achieved by showing there is no ``conjugate point'' along $\gamma(t)$ for certain type of geodesic family, and using the fact that $\gamma$ being a global minimizer of functional $L$. Lemma \ref{sigmadiff} enables  us to construct $\psi_E$ needed for WKB approximation in a neighborhood $U_S \times (-\delta,b)$ (take a small enough $\delta$ and shrink $U_S$ if necessary) of $\gamma$ where $\sigma$ is a differeomorphism.
\begin{definition} \label{psidefinition}We define $\psi_E$ on $\sigma(U_S \times (-\delta,b))$ by 
\begin{equation}
\psi_E (\sigma(u,t)) = \psi_S(u)+t,
\end{equation}
for $(u,t) \in U_S \times (-\delta,b)$.
\end{definition}

\subsection{Well-definedness of the phase function $\psi_E$}
We prove lemma \ref{sigmadiff} in this section for ensuring the well-definedness of $\psi_E$. We begin with the second variation formula of $L$. Assume $\alpha:(-\epsilon_0,\epsilon_0)\times[0,l]\rightarrow M$ is a family such that $\alpha_0(t)$ is arc-length parametrized geodesic (with respect to twisted metric $|df|^2 g$) satisfying the condition
$$
\Big(\tilde{\nabla} \psi_S(\alpha) -  \frac{\pd_t\alpha}{ | \pd_t\alpha |_f}\Big)\Big|_{\substack{t=0\\ \epsilon=0}} = 0.$$
From the first variation formula 
\[
\frac{dL}{d\epsilon} = \langle \tilde{\nabla} \psi_S(\alpha_\epsilon(0)), \pd_\epsilon\alpha_\epsilon(0) \rangle_f + \int_0^l \langle \tilde{\nabla}_t \pd_\epsilon\alpha , \frac{\pd_t\alpha}{| \pd_t\alpha |_f} \rangle_f\;dt,\]
we obtain
\begin{lemma}(Second variation formula)
\begin{multline}
\left.\frac{d^2L}{d\epsilon^2}\right|_{\epsilon=0}  = \langle \tilde{\nabla}_{\epsilon}\tilde{\nabla}\psi_S,\pd_\epsilon\alpha\rangle_f|_{t=0} + \langle \tilde{\nabla}\psi_S,\tilde{\nabla}_{\epsilon}\pd_\epsilon\alpha\rangle_f|_{t=0} + \left.\langle \tilde{\nabla}_\epsilon \pd_\epsilon\alpha , \pd_t\alpha \rangle_f\right|_0^l \\
+\int_0^l \langle \tilde{\nabla}_t \pd_\epsilon\alpha,\tilde{\nabla}_t \pd_\epsilon\alpha \rangle_f + \langle \tilde{R}(\pd_\epsilon\alpha, \pd_t\alpha) \pd_\epsilon\alpha,\pd_t\alpha \rangle_f - \langle \tilde{\nabla}_t \pd_\epsilon\alpha,\pd_t\alpha \rangle_f^2\;dt,
\end{multline}
where the right hand side is evaluated at $\epsilon=0$. Here $\tilde{R}$ is the curvature tensor with respect to $\langle \cdot,\cdot \rangle_f$. 
\end{lemma}

If we further impose the condition that $\pd_\epsilon\alpha(\epsilon, l) \equiv 0$ for all $\epsilon$, we have
\begin{multline}
\left.\frac{d^2L}{d\epsilon^2}\right|_{\epsilon=0}= \langle \tilde{\nabla}_\epsilon \tilde{\nabla} \psi_S , \pd_\epsilon\alpha \rangle_f|_{t=0} \\
+\int_0^l \langle \tilde{\nabla}_t \pd_\epsilon\alpha,\tilde{\nabla}_t \pd_\epsilon\alpha \rangle_f + \langle \tilde{R}(\pd_\epsilon\alpha, \pd_t\alpha) \pd_\epsilon\alpha,\pd_t\alpha \rangle_f - \langle \tilde{\nabla}_t \pd_\epsilon\alpha,\pd_t\alpha \rangle_f^2\;ds.
\end{multline}
Therefore we consider the bilinear form $I$ associated to the above quadratic form.
\begin{definition}
\begin{multline}
I(X,Y) =\tilde{\nabla}^2 \psi_S(X,Y)(0)+\int_0^l \langle \tilde{R}(X, \pd_t\alpha) Y,\pd_t\alpha \rangle_f dt \\
+ \int_0^l \langle \tilde{\nabla}_t X -\langle \tilde{\nabla}_t X,\pd_t\alpha \rangle_f \pd_t\alpha ,\tilde{\nabla}_t Y - \langle \tilde{\nabla}_t Y,\pd_t\alpha \rangle_f \pd_t\alpha\rangle_f dt,
\end{multline}
for vector fields $X,Y$ on $\alpha_0$, $X(l)=0=Y(l)$. 
\end{definition}
For any such vector field $X$, we can find a family of curves $\alpha_\epsilon$ satisfying the assumption $\pd_\epsilon\alpha(\epsilon, l) \equiv 0$ with $\pd_\epsilon \alpha = X$. The same holds for piecewise smooth vector field with the same initial condition.\\

\begin{proof}[Proof of lemma \ref{sigmadiff}]
The proof depends on the fact that $\gamma$ is an absolute minimum of $L$ among the set of paths $\alpha$ in $M \setminus Crit(f)$ with $\alpha(0) \in W_S$, and contradiction will occur if the differential of $\sigma$ is singular along $\{x_S\} \times [0,b)$. This argument is a modification of the standard one of geodesic beyond conjugate point is never length minimizing.

First, we notice that $d\sigma_{(x_S,t_0)} (0, \dd{t}) = \gamma'(t)$ for a fixed $t_0 \in [0,b)$. We have to compute $d\sigma_{(x_S,t_0)}(v,0)$ for arbitrary $(v,0) \in T_{(x_S,t_0)}(W_0)$. We claim that $\pd_\epsilon\alpha(0,t_0)$ will never be parallel to $\pd_t\alpha(0,t_0)$ for $v\neq0$.\\

Taking a curve $\beta(\epsilon)$ in $U_S$ with $\beta(0)=x_S$ and $\beta'(0)=v$, we can construct a family of arc-length parametrized geodesics $\alpha_\epsilon$ by taking exponential map\[ 
\alpha(\epsilon,t) = \tilde{\exp}_{\beta(\epsilon)}(t\nu).\]
We have $\pd_\epsilon\alpha(0,t) = d\sigma_{(x_S,t)}(v,0)$ with $\pd_\epsilon\alpha$ being a Jacobi field on $\alpha_0$. Suppose the contrary that $\pd_\epsilon\alpha(0,t_0) = c \pd_t \alpha(0,t_0)$ for some constant $c$, then we must have $\tilde{\nabla}_t \pd_\epsilon\alpha(0,t_0) \neq 0$, otherwise we must have $\pd_\epsilon\alpha \equiv c \pd_t\alpha$ which contradicts $v \neq 0 $. \\

We claim that there is a path from $U_S$ to the point $\sigma(v_S,t_0+\delta)$ which gives a smaller value of $L$ comparing to the gradient flow line $\gamma$ from $v_S$ to the point $\sigma(v_S,t_0+\delta)$.  We will denote $l = t_0+\delta$ to fit our previous discussion.\\


We construct the path by defining a variational vector field $Y_\eta$ on $\gamma$, depending on a small $\eta>0$ to be fixed. We take a vector field $Z(t)$ such that $Z(0)=0$, $Z(l) = 0$, $\langle Z, \pd_t \rangle_f \equiv 0$ on $[t_0,l]$ and $ Z(t_0) = - \tilde{\nabla}_t \pd_\epsilon(0,t_0)$. We define a piecewise smooth vector field\[
Y_\eta(t) := \left\{ \begin{array}{cc}
\pd_\epsilon\alpha + \eta Z & \textrm{if }t \in [0,t_0], \\ \chi \langle \pd_\epsilon\alpha ,\pd_t\alpha \rangle_f \pd_t\alpha + \eta Z & \textrm{if }t \in [t_0,l],
\end{array} \right.\]
where $\chi$ is a cutoff function on $[t_0,l]$ with $\chi(t_0) = 1$ and $\chi=0$ in a neighborhood of $l$. Notice that $\tilde{\nabla}_t \langle \pd_\epsilon \alpha , \pd_t \alpha \rangle_f  =0 $ from the fact that $| \pd_t \alpha|_f \equiv 1$. A direct computation shows
\[
I(Y_\eta,Y_\eta) = -2\eta | \tilde{\nabla}_t \pd_\epsilon\alpha(0,t_0) |^2_f + 2\eta^2 I(Z,Z).
\]
We have $I(Y_\eta,Y_\eta) <0$ for $\eta$ small enough. \\

By taking the family of curves $\beta_\epsilon$ corresponding to $Y_\eta$, we obtain\[
\left.\frac{d^2L_\beta}{d\epsilon^2}\right|_{\epsilon = 0} < 0,\]
where $L_\beta(\epsilon)=L(\beta(\epsilon))$. For small enough $\epsilon$, $\beta_\epsilon(t)$ will be a curve from $U_S$ to $\sigma(0,l)$ which gives a smaller value of $L$ comparing to $\beta_0= \gamma$. This is impossible because we have\[
L_\beta(\epsilon) \geq f(\sigma(0,l))\]
and the lower bound is attained at $\gamma$.\\

As a conclusion, we can show that $\sigma$ gives a local diffeomorphism onto its image by shrinking $W_0$ if necessary. Therefore it is injective in a contractible neighborhood of the gradient flow line $\gamma$. 
\end{proof}

Under the identification $\sigma$, we use the coordinate $u_1,\dots,u_{n-1}$ for $U_S$ and use $(u_1,\dots,u_{n-1},t)$, or simply $(u,t)$, as coordinates for image of $W_0$ under $\sigma$. By shrinking $W_0$ if necessary, we assume that $W_0$ is a coordinate chart through the map $\sigma$. This justifies definition \ref{psidefinition} of $\psi_E$ being a smooth function on $\sigma(W_0)\subset M$.


\subsection{Properties of $\psi_E$}
We are going to study the first and second derivatives of $\psi_E$ which is necessary for WKB approximation in the equation (\ref{homotopyeq}). We define\[
V_E:=\sigma((V_S \times (-\delta,b))\cap W_0) \subset \sigma(W_0)\]
as shown in the following picture.\\


\begin{figure}[h]
\centering
\includegraphics[scale=0.4]{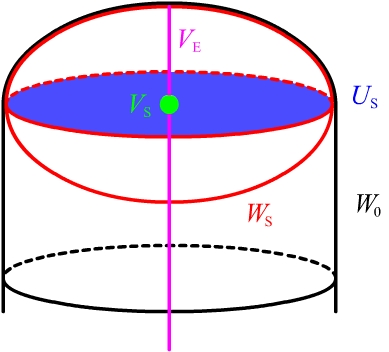}
\end{figure}

\begin{lemma}\label{properties_of_psi}
In $W_0$, we have \[
\tilde{\nabla} \psi_E = d\sigma_* \dd{t}.\]
In particular, we have $\nabla \psi_E = \nabla f$ on $V_E$ and $| \nabla \psi_E| =  | \nabla f|$. 
\end{lemma}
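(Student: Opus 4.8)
The plan is to identify $\tilde{\nabla}\psi_E$ directly with the geodesic velocity field $X:=d\sigma_*\frac{\partial}{\partial t}$ by a Gauss-lemma computation for the Agmon metric. I would work throughout in the coordinates $(u_1,\dots,u_{n-1},t)$ on $\sigma(W_0)$, in which $X$ is the coordinate field $\frac{\partial}{\partial t}$ and $\psi_E$ is affine in $t$: from $\psi_E(\sigma(u,t))=\psi_S(u)+t$ we get $\frac{\partial \psi_E}{\partial t}\equiv 1$ and $\frac{\partial \psi_E}{\partial u_i}=\frac{\partial \psi_S}{\partial u_i}$, where on the right $\psi_S$ is viewed as the $t$-independent extension of $\psi_S|_{U_S}$. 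Since $\|X\|_f=1$ (geodesics have constant speed and $\|\nu\|_f=1$), the assertion $\tilde{\nabla}\psi_E=X$ is equivalent to the $1$-form identity $d\psi_E=\langle X,\cdot\rangle_f$, which I would check by pairing both sides with $\frac{\partial}{\partial t}$ and with $\frac{\partial}{\partial u_i}$.

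The key steps are: (i) $\langle X,X\rangle_f\equiv 1$, matching $\frac{\partial\psi_E}{\partial t}$; (ii) $\langle X,\frac{\partial}{\partial u_i}\rangle_f$ is constant along each geodesic, since $\frac{\partial}{\partial t}$ and $\frac{\partial}{\partial u_i}$ commute so that $\tilde{\nabla}_{\partial_t}\frac{\partial}{\partial u_i}=\tilde{\nabla}_{\partial_{u_i}}\frac{\partial}{\partial t}$, and with $\tilde{\nabla}_{\partial_t}\frac{\partial}{\partial t}=0$ this gives $\frac{d}{dt}\langle X,\frac{\partial}{\partial u_i}\rangle_f=\tfrac12\frac{\partial}{\partial u_i}\langle X,X\rangle_f=0$; (iii) evaluating at $t=0$, where $\sigma(\cdot,0)$ is the inclusion $U_S\hookrightarrow M$, one has $\frac{\partial}{\partial u_i}\big|_{t=0}\in TU_S$ and $X|_{t=0}=\nu=\tilde{\nabla}\psi_S$ on $U_S$ (using $U_S=\{\|\tilde{\nabla}\psi_S\|_f=1\}$), so
\[
\langle X,\tfrac{\partial}{\partial u_i}\rangle_f\big|_{t=0}=\langle\tilde{\nabla}\psi_S,\tfrac{\partial}{\partial u_i}\rangle_f=d\psi_S\!\left(\tfrac{\partial}{\partial u_i}\right)=\frac{\partial\psi_S}{\partial u_i},
\]
matching $\frac{\partial\psi_E}{\partial u_i}$. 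Combining the two pairings gives $d\psi_E=\langle X,\cdot\rangle_f$, hence $\tilde{\nabla}\psi_E=X=d\sigma_*\frac{\partial}{\partial t}$.

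For the two ``in particular'' statements: from $\tilde{\nabla}\psi_E=X$ and $\|X\|_f\equiv 1$ on $\sigma(W_0)$, together with the elementary identity $\|\tilde{\nabla}h\|_f=|\nabla h|/|\nabla f|$ and $\|\tilde{\nabla}f\|_f\equiv 1$, I obtain $|\nabla\psi_E|=|\nabla f|$, i.e. $\|\nabla\psi_E\|=\|\nabla f\|$. To get $\nabla\psi_E=\nabla f$ on $V_E$, note that for $p\in V_S$ one has $\nabla g_S(p)=0$, hence $\nu(p)=\tilde{\nabla}\psi_S(p)=\tilde{\nabla}f(p)$; since $f$ solves the eikonal equation $\|\tilde{\nabla}f\|_f\equiv 1$, symmetry of $\tilde{\nabla}^2 f$ yields $\tilde{\nabla}_{\tilde{\nabla}f}\tilde{\nabla}f=\tfrac12\tilde{\nabla}\|\tilde{\nabla}f\|_f^2=0$, so the integral curves of $\tilde{\nabla}f$ are Agmon geodesics. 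By uniqueness of geodesics, $t\mapsto\sigma(p,t)$ is precisely the integral curve of $\tilde{\nabla}f$ through $p$, so $X=\tilde{\nabla}f$ along it; as $V_E=\bigcup_t\sigma_t(V_S)\cap W_0$, this gives $\tilde{\nabla}\psi_E=\tilde{\nabla}f$ on $V_E$, whence $\nabla\psi_E=\nabla f$ there.

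I expect the core computation to be entirely routine once the $(u,t)$-coordinates are fixed; the only points requiring care are the consistent bookkeeping between the background metric $g_0$ and the Agmon metric $|\nabla f|^2g_0$ (normalization of $\nu$, the identity $\|\tilde{\nabla}h\|_f=|\nabla h|/|\nabla f|$, the fact that $\nu=\tilde{\nabla}\psi_S$ exactly on $U_S$), and the observation used for $V_E$ that the chosen geodesic field restricts on $V_S$ to the Agmon-normalized gradient field of the eikonal function $f$. There is no genuine obstacle beyond this.
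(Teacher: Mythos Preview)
Your proposal is correct and is essentially the same argument as the paper's, just packaged differently: the paper differentiates the functional $L(\epsilon)=\psi_E(\sigma(\beta(\epsilon),t))$ and reads off $\langle\tilde{\nabla}\psi_E,\partial_\epsilon\alpha\rangle_f=\langle\partial_t\alpha,\partial_\epsilon\alpha\rangle_f$ from the first-variation formula for arc length, whereas you carry out the equivalent Gauss-lemma computation directly in the $(u,t)$-coordinates. The treatment of the two ``in particular'' statements (via the eikonal identity $\|\tilde{\nabla}f\|_f\equiv 1$ and uniqueness of geodesics) also matches the paper's one-line remark that flow lines of $\nabla f$ are geodesics after reparametrization.
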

\begin{proof}
We first consider the subset $t \in [0,b)$ in $W_0$. Let $\beta(\epsilon)$ be a curve in $U_S$ such that $\beta(0)=u$ and\[
\alpha(\epsilon,t)=\exp_{\beta(\epsilon)}(t\nu) = \sigma(\beta(\epsilon),t).\]
Notice that we have $\psi_E(\alpha_\epsilon(t)) = L(\alpha_\epsilon|_{[0,t]})$. Applying the first variation formula, we have
\begin{eqnarray*}
\langle \tilde{\nabla}\psi_E( \alpha_\epsilon(t)),\pd_\epsilon \alpha_\epsilon(t)\rangle_f\big|_{\epsilon=0} &=& \left.\frac{dL}{d\epsilon}\right|_{\epsilon=0}\\
&=& \langle \pd_t \alpha(0,t),\pd_\epsilon \alpha(0,t)\rangle_f.
\end{eqnarray*}
As $\pd_\epsilon \alpha(0,t)$ can be chosen arbitrarily, we get\[
\tilde{\nabla}\psi_E(u,t) = \pd_t\alpha(0,t) = d\sigma_{(u,t)}\dd{t}.\]
The same argument works for $t\in (-\delta,0]$ by taking 
\[
L(\alpha_\epsilon|_{[t,0]})=\psi_S(\alpha_\epsilon(0))+\int_t^0|\pd_t \alpha_\epsilon|_f dt.\]

Furthermore, we have $|\tilde{\nabla}\psi_E(u,t)|^2_f=|d\sigma_{(u,t)}\dd{t}|^2_f=1$ which gives $|\nabla\psi_E(u,t)|=|\nabla f|$. Finally, as we know $\nabla \psi_S = \nabla f$ on $V_S$ and flow lines of $\nabla f$ are geodesics after reparametrizations, we get $\nabla \psi_E = \nabla f$ on $V_E$.
\end{proof}

We now consider the second derivatives of $g_E = \psi_E-f$.
\begin{lemma}\label{psi_f}
By choosing a small enough $W_0$, we have
\begin{enumerate}
\item $g_E \geq 0$ and
\item $g_E$ is a Bott-Morse function with critical set $V_E = \{ g_E = 0\}$.
\end{enumerate}
\end{lemma}

\begin{proof}
The previous lemma implies that $\nabla g_E = 0$ on $V_E$. We are going to show $\nabla^2 g_E$ is positive definite in the normal bundle of $V_E$. Fixing any $t \in [0,b)$, we consider the submanifold $U_t = \sigma (U_S \times \{t\} \cap W_0)$. There is an isomorphism between the normal bundle of $V_t = \sigma(V_S \times \{t\} \cap W_0)$ in $U_t$ and the normal bundle of $V_E$ in $W_0$. Therefore we restrict $g_E$ on $U_t$ and consider its Hessian. 

We abuse the notations and write $ u : W_0 \rightarrow U_S$ as the projection map. We take $h = g_E - g_S \circ u$, then $h \geq 0 $ on $U_t$ by definition of $\psi_E$ and $\nabla h = 0 = h$ on $V_t$. Therefore we have $h$ is positive semi-definite on the normal bundle of $V_t$ in $U_t$. Moreover, we have 
$\nabla^2 (g_S \circ u) =( \nabla^2 g_S ) \circ u$ on $V_S$
being positive definite in the normal bundle. 

By choosing sufficiently small $\delta$, we can assume that $\nabla^2 g_E >0$ along $V_E$ and hence the result follows.
\end{proof}

Next, we consider the second order derivatives for $\Psi= \psi_E - \psi_S = g_E - g_S$ defined on $W_S$.

\begin{lemma}\label{psi_f_g}
By choosing small enough neighborhood $W_S$ of $v_S$ if necessary, we have
\begin{enumerate}
\item $\Psi \leq 0$ on $W_S$ and
\item $\Psi$ is a Bott-Morse function with critical set $U_S = \{ \Psi = 0\}\subset W_S$.
\end{enumerate}
\end{lemma}

\begin{proof}
We first have $\nabla \Psi = 0$ on $U_S$ because $\nabla \psi_E = \nabla \psi_S$ on $U_S$. If we consider $\nabla^2 \Psi (\dd{t},\dd{t})$ on $V_S$, then we have $\nabla^2 g_E (\dd{t},\dd{t}) =0$ and $\nabla^2 g_S ( \dd{t},\dd{t}) >0$. Therefore, there exists an neighborhood $U$ of $V_S$ in $U_S$ so that\[
\Hess\Psi(x)(\dd{t},\dd{t}) < 0\]
for all $x\in U$. Choosing $W_S$ small enough will achieve the desired result.
\end{proof}
\begin{remark}
We can extend the function $\Psi$ from $W_S$ to $W_0$ to be a non-negative function with critical set $U_S$ which is also an absolute maximum. This is for our convenience in later arguments. 
\end{remark}


\subsection{The WKB iteration}\label{WKB_method}
After knowing these properties of $\psi_E$, we will describe the iteration procedure to define $\omega_{E,i}$ inductively. 

First, by lemma \ref{properties_of_psi}, we have $|df|^2=|d\psi_E|^2$ and hence the expansion
\begin{eqnarray*}
e^{\lam \psi^{}_{\scalebox{.7}{$\scriptscriptstyle E$}}} \Delta_f e^{-\lam \psi^{}_{\scalebox{.7}{$\scriptscriptstyle E$}}} &=& \Delta + \lam M_f+\lam(\lieg{\psi^{}_{\scalebox{.7}{$\scriptscriptstyle E$}}}-\lieg{\psi^{}_{\scalebox{.7}{$\scriptscriptstyle E$}}}^*)\\
&=& \Delta + \lam(2\lieg{\psi^{}_{\scalebox{.7}{$\scriptscriptstyle E$}}}-M_{g^{}_{\scalebox{.7}{$\scriptscriptstyle E$}}}),
\end{eqnarray*}
where $M_{g^{}_{\scalebox{.7}{$\scriptscriptstyle E$}}}=\lieg{g^{}_{\scalebox{.7}{$\scriptscriptstyle E$}}}+\lieg{g^{}_{\scalebox{.7}{$\scriptscriptstyle E$}}}^*$. Following \cite{HelSj4}, we let
\begin{equation*}
\ot=2\lieg{\psi^{}_{\scalebox{.7}{$\scriptscriptstyle E$}}}-M_{g^{}_{\scalebox{.7}{$\scriptscriptstyle E$}}},
\end{equation*}
and consider the following equation
\begin{equation*}
(\Delta + \ot \lam )(\mu_0(\lam)+\mu_1(\lam)+\cdots) = e^{\lam\Psi}\nu,
\end{equation*}
order by order in $\lam$ where $\mu_i(\lam)$ is a function (depending on $\lam$). We often write $\mu_i$ to simplify our notations. The first equation to be solved is
\begin{equation}\label{1st_eq}
\lam \ot \mu_0(\lam) = e^{\lam \Psi}\nu.
\end{equation}

In order to solve the above equation involving $\mathcal{L}_{\nabla \psi^{}_{\scalebox{.7}{$\scriptscriptstyle E$}}}$, we need a map $\tau$ describing the flow of $\nabla \psi_E$. It is given by renormalising $\sigma$ such that $d\tau_*(\dd{t}) = \nabla \psi_E$ and is of the form
\begin{equation}\label{taumap}
\tau : W \subset U_S \times (-\infty,+\infty)\rightarrow M,
\end{equation}
with the same image as $\sigma$. We can also assume that $W \cap \{u\}\times \real$ is a connected open interval.

\begin{notation}\label{localcoordinates}
We use $(u_1,\dots,u_{n-1},t)$ as coordinates of $\tau(W)$ from now on. For simplicity, we also let $u_n = t$ and $\grave{u} = (u_1,\dots,u_{n-1})$.
\end{notation}

For the iteration process, we focus on
\begin{equation*}
\Omega_0^*(W) = \{ \beta \in \Omega^*(W)|\;\;\overline{supp(\beta) }\cap (U_S \times (-\infty ,t_0])\;compact\;for\;all\;t_0 \},
\end{equation*}
for the definition of the following integral operator.
\begin{definition} We let
$I:\Omega^*_{0}(W)\rightarrow\Omega^*_{0}(W)$ given by 
\begin{equation}\label{Ioperator}
I(\phi):= \int_{-\infty}^0 e^{\int_s^0 \frac{1}{2}\tau_{\epsilon}^*(M_{g_E})\; d\epsilon}\; \tau_s^*(\phi) ds,
\end{equation}
where $\tau_s(u,t) = \tau(u,t+s)$ is the flow of $\nabla \psi_E$ for time $s$.
\end{definition}

To solve \eqref{1st_eq}, we put
\begin{equation}\label{omega_0}
\mu_0= \frac{1}{2\lam} I(e^{\lam \Psi}\nu).
\end{equation}
Then it can be checked that $\mu_0$ is the solution to \eqref{1st_eq}. The second equation to be solved is 
\begin{equation}
\lam \ot\mu_1 = - \Delta\mu_0.
\end{equation}
Again, we put\[
\mu_1= -\frac{1}{2\lam} I(\Delta\mu_0).\]
In general, we have the transport equation  for $l\geq0$
\begin{equation}\label{trans_eq}
\ot\mu_{l+1} = -\lam^{-1}\Delta\mu_l.
\end{equation}
This gives
\begin{equation}\label{wkbiteration}
\mu_{l+1}= -\frac{1}{2\lam} I(\Delta\mu_l).
\end{equation}
as solutions in $W$.

\subsection{Estimate of the WKB iteration}\label{estimate}
In this section, we are going to obtain norm estimates for $\mu_l$'s.  We consider terms appearing in the iteration which are essentially of the form
\begin{equation}\label{gen_term}
I^j\Big(e^{\lam \Psi}(\prod_{\alpha}\nabla_{\alpha}\Psi)\beta\Big)
\end{equation}
with $j\geq 0$ and $\beta\in\Omega^*_0(W)$, where $I^j$ is the composition of $I$ for $j$ times. Here each $\alpha=(\alpha_{1},\ldots,\alpha_{n})$ is a multi-index such that\[
\nabla_{\alpha}\Psi=\nabla_{\dd{u_1}}^{\alpha_1} \cdots \nabla_{\dd{u_{n-1}}}^{\alpha_{n-1}}\nabla_{\dd{u_n}}^{\alpha_n} \Psi.\]
With
\[m(\alpha):=\max \{ 0,2-\alpha_n \},\]
we have
\begin{equation}\label{order_at_zero}
\nabla^j (\prod_\alpha \nabla_{\alpha}\Psi )|_{U_S} \equiv 0,
\end{equation}
for $j < \sum_\alpha m(\alpha)$ from lemma \ref{psi_f_g}. 

\begin{remark}
Different choices of order of taking differentiation in definition of $\nabla_{\alpha}$ will result in a difference involving the curvature of $(M,g)$, however, the order of vanishing in equation \eqref{order_at_zero} remains unchanged and hence the following estimates hold for any such choice.
\end{remark}

The counting of vanishing order along $U_S$ is needed for applying the following semi-classical approximation lemma \ref{stat_phase_exp}, appearing in \cite{DiSj}.
\begin{lemma}\label{stat_phase_exp}
Let $U\subset \real^n$ be an open neighborhood of $0$ with coordinates $x_1,\ldots,x_n$. Let $\varphi:U\rightarrow\real_{\geq0}$ be a Morse function with unique minimum $\varphi(0)=0$ in $U$. Let $\tilde{x}_1,\ldots,\tilde{x}_n$ be a Morse coordinates near $0$ such that\[
\varphi(x)=\frac{1}{2}(\tilde{x}_1^2+\cdots+\tilde{x}_n^2).\]
For every compact subset $K\subset U$, there exists a constant $C=C_{K,N}$ such that for every $u\in C^{\infty}(U)$ with $\supp(u) \subset K$, we have
\begin{eqnarray}\label{stat_ineq}
&& \left|(\int_K e^{-\lam \varphi(x)} u ) - (\frac{\lam}{2\pi})^{n/2}\Big(\sum_{k=0}^{N-1} \frac{ \lam^{-k}}{2^kk!} \tilde{\Delta}^k(\frac{u}{\Im})(0) \Big)\right|\nonumber\\
&\leq& C \lam^{-n/2-N} \sum_{|\alpha|\leq 2N+n+1} \sup|\pd^{\alpha}u|,
\end{eqnarray}
where\[
\tilde{\Delta}=\sum\ppd{\tilde{x}_j},\qquad\Im=\pm\det(\frac{d\tilde{x}}{dx}),\]
and $\Im(0)=(\det\Hess\varphi(0))^{1/2}$.

In particular, if $u$ vanishes at $0$ up to order $L$, then we can take $N=\lceil L/2 \rceil$ and get\[
 \left| \int_K e^{-\lam \varphi(x)} u \right| \leq C \lam^{-n/2-\lceil L/2 \rceil}.\]
\end{lemma}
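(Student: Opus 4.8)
The plan is to reduce the left-hand side to a pure Gaussian integral on $\real^n$ and then carry out the classical Laplace expansion, tracking the error in $C^0$-norms of $u$. First I would localize near $0$: since $\varphi\geq 0$ has its unique zero at the origin, for any $\delta>0$ small enough that $\overline{B(0,2\delta)}\subset U$ carries the Morse coordinates $\tilde x$, the continuous function $\varphi$ is bounded below by some $c_\delta>0$ on the compact set $K\setminus B(0,\delta)$. Taking a cutoff $\chi\in C_c^\infty(B(0,2\delta))$ with $\chi\equiv 1$ on $B(0,\delta)$, the piece $(1-\chi)u$ contributes at most $e^{-c_\delta/\he}\vol(K)\sup|u|=\mathcal{O}(\he^\infty)$, which the stated error absorbs; so it suffices to treat $\int_K e^{-\varphi/\he}\chi u\,dx$.

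Next I would change variables by $y=\tilde x(x)$ on $\supp\chi$, which gives
\[
\int_K e^{-\varphi/\he}\chi u\,dx=\int_{\real^n}e^{-|y|^2/(2\he)}\,v(y)\,dy,\qquad v:=\bigl((\chi u)\circ\tilde x^{-1}\bigr)\,\bigl|\det(\partial x/\partial y)\bigr|\in C_c^\infty(\real^n);
\]
near $0$ this $v$ is exactly $u/\Im$ read in the $\tilde x$-variables, the sign of $\Im$ being fixed so that $\Im(0)=(\det\Hess\varphi(0))^{1/2}>0$, so $\tilde\Delta^k v(0)=\tilde\Delta^k(u/\Im)(0)$ for the operator $\tilde\Delta=\sum_j\partial_{y_j}^2$. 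Then I would Taylor-expand $v$ at $0$ to order $2N$, writing $v(y)=\sum_{|\beta|<2N}\tfrac{\partial^\beta v(0)}{\beta!}y^\beta+R_N(y)$ with $|R_N(y)|\leq C_n|y|^{2N}\sum_{|\beta|=2N}\sup|\partial^\beta v|$, and integrate the polynomial part against the Gaussian by means of the identity
\[
\frac{1}{(2\pi\he)^{n/2}}\int_{\real^n}e^{-|y|^2/(2\he)}P(y)\,dy=\bigl(e^{\frac{\he}{2}\tilde\Delta}P\bigr)(0)=\sum_{k\geq0}\frac{\he^k}{2^kk!}\tilde\Delta^kP(0)
\]
for polynomials $P$, checked on monomials via $\int_\real e^{-y^2/(2\he)}y^{2m}\,dy=(2\pi\he)^{1/2}(2m-1)!!\,\he^m$ and vanishing of odd moments. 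Since $\tilde\Delta^k$ sends a degree-$2k$ monomial to a constant and annihilates monomials of lower degree, only the terms $k\leq N-1$ survive at $y=0$ and these see only the degree-$(\leq 2N-2)$ Taylor part of $v$, so the polynomial part reproduces precisely the claimed main term $(2\pi\he)^{n/2}\sum_{k=0}^{N-1}\frac{\he^k}{2^kk!}\tilde\Delta^k(u/\Im)(0)$. The remainder obeys
\[
\Bigl|\int_{\real^n}e^{-|y|^2/(2\he)}R_N(y)\,dy\Bigr|\leq C\sum_{|\beta|=2N}\sup|\partial^\beta v|\int_{\real^n}e^{-|y|^2/(2\he)}|y|^{2N}\,dy\leq C'\he^{n/2+N}\sum_{|\beta|\leq 2N}\sup|\partial^\beta v|,
\]
and the chain rule together with the smoothness of $\tilde x,\tilde x^{-1},\chi$ re-expresses the right-hand side in terms of $\sum_{|\alpha|\leq 2N+n+1}\sup|\partial^\alpha u|$, with $C'$ depending on $K$, $N$ and finitely many derivatives of $\varphi$ (the range $2N+n+1$ is generous — $2N$ already suffices, the extra room merely allowing a passage through Sobolev norms if preferred). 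This establishes the main inequality. The ``in particular'' then follows since $u/\Im$ vanishes at $0$ to the same order $L$ as $u$ (because $\Im(0)\neq0$), whence $\tilde\Delta^k v(0)=0$ for $k\leq\lceil L/2\rceil-1$, so taking $N=\lceil L/2\rceil$ kills the entire main sum and leaves only $\mathcal{O}(\he^{n/2+\lceil L/2\rceil})$.

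The only points needing care are that the Morse coordinates furnished by the classical Morse lemma are merely local — which is exactly what the cutoff in the first step handles — and the combinatorial bookkeeping in the third step, namely matching the Gaussian moments of the Taylor polynomial with the operator $e^{\frac{\he}{2}\tilde\Delta}$ and counting precisely how many derivatives of $u$ enter the final constant. Neither is a genuine obstacle: this is the standard Laplace-method estimate, here stated with $C^0$ control of the error.
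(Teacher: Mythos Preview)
Your argument is correct and is the standard Laplace-method derivation: localize with a cutoff, pass to Morse coordinates, Taylor-expand, integrate the polynomial part against the Gaussian via the heat-kernel identity $\int e^{-|y|^2/(2\he)}P\,dy=(2\pi\he)^{n/2}(e^{\he\tilde\Delta/2}P)(0)$, and bound the remainder by Gaussian moments of $|y|^{2N}$. The bookkeeping (why only $k\le N-1$ contributes, why the remainder is $\mathcal O(\he^{n/2+N})$, and how derivatives of $v$ convert to derivatives of $u$) is all handled correctly; as you note, $2N$ derivatives of $u$ already suffice and the stated order $2N+n+1$ is simply generous.

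As for comparison with the paper: the paper does not actually prove this lemma. It is stated as a standard semi-classical approximation result and invoked repeatedly (in the proof of Lemma~\ref{estimate_I}, in Section~\ref{L2approx}, and in Section~\ref{leadingrelation}) without any argument supplied. So there is no ``paper's own proof'' to compare against; your write-up fills in exactly what the authors take for granted.
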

From the above, we obtain the following lemma.

\begin{lemma}\label{gmnormestimate}
Let $L_{\grave{u}}$ be the line interval along $t$ direction with fixed $\grave{u}$ coordinates, we have the norm estimate
\begin{equation*}
\left( \int_{L_{\grave{u}}} | \nabla_\alpha  (e^{\lam \Psi})|^{2^k} \right)^\frac{1}{2^k} \leq  C_{\alpha,k} \lam^{\frac{\alpha_n}{2} - \frac{1}{2^{k+1}}}
\end{equation*}
for any multi-index $\alpha$ and $k\in \inte_{\geq 0 }$.
\end{lemma} 

Motivated by the above lemma, we consider a filtration
$$
\dots \subset F^{-s}\subset \dots F^{-1}\subset F^0 \subset F^1 \subset F^2\subset \dots \subset F^s \subset \dots \subset  \Omega^*_0(W)
$$
of the space of differential forms on $\Omega^*_0(W)$ which is defined as follows. 

\begin{definition}\label{filtrationdefinition}
$\phi \in \Omega^*_0(W)$ is in $F^s$ if for any compact subset $K \subset W$ and integers $j,k\in \inte_+$, we have
$$
\| \nabla_\alpha \phi \|_{L^{2^k}(K\cap L)} \leq C_{\alpha,k,K} \lam^{\frac{\alpha_n+s}{2} - \frac{1}{2^{k+1}}},
$$
for any line $L=L_{\grave{u}}$.
\end{definition}

The Lemma \ref{gmnormestimate} simply means $e^{\lam \Psi} \in F^0$.

\begin{prop}
We have $\nabla F^s \subset F^{s+1}$ and $F^s \cdot F^r \subset F^{r+s}$, where $\cdot$ denotes the wedge product of forms.
\end{prop}
\begin{proof}
The first property is trivial. For the relation $F^s \cdot F^r \subset F^{r+s}$, we fix $j \in \inte_+$ and a compact subset $K$. For $\phi \in F^r$ and $\psi \in F^s$, we first observe that
$$
\nabla_\alpha (\phi \wedge \psi) = \sum_{\beta + \theta = \alpha }(\nabla_{\beta} \phi) \wedge (\nabla_{\theta} \psi).
$$
Then the H\"older inequality implies that
\begin{eqnarray*}
\| (\nabla
_{\beta} \phi) \wedge (\nabla_\theta \psi) \|_{L^{2^k}(K \cap L)} &\leq& C \| \nabla_\beta \phi \|_{L^{2^{k+1}}(K \cap L)} \| \nabla_\theta \psi \|_{L^{2^{k+1}}(K \cap L)}\\
& \leq & C\lam^{\frac{\beta_n+s}{2} -\frac{1}{2^{k+2}}} \cdot \lam^{\frac{\theta_n+r}{2} - \frac{1}{2^{k+2}}}\\
& \leq & C \lam^{\frac{\alpha_n+r+s}{2} - \frac{1}{2^{k+1}}}
\end{eqnarray*}
and the result follows.
\end{proof}

\begin{lemma}\label{filtrationlemma}
For $\phi \in F^{s}$, we have
\begin{eqnarray*}
I(\phi) & \in & F^s,\\
 \Delta I(\phi)& \in &F^{s+1}.
\end{eqnarray*}
\end{lemma}

\begin{proof}
To simplify the notations, we only prove the statement for functions as we can fix a basis (independent of $\lam$) for differential forms in $W$, and estimate the coefficient functions. The Christoffel symbols appearing in differentiating the basis will be independent of $\lam$ and not affecting the following estimates. For the same reason, let us simply pick a flat metric in $u_i$'s coordinates for simplicity. In that case, we can write $\Delta = \sum_i \nabla_i^2$.

We first consider the operator $\nabla_n^2$, and we will have $2\nabla_n I(\phi) = M_{g^{}_{\scalebox{.7}{$\scriptscriptstyle E$}}}\phi$ where $M_{g^{}_{\scalebox{.7}{$\scriptscriptstyle E$}}}$ is acting as scalar multiplication by function. Therefore we have 
$$
\|\nabla_{\alpha} (\nabla_n^2 I(\phi))\|_{L^{2^k}(K \cap L)} = \| \nabla_\alpha \nabla_n(M_{g^{}_{\scalebox{.7}{$\scriptscriptstyle E$}}} \phi) \|_{L^{2^k}(K \cap L)} \leq C_{\alpha,k,K} \lam^{\frac{\alpha_n+s+1}{2}-\frac{1}{2^{k+1}}}.
$$
This implies $(\nabla_n^2 I(\phi)) \in F^{s+1}$. 

Next, we consider the operator $\nabla_i^2$ for $i<n$. Fixing a multi-index $\alpha$ and using the result $I(\phi) \in F^s$, we have
$$
\|\nabla_\alpha \nabla_i^2 (I \phi)\|_{L^{2^k}(K \cap L)} \leq C_{\alpha,k,K} \lam^{\frac{\alpha_n+s}{2}-\frac{1}{2^{k+1}}},
$$
which gives $\nabla_i^2 (I \phi) \in F^s \subset F^{s+1}$.\\

It remains to show that $I(\phi) \in F^s$ which requires estimates of the term $\nabla_\alpha I(\phi)$. There are two cases to be considered, $\alpha_n \neq 0$ and $\alpha_n = 0$. If $\alpha_n \neq 0$, we can cancel the integral operator with one of the $\nabla_n$, which gives
$$
\|\nabla_\alpha I(\phi)\|_{L^{2^k}(K \cap L)} = \half \|\nabla_{\hat{\alpha}} (M_{g^{}_{\scalebox{.7}{$\scriptscriptstyle E$}}} \phi)\|_{L^{2^k}(K \cap L)} \leq  C_{\alpha,k,K} \lam^{\frac{\alpha_n+s-1}{2}-\frac{1}{2^{k+1}}},
$$
where $\hat{\alpha}$ refers to the multi-index by letting $\hat{\alpha}_n = \alpha_n -1$.

If $\alpha_n = 0$, we can commute all the $\nabla_{\alpha}$ with the integral operator $I$. We let $Q(\grave{u},t,s)=e^{\int_s^0 \frac{1}{2}\tau_{\epsilon}^*(M_{g^{}_{\scalebox{.7}{$\scriptscriptstyle E$}}})\; d\epsilon}$ as a function and write $I(\phi)(\grave{u},t)= \int_{-\infty}^0 Q(\grave{u},t,s) \phi(\grave{u},t+s) ds$. Therefore we have 
$$
\nabla_\alpha (I(\phi)) = \sum_{\beta + \theta = \alpha} \int_{-\infty}^0 \nabla_{\beta}(Q(\grave{u},t,s)) \nabla_{\theta}\phi(\grave{u},t+s) ds,
$$
and 
$$
\|\nabla_\alpha (I(\phi))\|_{L^{2^k}(K \cap L)} \leq C_{\alpha,k,K} \sum_{\theta \subset \alpha}|\int_{-\infty}^0 \nabla_{\theta}\phi(\grave{u},t+s) ds| \leq C_{\alpha,k,K} \lam^{\frac{\alpha_n+s}{2}-\half}.
$$
Combining the two cases will give $I(\phi) \in F^s$.
\end{proof}

\begin{remark}\label{filtrationremark}
Using the above lemma, we can show that the $\mu_l(\lam)$'s appearing in the iteration equation \eqref{wkbiteration} will satisfy $\mu_l(\lam) \in F^{-l-2}$. In particular, we can get an explicit estimate as
$$
\| \nabla^j \mu_l(\lam)\|_{L^2(K)} \leq C_{j,K} \lam^{\frac{j-l-2}{2}-\frac{1}{4}} ,
$$
for all $j$ and compact subset $K \subset W$.
\end{remark}

\subsection{A priori estimate}\label{L2approx}
We make use of the WKB iteration to construct the WKB expansion and prove that it does give a desired approximate as in theorem \ref{thm:homotopy_wkb} to the solution in section \ref{L2approx} and section \ref{sec:WKB_approximation}. This is a standard technique which is taken from \cite{HelSj1} (readers may also see \cite[Chapter 4]{helffer2006semi}), with slight modification in the current case. To begin with, we obtain an a priori estimate for the solution in this subsection.\\

We consider the equation
\begin{equation}\label{wkbequation}
\Delta_f \zeta_E = (I-P_f)d_f^*(\chi_S \zeta_S)
\end{equation}
 in $W$, where $\zeta_S \in \Omega^*(W_S)$ is the input form depending on $\lam$ and $\chi_S \in C^{\infty}_c(W_S)$ is some cutoff function to be chosen later. We assume $\zeta_S$ has a WKB approximation on $W_S$ of the form
\begin{equation}
\zeta_S \sim e^{-\lam \psi^{}_{\scalebox{.7}{$\scriptscriptstyle S$}}}(\omega_{S,0}+\omega_{S,1} \lam^{-1/2}+ \omega_{S,2} \lam^{-1} + \dots),
\end{equation}
where $\omega_{S,i} \in \Omega^*(W_S)$ and $\psi_S = f+g_S$. It is an approximation in the sense that
\begin{equation}
\|e^{\lam \psi^{}_{\scalebox{.7}{$\scriptscriptstyle S$}}} \zeta_S - (\sum_{i=0}^{N} \omega_{S,i} \lam^{-i/2}) \|^2_{L^\infty(W_S)} \leq C_N \lam^{-N-1}
\end{equation}
for $N$ large enough, where $C_N$ is a constant depending on $N$. We also require similar norm estimates for its derivatives
\begin{equation}
\|  e^{\lam\psi^{}_{\scalebox{.7}{$\scriptscriptstyle S$}}}\nabla^j ( \zeta_S - e^{-\lam \psi^{}_{\scalebox{.7}{$\scriptscriptstyle S$}}} (\sum_{i=0}^{N} \omega_{S,i} \lam^{-i/2})) \|^2_{L^\infty(W_S)} \leq C_{j,N} \lam^{-N-1+2j},
\end{equation}
with $C_{j,N}$ depending on $j, N$.\\

We want to get a similar expansion for $\zeta_E$, using the iteration defined in the section \ref{WKB_method}. We consider any small enough compact neighborhood $K\subset W$ of the flow line $\gamma$ with $\chi \equiv 1$ on $K$. $\chi_S$ is chosen so that $\supp(\chi_S) \subset K$. The following figure illustrates the situation.\\
\begin{figure}[h]
\centering
\includegraphics[scale=0.5]{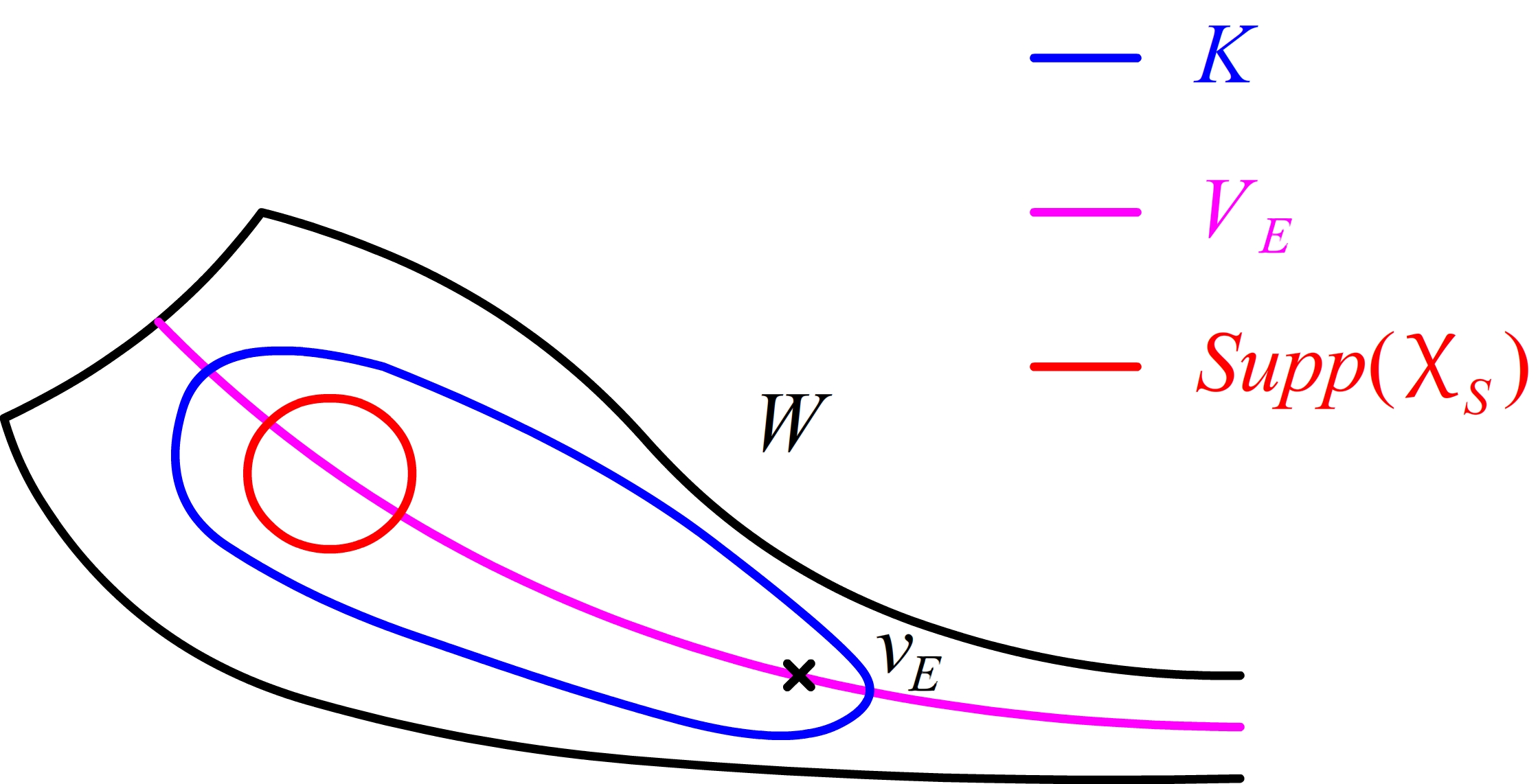}
\end{figure}

\noindent If $K$ is small enough, we have an a priori estimate of $\zeta_E$ in $K$ as lemma \ref{wkbapriori}, which is essentially the result of \cite[Proposition 5.5]{HelSj1} with modification to suit our current situation. 
\begin{lemma} \label{wkbapriori}
For small enough $\supp (\chi_S)$ and $K$, and any $j \in \inte_+$, there exists $\lam_{j,0}>0$ such that for any $\lam > \lam_{j,0}$, we have
\begin{equation}
\| e^{\lam \psi^{}_{\scalebox{.7}{$\scriptscriptstyle E$}}} \nabla^j \zeta_E\|^2_{L^\infty(K)} \leq C_{j} \lam^{N_j},
\end{equation}
where $N_j$ is an positive integer depending on $j$.
\end{lemma}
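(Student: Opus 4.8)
## Proof proposal for Lemma \ref{wkbapriori}

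The plan is to adapt the weighted-energy technique of \cite{HelSj1} used there for single eigenforms, applied now to the inhomogeneous equation \eqref{wkbequation}. The key object is the weight $\psi_E$, whose defining property (Lemma \ref{properties_of_psi}) is $|\nabla\psi_E|=|\nabla f|$ on all of $W$, so that in the energy identity \eqref{energyequality} the dangerous term $|df|^2-|d\psi_E|^2$ vanishes identically rather than merely being non-negative. First I would fix a compact neighborhood $K$ of the flow line $\gamma$ on which $\chi\equiv 1$, and take $\supp(\chi_S)$ small enough that Lemma \ref{homotopywkb} and all the results of Section \ref{WKB_method}--\ref{estimate} apply. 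Since $\zeta_E = H_f(\chi_S\zeta_S) = d_f^* G_f(\chi_S\zeta_S)$, the resolvent estimate (Lemma \ref{resolventlemma}, equivalently Proposition stated in Section \ref{resolvent}) already gives a crude global bound $\|e^{\psi_E/\hp}\zeta_E\|_{L^\infty(K)}\le C_\epsilon e^{\epsilon/\hp}\|u\|$ after absorbing the Agmon-distance decay into $\psi_E$ using $\psi_E\ge\tilde\psi_E$; the point of this lemma is to upgrade the $e^{\epsilon/\hp}$ loss to a fixed negative power $\hp^{-N_j}$.

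The main step is the $j=0$ case. I would plug $v = \chi\zeta_E$ (suitably cut off to vanish on $\partial K$, or work on a slightly larger domain with zero boundary values) and the weight $\psi = (1-\epsilon')\psi_E$ into \eqref{energyequality} applied to $\Delta_f$. Because $|\nabla\psi_E|=|\nabla f|$, the potential term becomes $(2-\epsilon')\epsilon'|df|^2 - |d\psi|^2\cdot(\text{small})$, which is bounded below by $c\,\epsilon'|df|^2$ away from the finitely many zeros of $df$; near those zeros one uses that $\chi_S\zeta_S$ is supported near $\gamma$ and that $\psi_E$ is comparable to the Agmon distance, exactly as in the proof of Lemma \ref{G_j_theta}. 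The $\hp M_f$ term is absorbed by taking $\hp$ small. The right-hand side $\Realpart\langle (I-P_f)d_f^*(\chi_S\zeta_S), e^{2\psi/\hp}v\rangle$ is controlled by integrating $d_f^*$ by parts onto $v$ and using the WKB hypothesis on $\zeta_S$, namely $e^{\psi_S/\hp}\zeta_S = O(1)$ in $L^\infty$, together with the elementary inequality $\psi_E(x)\le \psi_S(y)+\rho_f(y,x)$ for $y\in\supp(\chi_S)$ which controls the exponential weights; the commutator $[\Delta_f,\chi]$ contributes terms supported where $\chi$ is non-constant, i.e. near $\partial K$, where $\psi_E$ is strictly larger than on $\gamma$, so those terms carry extra exponential decay and are harmless. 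This yields $\|e^{\psi/\hp}\zeta_E\|^2_{L^2} + \hp^2\|d(e^{\psi/\hp}v)\|^2 \le C\hp^{-N_0}$ for some fixed $N_0$, and then letting $\epsilon'\to 0$ and using elliptic regularity (the operator $\hp^2\Delta$ has coefficients of size $\hp^2$, so each derivative costs a power of $\hp^{-1}$) plus Sobolev embedding promotes this to the $L^\infty(K')$ bound on a slightly smaller $K'$.

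For general $j$, I would bootstrap: apply $\nabla^j$ to \eqref{wkbequation}, commute it through $\Delta_f$ picking up lower-order terms with coefficients bounded by powers of $\hp^{-1}$ (from the $\hp^2\Delta$ scaling), and run the same weighted-energy argument on $\nabla^j\zeta_E$ with the inductive bounds on $\nabla^{j'}\zeta_E$, $j'<j$, feeding the inhomogeneous term. Each differentiation worsens the power of $\hp$ by a controlled amount, giving $N_j$ growing linearly in $j$. I expect the main obstacle to be the boundary/commutator bookkeeping: one must choose the cutoffs $\chi$, $\chi_S$ and the nested neighborhoods so that every time $[\Delta_f,\chi]$ or $[\nabla^j,\Delta_f]$ is applied, the resulting term is either supported where $\psi_E$ exceeds its value on $\gamma$ (hence exponentially negligible) or is of strictly lower order in $j$ and already estimated; keeping track of how small $\supp(\chi_S)$ must be relative to $K$ — which, as the remark after the induction in Section \ref{wkbinduction} notes, depends on the function $\psi_S$ itself — is the delicate part. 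A secondary technical point is justifying that $\psi_E\ge\tilde\psi_E$ can be arranged on $K$ by shrinking $W_0$, which is needed to line up the resolvent decay with the weight; this was already flagged in Section \ref{exp_decay} and I would simply invoke it.
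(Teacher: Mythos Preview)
Your overall strategy—a weighted energy estimate à la \cite{HelSj1} using the identity \eqref{energyequality}—is exactly the paper's approach, but your execution of the key step is incomplete in a way that matters.

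The central issue is your choice of weight $\psi=(1-\epsilon')\psi_E$ together with ``letting $\epsilon'\to 0$.'' With this weight the coercivity constant in \eqref{energyequality} is of order $\epsilon'$, so the estimate you obtain is
\[
\|e^{(1-\epsilon')\psi_E/\hp}\zeta_E\|^2 \le \frac{C}{\epsilon'}\,\hp^{-N_0}.
\]
To recover the full weight $e^{\psi_E/\hp}$ on $K$ you must multiply by $e^{\epsilon'\sup_K\psi_E/\hp}$, which is \emph{exponentially large} unless $\epsilon'$ is taken of order $\hp$. Sending $\epsilon'\to 0$ for fixed $\hp$ makes the constant blow up; taking $\epsilon'$ fixed and $\hp\to 0$ leaves an $e^{c/\hp}$ loss and only reproduces the resolvent bound you started from. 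What actually works is $\epsilon'\sim C'\hp$ with $C'$ large enough to absorb $\hp M_f$; then the weight discrepancy $e^{C'\psi_E}$ is a bounded constant on $K$ and the $1/\epsilon'$ becomes a single $\hp^{-1}$. The paper encodes precisely this balance via the modified weight
\[
\Phi=\psi_E - C\hp\log(\psi_E/\hp),
\]
for which $|df|^2-|d\Phi|^2\ge C\hp/C_0^2$ while $e^{\psi_E/\hp}=(\psi_E/\hp)^{C}e^{\Phi/\hp}$ contributes only a polynomial factor $\hp^{-C}$. This is the trick you are missing; without it your argument does not upgrade $e^{\epsilon/\hp}$ to $\hp^{-N}$.

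A second, more structural difference: rather than work with $\chi\zeta_E$ and wrestle with both the commutator $[\Delta_f,\chi]\zeta_E$ and the projection $P_f$ in the source term, the paper first replaces $\zeta_E$ by $\tilde\zeta_E=\tilde G(\chi_S\zeta_S)$, the solution of a \emph{Dirichlet} problem on a slightly larger domain $\tilde W$ containing no critical points. Lemma \ref{locality} (using the geometric inequality of Lemma \ref{convexlemma}) shows $\chi\zeta_E-\tilde\zeta_E=O(e^{-\delta/\hp})$ with the correct weight, so it suffices to estimate $\tilde\zeta_E$; and $\tilde\zeta_E$ satisfies $\Delta_f\tilde\zeta_E=d_f^*(\chi_S\zeta_S)$ with zero boundary data and \emph{no} projection term, which makes the energy argument clean. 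Your direct route can in principle be made to work, but the claim that the commutator ``carries extra exponential decay'' is exactly the content of Lemmas \ref{convexlemma}--\ref{locality} and needs a separate argument, not a parenthetical. Finally, the bootstrap in the paper applies $d_f$ and $d_f^*$ (which commute with $\Delta_f$) rather than raw $\nabla^j$; this avoids the commutator bookkeeping you anticipate.
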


In order to prove the above lemma, we need to know certain properties of $\chi$ and the chosen compact set $K$. Let $\tilde{\psi}:= \inf_{y \in \supp(\chi_S)} \{ \psi_S + \dist_f(y,x)\}$, we have the following lemma playing the role of \cite[Lemma 5.7]{HelSj1}.

\begin{lemma}\label{convexlemma}
There exists $\epsilon>0$ such that for all sufficiently small $K$, we have 
\begin{equation}
\tilde{\psi}(x) + \dist(y,x) \geq \psi_E(y) +\epsilon,
\end{equation}
for all $y\in K$ and $x \in \supp(\nabla \chi)$.
\end{lemma}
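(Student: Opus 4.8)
The plan is to exploit the two facts that are already available about the geometry of $\psi_E$: first, that $\psi_E \geq \tilde\psi_E$ on $W_0$ after shrinking (noted just before \S\ref{properties_of_psi}), and second, that $\tilde\psi_E$, being an infimal convolution of $\psi_S$ with the Agmon distance $\dist=\dist_f$, is $1$-Lipschitz in the Agmon metric. Concretely, the key inequality is that $\tilde\psi_E(y)+\dist(x,y)\geq\tilde\psi_E(x)$ for all $x,y$, which is immediate from the definition of $\tilde\psi_E$ as an infimum and the triangle inequality for $\dist$. Combined with $\psi_E\leq$ no—here I need it the other way—so I instead want to compare $\psi_E(x)$ to $\tilde\psi_E(x)$ and then to $\tilde\psi_E(y)+\dist(x,y)$. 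Since $\psi_E\geq\tilde\psi_E$ would point the wrong way, the right statement to use is the \emph{reverse}: on the region of interest $\psi_E=\tilde\psi_E$ would give equality, so I will argue that the strict gain $\epsilon$ comes entirely from the fact that $y$ is bounded away from the locus where the infimum defining $\tilde\psi_E(x)$ is attained along the flow.

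First I would make precise that $\tilde\psi_E(x)=\psi_S(y_x)+\dist(y_x,x)$ with the minimizing $y_x$ lying in $V_S$ (or at least in $\supp\chi_S$), and that the minimizing geodesic realizing $\dist(y_x,x)$ is, by the variational analysis in \S\ref{exp_decay}--\S\ref{psi_f_g}, a reparametrized gradient flow line of $\nabla f$ starting on $U_S$; this identifies $\tilde\psi_E$ with $\psi_E$ on $W_0$ and shows the infimum for points of $K$ is attained at points $y_x$ in a fixed compact neighborhood $N_0$ of $V_S$ inside $W_S$. Next, since $\supp(\nabla\chi)$ is by construction disjoint from the flow line $\gamma$ and $\chi\equiv 1$ on a neighborhood $K$ of $\gamma$, the set $\supp(\nabla\chi)$ is a fixed compact set that does not meet $N_0$ nor any of the minimizing geodesic segments from $N_0$ to points of $K$; hence there is a uniform $\epsilon>0$ with
\[
\psi_S(y)+\dist(x,y)\;\geq\;\tilde\psi_E(x)+\epsilon
\]
for all $x\in K$, $y\in\supp(\nabla\chi)$, simply because replacing the true minimizer $y_x$ by any $y$ outside a neighborhood of the minimizing locus strictly increases the value of $\psi_S(\cdot)+\dist(\cdot,x)$, and by compactness of both $K$ and $\supp(\nabla\chi)$ the increase is bounded below uniformly. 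Finally, $\psi_S(y)\geq\tilde\psi_E(y)$ is trivial (taking $y$ itself as competitor in the infimum), so $\tilde\psi_E(y)+\dist(x,y)\geq\psi_S(y)+\dist(x,y)-\!$—no: $\tilde\psi_E(y)\leq\psi_S(y)$, so this would weaken the bound. Instead I replace $\psi_S(y)$ in the displayed inequality: since $y\in\supp(\nabla\chi)\subset W_S$ we have $\tilde\psi_E(y)\le\psi_S(y)$, hence the displayed inequality is \emph{weaker} than what is claimed; so the honest route is to prove the claim with $\tilde\psi_E(y)$ directly, using $\tilde\psi_E(y)+\dist(x,y)\ge\tilde\psi_E(x)=\psi_E(x)$ and then arguing the gain $\epsilon$ comes from $\supp(\nabla\chi)$ avoiding the minimizing geodesics as above. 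Shrinking $K$ toward $\gamma$ only helps, since it shrinks the minimizing locus toward $\gamma$ as well.

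The main obstacle I anticipate is the uniformity of $\epsilon$: one must check that the minimizing geodesics (reparametrized $\nabla f$-flow lines) from points of $W_S$ near $V_S$ to points of $K$ stay, after shrinking $K$ around $\gamma$, inside a fixed compact tube around $\gamma$ disjoint from $\supp(\nabla\chi)$ — this uses continuity of solutions of the gradient ODE in initial conditions together with the fact (\S\ref{exp_decay}) that $W_0$ avoids the critical points of $f$, so the flow has no singularities along $\gamma$. Once that tube is fixed, $\epsilon$ is obtained as the positive minimum over the compact set $K\times\supp(\nabla\chi)$ of the continuous function $\tilde\psi_E(y)+\dist(x,y)-\psi_E(x)$, which is strictly positive pointwise because equality in $\tilde\psi_E(y)+\dist(x,y)=\tilde\psi_E(x)$ forces $y$ onto a minimizing geodesic to $x$, contradicting $y\in\supp(\nabla\chi)$. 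This $\epsilon$ then feeds directly into the a priori estimate Lemma \ref{wkbapriori}, where the factor $e^{\epsilon/\hp}$ absorbs the polynomial losses coming from commutators with $\chi$.
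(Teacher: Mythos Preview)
Your strategy --- the $1$-Lipschitz inequality $\tilde\psi_E(y)+\dist(x,y)\ge\tilde\psi_E(x)$, the characterization of equality as forcing $y$ onto a minimizing Agmon geodesic (hence a reparametrized $\nabla f$-flow line), and the compactness argument on $K\times\supp(\nabla\chi)$ --- is the same as the paper's. The difference is in how you pass from $\tilde\psi_E(x)$ to $\psi_E(x)$.

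You assert that the variational analysis of \S\ref{exp_decay}--\S\ref{psi_f_g} ``identifies $\tilde\psi_E$ with $\psi_E$ on $W_0$''. That analysis only locates the \emph{stationary} points of $y\mapsto\psi_S(y)+\dist(y,x)$ on the hypersurface $U_S$; it does not show the infimum over $W_S$ is attained there, and the paper itself only claims $\psi_E\ge\tilde\psi_E$ after shrinking $W_0$. So as written this step is a gap.

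The paper sidesteps the issue by comparing with $f$ instead of with $\tilde\psi_E$: since $\psi_E=f$ on $V_E\supset\gamma$, one has $|\psi_E-f|\le\epsilon'$ on $K$ once $K$ is small, so it suffices to prove $\tilde\psi_E(y)+\dist(x,y)\ge f(x)+\epsilon$. This follows from the two elementary Agmon inequalities $\tilde\psi_E(y)\ge f(y)$ (because $\psi_S\ge f$ and $\dist(z,y)\ge f(y)-f(z)$) and $\dist(x,y)\ge f(x)-f(y)$, with equality in both forcing a generalized gradient line from $V_S$ through $y$ to $x$, which cannot meet $\supp(\nabla\chi)$ when $K$ hugs $\gamma$. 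Your argument can be repaired in exactly the same way: you only need $\tilde\psi_E=\psi_E$ on $\gamma$ (which is clear, both equal $f$ there), and then absorb the difference $|\psi_E-\tilde\psi_E|$ on $K$ into the strict gap produced by compactness.
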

\begin{proof}
Using the fact that $\psi_E = f$ on $V_E$ and choosing $K$ small enough such that $| \psi_E - f | \leq \epsilon$ on $K$, we can simply prove 
$$
\tilde{\psi}(x) +\dist(y,x) \geq f(y) + \epsilon ,
$$
by choosing small enough $K$ and $\epsilon$. From the properties of Agmon distance $\dist$, we have 
\begin{equation*}
\tilde{\psi}(x) \geq \min_{z \in \supp(\chi_S)}( f(z)  + f(x)- f(z)) = f(x),
\end{equation*}
with equality holds only if $z \in V_S$ and there is a generalized gradient line joining $z$ to $x$. Therefore, we have 
\begin{equation*}
\tilde{\psi}(x) +\dist(y,x) \geq f(x) + f(y) -f(x) = f(y),
\end{equation*}
with equality holds only if there is a generalized gradient line joining a point $z \in V_S$ to $x \in \supp(\chi)$ and then to $y \in K$. This is impossible by for our choices of $\chi$ and $K$. Hence we always have strict inequality and therefore we can find small $\epsilon$ by compactness argument. 
\end{proof}

We consider a closed neighborhood $\tilde{W}$ of $\supp(\chi)$ in $W$ with smooth boundary. We let $\tilde{G}$ to be the twisted Green's operator on $\tilde{W}$ using Dirichlet boundary condition. We first argue that $\zeta_E$ can be replaced by $\tilde{\zeta}_E = d^*_f\tilde{G} \chi_S \zeta_S$.
\begin{lemma}\label{locality}
There exists $\delta>0$ such that
\begin{equation*}
\| e^{\lam \psi^{}_{\scalebox{.7}{$\scriptscriptstyle E$}}}\nabla^j (\chi \zeta_E - \tilde{\zeta}_E )\|_{L^{\infty}(K)} \leq C_j e^{-\lam \delta},
\end{equation*}
for all $j \in \inte_+$ whenever $\supp(\chi_S)$ and $K$ are chosen to be small enough. 
\end{lemma}
\begin{proof}
We let $r_\lam = \chi \zeta_E - \tilde{\zeta}_E $. First, $r_\lam$ satisfies the equation
\begin{equation}
\tilde{\Delta}_f r_\lam =  [\Delta,\chi] \zeta_E  - \chi P_f d_f^*(\chi_S \zeta_S) .
\end{equation}
Therefore we have $r_\lam = (\tilde{G} [\Delta,\chi] G  -  \tilde{G}\chi P_f )d_f^*(\chi_S \zeta_S)$. We consider it term by term to get estimate of $r_\lam$. Making use of lemma \ref{resolventlemma} and a similar statement for $\tilde{G}$, we have for any $\epsilon >0$,
$$
\tilde{G} [\Delta,\chi] G \sim \mathcal{O}_\epsilon\Big(\exp(-\lam(\min_{z\in \supp(\nabla \chi)}(\dist(x,z)+\dist(z,y)-\epsilon)))\Big).
$$
Using lemma \ref{convexlemma}, we can show there exists $\delta_0>0$ such that
$$
\tilde{G} [\Delta,\chi] G  d_f^*(\chi_S \zeta_S) \sim \mathcal{O}(e^{-\lam(\psi^{}_{\scalebox{.7}{$\scriptscriptstyle E$}}+\delta_0)})
$$
in $K$ when $\lam$ is small enough.

For the term $\tilde{G}\chi P_f $, we have 
$$
\tilde{G}\chi P_f  \sim \mathcal{O}_\epsilon\Big(\sum_{q\in C_f^{l}}\exp(-\lam(\dist(x,q)+\dist(q,y)-\epsilon))\Big)
$$
follows from lemma \ref{eigenestimate2} and modified version of lemma \ref{resolventlemma} for $\tilde{G}$, where $l=deg(\zeta_S)$. Again, we can find a constant $\delta_1>0$ such that 
$$
\min_{x \in \supp(\chi_S)}(\psi_S(x)+\dist(x,q)+\dist(q,y)) \geq \psi_E(y)+2\delta_1,
$$
for $y \in K$.
Similarly we have 
$$
\tilde{G}\chi P_f d_f^*(\chi_S \zeta_S) \sim \mathcal{O}(e^{-\lam(\psi^{}_{\scalebox{.7}{$\scriptscriptstyle E$}}+\delta_1)})
$$
in $K$ when $\lam$ is large enough. Notice that the constant $\delta = \min\{\delta_0,\delta_1\}$ can chosen to be the same if we shrink $\supp(\chi_S)$ and $K$ and keep $\tilde{W}$ and $\chi$ fixed.
\end{proof}
Next, we obtain estimates for $\tilde{\zeta}_E$ similar to those in lemma \ref{wkbapriori} for $\zeta_E$ using the argument as in \cite[Proposition 5.5]{HelSj1}. 
\begin{lemma}\label{tildewkbapriori}
For any $j \in \inte_+$, there exists $\lam_{j,0}>0$ such that if $\lam > \lam_{j,0}$, we have
\begin{equation}
\| e^{\lam\psi^{}_{\scalebox{.7}{$\scriptscriptstyle E$}}} \nabla^j \tilde{\zeta}_E\|^2_{L^\infty(\tilde{W})} \leq C_{j} \lam^{N_j},
\end{equation}
where $N_j$ is an positive integer depending on $j$.
\end{lemma}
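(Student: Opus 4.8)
The plan is to establish this by the same weighted energy (Agmon-type) estimate that underlies Lemma \ref{wkbapriori}. The point is that $\tilde{W}$ is compact and, by construction, contains no critical point of $f$, so $|df|^2 \geq c_0$ on $\tilde{W}$ for some $c_0>0$, while $\psi_E$ satisfies the eikonal identity $|d\psi_E|=|df|$ (Lemma \ref{properties_of_psi}). Recall $\tilde{\zeta}_E = \tilde{G}\, d_f^*(\chi_S\zeta_S)$ is the unique solution of $\tilde{\Delta}_f \tilde{\zeta}_E = d_f^*(\chi_S\zeta_S)$ on $\tilde W$ with Dirichlet boundary values (existence being clear since $\tilde{\Delta}_f = \hp^2\Delta + \hp M_f + |df|^2$ is positive on the critical-point-free region $\tilde W$ for $\hp$ small), and therefore $e^{\varphi/\hp}\tilde{\zeta}_E$ vanishes on $\partial\tilde W$ for every smooth weight $\varphi$, so the energy identity \eqref{energyequality} applies on $\Omega=\tilde{W}$.

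First I would run \eqref{energyequality} with the weight $\varphi = \lambda\psi_E$, $\lambda = 1-K\hp$, where $K = 1 + \|M_f\|_{L^\infty}/c_0$ is fixed. Since $|d\varphi|^2 = \lambda^2|df|^2$, the zeroth-order term becomes $\bigl((1-\lambda^2)|df|^2 + \hp M_f\bigr)$, which for $\hp$ small is $\geq c_0\hp$ as a quadratic form; absorbing the cross term by Young's inequality gives
\[
\|e^{\lambda\psi_E/\hp}\tilde{\zeta}_E\|_{L^2}^2 \;\leq\; C\hp^{-2}\,\|e^{\lambda\psi_E/\hp}\, d_f^*(\chi_S\zeta_S)\|_{L^2}^2 .
\]
For the right-hand side, the assumed WKB expansion of $\zeta_S$ together with $d_f^* = \hp d^* + \iota_{\nabla f}$ gives $\|e^{\psi_S/\hp} d_f^*(\chi_S\zeta_S)\|_{L^2} \leq C$, and since $\psi_E - \psi_S = \Psi \leq 0$ on $W_S \supseteq \supp(\chi_S)$ (Lemma \ref{psi_f_g}) with $\psi_S$ bounded there, $e^{(\lambda\psi_E-\psi_S)/\hp} = e^{\lambda\Psi/\hp}e^{(\lambda-1)\psi_S/\hp} \leq e^{K\|\psi_S\|_{L^\infty(\supp\chi_S)}}$. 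Hence $\|e^{\lambda\psi_E/\hp}\tilde{\zeta}_E\|_{L^2}^2 \leq C\hp^{-2}$, and since $(1-\lambda)\psi_E/\hp = K\psi_E$ is bounded on the compact set $\tilde W$ (note $\psi_E\geq f$ is bounded below), we conclude $\|e^{\psi_E/\hp}\tilde{\zeta}_E\|_{L^2}^2 \leq C\hp^{-2}$.

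Next I would bootstrap to derivatives and to $L^\infty$: commuting $\nabla$ through $\tilde{\Delta}_f\tilde{\zeta}_E = d_f^*(\chi_S\zeta_S)$ and using interior elliptic regularity for $\hp^2\Delta$ (and the corresponding estimate near $\partial\tilde W$ with the Dirichlet condition), each derivative costs a fixed negative power of $\hp$ — one from the $\hp^2$ in front of the Laplacian and one from differentiating the weight $e^{\psi_E/\hp}$ — so that $\|e^{\psi_E/\hp}\nabla^j\tilde{\zeta}_E\|_{L^2}^2 \leq C_j\hp^{-M_j}$ with $M_j$ growing linearly in $j$, the relevant $C^{k}$-norms of $\zeta_S$ entering the constants being finite by hypothesis. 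Sobolev embedding then upgrades this to the asserted $L^\infty$ bound with, say, $N_j = M_{j+\lceil n/2\rceil+1}$.

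The main obstacle is the first step: one cannot take the weight to be $\psi_E$ itself, because with $|d\psi_E|^2 = |df|^2$ the zeroth-order term of \eqref{energyequality} degenerates to the non-coercive $\hp M_f$ and the estimate collapses. Scaling the weight down by $\lambda = 1-K\hp$ restores a genuine coercivity, but only of size $O(\hp)$, and this is exactly what produces the polynomial loss $\hp^{-N_j}$ in the statement, in contrast to the $\epsilon$-loss appearing in Lemma \ref{eigenestimate1}. Alternatively one could first read off the bound on the compact set $K$ from Lemmas \ref{wkbapriori} and \ref{locality}, using $\chi\equiv1$ on $K$ and $\supp(\chi_S)\subset K$, and then propagate it to $\tilde W\setminus K$ — where $\tilde{\zeta}_E$ is $\tilde{\Delta}_f$-harmonic — by the same weighted energy estimate; but the direct argument above is more transparent and self-contained.
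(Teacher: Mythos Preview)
Your argument is correct and follows the same three-step template as the paper: Agmon-weighted energy identity to get a weighted $L^2$ bound, bootstrap to derivatives, then Sobolev embedding. Two minor differences are worth noting. First, the paper uses the Helffer--Sj\"ostrand weight $\Phi=\psi_E-C\hp\log(\psi_E/\hp)$ rather than your multiplicative scaling $\lambda\psi_E$; both produce an $O(\hp)$ coercivity gap $|df|^2-|d\Phi|^2\gtrsim\hp$, and your choice is arguably simpler. Second, for the bootstrap the paper applies $d_f$ and $d_f^*$ to the equation (these commute with $\Delta_f$, so $d_f\tilde\zeta_E$ satisfies $\Delta_f(d_f\tilde\zeta_E)=d_fd_f^*(\chi_S\zeta_S)$ and one re-runs Step~1), then uses $\hp d=d_f-df\wedge$ to extract $\nabla$; this avoids having to invoke boundary elliptic regularity directly. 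Your commutator/elliptic-regularity sketch is fine in principle but the paper's route is a bit cleaner since the Dirichlet condition for $d_f\tilde\zeta_E$ is not automatic.

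One caution: your proposed ``alternative'' --- deducing the bound on $K$ from Lemmas~\ref{wkbapriori} and~\ref{locality} and then propagating --- would be circular here, since in the paper Lemma~\ref{wkbapriori} is \emph{derived} from Lemmas~\ref{locality} and~\ref{tildewkbapriori}, not the other way around. You were right to prefer the direct argument.
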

\begin{proof}
We consider the equation
\begin{equation}\label{tildewkbequation}
\Delta_f \tilde{\zeta}_E  =d_f^*( \chi_S \zeta_S)
\end{equation}
with Dirichlet boundary condition in $\tilde{W}$, and divide the proof into three steps:

\textbf{Step 1:}
Without loss of generality, we assume there is a constant $C_0>0$ such that $C_0^{-1} \leq \psi_E \leq C_0$ and $C_0^{-1} \leq | df|^2=|d\psi_E|^2 \leq C_0$ on $\tilde{W}$. We define the function 

\begin{equation}\label{bigphi_definition}
\Phi = 
\psi_E -\frac{C}{\lam} \log(\lam\psi_E),
\end{equation}
with $C>0$ to be chosen. Therefore we have 
$$
|df|^2- |d \Phi|^2\geq \frac{C |df|^2}{\lam \psi_E}\geq \frac{C}{C_0^2 \lam}.
$$
Using the equation \eqref{tildewkbequation} we get
\begin{eqnarray*}
Re(\langle e^{2\lam\Phi} d_f^*(\chi_S \zeta_S),\tilde{\zeta}_E   \rangle)& =&  ( \| d(e^{\lam\Phi} \tilde{\zeta}_E )\|^2+\| d^*(e^{\lam\Phi}\tilde{\zeta}_E  )\|^2)\\
 &&+ \langle (\lam^2(|df|^2 - |d \Phi|^2) + \lam M_f) e^{\lam \Phi} \tilde{\zeta}_E ,e^{\lam\Phi} \tilde{\zeta}_E  \rangle
\end{eqnarray*}
and if we choose a large $C>0$ to absorb the term $\langle \lam M_f e^{\lam\Phi}\tilde{\zeta}_E, e^{\lam\Phi }\tilde{\zeta}_E \rangle $, we have
\begin{eqnarray*}
&& ( \| d(e^{\lam\Phi} \tilde{\zeta}_E)\|^2+\| d^*(e^{\lam\Phi} \tilde{\zeta}_E)\|^2)+ \frac{C\lam}{2C_0^2} \| e^{\lam\Phi} \tilde{\zeta}_E \|^2\\
&\leq&  C_1 \| e^{\lam\Phi} d_f^*(\chi_S \zeta_S)\|^2 \leq C_1 (\frac{C_0}{\lam})^{2C} \| e^{\lam\psi^{}_{\scalebox{.7}{$\scriptscriptstyle E$}}}d_f^*( \chi_S \zeta_S )\|^2 \\
&\leq &C_2 (\frac{C_0}{\lam})^{2C}\| e^{\lam\psi^{}_{\scalebox{.7}{$\scriptscriptstyle S$}}}d_f^*( \chi_S \zeta_S )\|^2 \leq C_3 \lam^{2-2C} .
\end{eqnarray*}
Therefore we get 
$$
 ( \| d (e^{\lam\psi^{}_{\scalebox{.7}{$\scriptscriptstyle E$}}} \tilde{\zeta}_E)\|^2+\| d^*(e^{\lam\psi^{}_{\scalebox{.7}{$\scriptscriptstyle E$}}} \tilde{\zeta}_E)\|^2) + \lam \| e^{\lam\psi^{}_{\scalebox{.7}{$\scriptscriptstyle E$}}} \tilde{\zeta}_E\|^2 \leq C_3,
$$
and so $\|e^{\lam\psi^{}_{\scalebox{.7}{$\scriptscriptstyle E$}}} \tilde{\zeta}_E\|^2_{L^2(K)} \leq C_4 \lam^{-1}$, for $\lam < \lam_0$.

\textbf{Step 2:} We prove the $L^2$ estimate for derivatives of $\tilde{\zeta}_E$.
We apply $d_f$ and $d^*_f$ to both sides of equation \eqref{tildewkbequation}. We obtain
\begin{equation}
\Delta_f (d_f \tilde{\zeta}_E) = d_fd_f^* (\chi_S \zeta_S).
\end{equation}
Applying the result in step $1$ to $d_f \tilde{\zeta}_E$, we have 
$$
\| e^{\lam\psi^{}_{\scalebox{.7}{$\scriptscriptstyle E$}}} d_f \tilde{\zeta}_E \|^2_{L^2(K)} \leq C_4 \lam^{-1}. 
$$
Since $d_f =  d +  \lam df \wedge$, we have 
$$
\| e^{\lam \psi^{}_{\scalebox{.7}{$\scriptscriptstyle E$}}} d \tilde{\zeta}_E \| ^2_{L^2(K)}\leq C_5 \lam^{1}.
$$

Corresponding result for $d^* \tilde{\zeta}_E$ can be obtained by a similar argument. These combine together to obtain an estimate for $\nabla \tilde{\zeta}_E$. By applying $\nabla$ successively, we obtain all higher derivatives' estimates in a similar fashion. 

\textbf{Step 3:} Finally, we improve the estimate to $L^{\infty}$ norm. Since we have $L^2$ norm estimate for all the derivatives of $\tilde{\zeta}_E$. We use the Sobolev embedding on $\tilde{W}$ to obtain the $L^{\infty}$ norm estimate. Details are left to readers.
\end{proof}
Lemma \ref{wkbapriori} follows from lemma \ref{locality} and lemma \ref{tildewkbapriori} directly.

\subsection{WKB approximation}\label{sec:WKB_approximation}
Next, we consider the WKB approximation of $\zeta_E$. From the WKB approximation \eqref{awkbexpansion} of $\zeta_S$, we can take $d_f^*$ on both side and obtain a WKB approximation of $d_f^* (\chi_S\zeta_S)$
\begin{equation}\label{dstarawkbexpansion}
d_f^* (\chi_S\zeta_S) \sim e^{-\lam \psi^{}_{\scalebox{.7}{$\scriptscriptstyle S$}}} ( d^* + \lam (\iota_{\nabla f}+\iota_{\nabla \psi_S}))(\chi_S\omega_{S,0} + \chi_S\omega_{S,1} \lam^{-1/2} + \dots),
\end{equation}
after grouping terms according to their orders of $\lam$. We apply the iteration in the previous subsection \ref{WKB_method} terms by terms to the above series and then group the terms according to orders of $\lam$ of their $L^2$ norms. As a result, we obtain a WKB expansion
\begin{equation}\label{bwkbexpansion2}
\zeta_E \sim e^{-\lam \psi^{}_{\scalebox{.7}{$\scriptscriptstyle E$}}}(\omega_{E,0}(\lam) + \omega_{E,1}(\lam) + \dots)
\end{equation}
in $W$, where $\omega_{E,i}(\lam)$'s are functions also depending on $\lam$. Using lemma \ref{filtrationlemma} and remark \ref{filtrationremark}, we know that for every $l$ and any compact subset $\tilde{K} \subset W$,
$$
\| \omega_{E,l}(\lam) \|^2_{L^2(\tilde{K})}\leq C_{l,\tilde{K}} \lam^{ -l-1/2}
$$
for those $\lam < \lam_{l,0}$, and also 
$$
\|e^{\lam\psi^{}_{\scalebox{.7}{$\scriptscriptstyle E$}}}( \Delta_f (e^{-\lam\psi^{}_{\scalebox{.7}{$\scriptscriptstyle E$}}} \sum_{i=0}^N \omega_{E,i}(\lam)) - d_f^*( e^{-\lam\psi^{}_{\scalebox{.7}{$\scriptscriptstyle S$}}} \sum_{i=0}^N \omega_{S,i}\lam^{-i/2}))\|^2_{L^2(\tilde{K})} \leq C_{N,\tilde{K}} \lam^{-N-1/2},
$$
for $\lam > \lam_{N,0}$. After establishing the estimate of WKB iteration in section \ref{estimate}, we need to show that it is a good approximation as stated in theorem \ref{thm:homotopy_wkb}. The proof is actually a slight modification of \cite[Theorem 5.8]{HelSj1}. 
\begin{theorem}\label{thm:homotopy_wkb}
For any $\supp(\chi_S)$ and $K$ small enough, and $N$ large enough, there exists $\lam_{j,N,0}>0$ such that for $\lam > \lam_{j,N,0}$ we have
\begin{equation}
\|e^{\lam \psi^{}_{\scalebox{.7}{$\scriptscriptstyle E$}}}\nabla^j\lbrace \zeta_E - e^{-\lam\psi^{}_{\scalebox{.7}{$\scriptscriptstyle E$}}} (\sum_{i=0}^{N} \omega_{E,i}(\lam) )\rbrace \|^2_{L^2(K)} \leq C_{j,N} \lam^{-N+2j}.
\end{equation} 
\end{theorem} 
\begin{proof}
Making use of lemma \ref{locality}, we again consider the equation \ref{tildewkbequation}. It suffices to show that the approximation works for $\tilde{\zeta}_E$ on some small enough pre-compact neighborhood $K$ of the flow line $\gamma$. We divide the proof into several steps. 

\textbf{Step 1:} As $\omega_{E,i}(\lam)$'s do not vanish on boundary of $\tilde{W}$, we first need to cut them off suitably for applying integration by part. $\omega_{E,i}(\lam)$'s, being defined by integrating along flow of $\tau$, have support as shown in the following figure \ref{fig:supportomega}. 

\begin{figure}[h]
\centering
\includegraphics[scale=0.2]{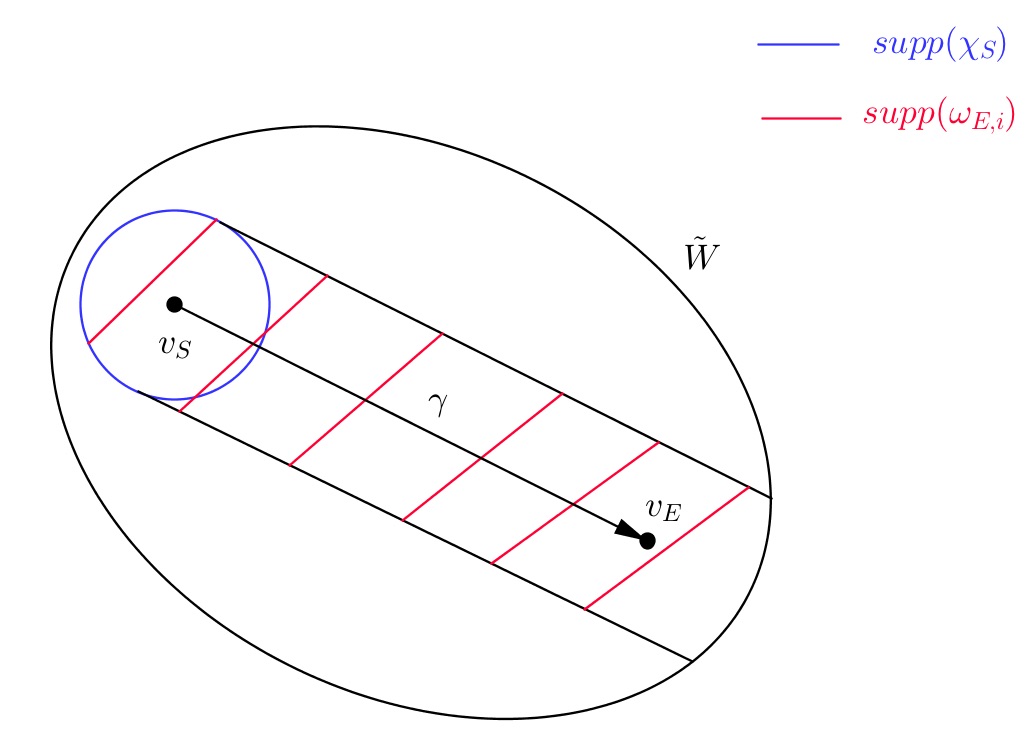}
\caption{Support of $\omega_{E,i}$'s}
\label{fig:supportomega}
\end{figure}

Suppose we have $\tau_{\tilde{T}}(v_S) = v_E$, then we can choose $\tilde{\chi}$ which only depends on variable $t$ (using coordinate defined by $\tau$) such that $\tilde{\chi} \equiv 1 $ for $t \leq \tilde{T}$. The support of $\nabla \tilde{\chi}$ is shown in the following figure \ref{fig:supportchi}. \\

\begin{figure}[h]
\centering
\includegraphics[scale=0.2]{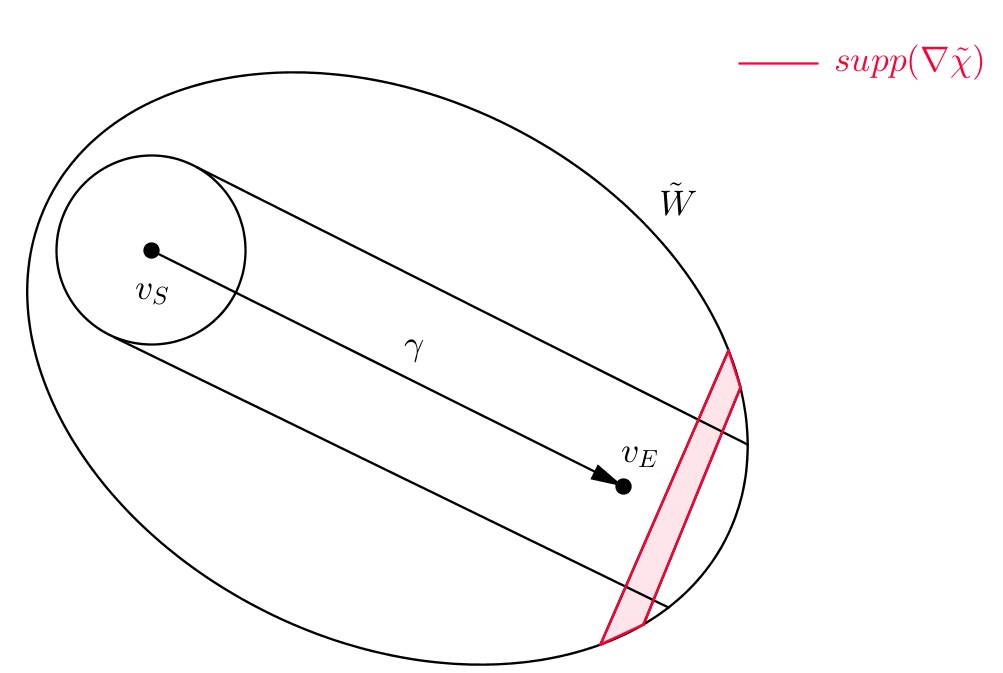}
\caption{Support of $\nabla \tilde{\chi}$}
\label{fig:supportchi}
\end{figure}

By shrinking $K$ and $\supp(\chi_S)$ if necessary, we obtain some $\epsilon>0$ such that 
\begin{equation}
\psi_E(y) + \dist(y,x) \geq \psi_E(x) + \epsilon
\end{equation}
for $x \in K$ and $y \in \supp(\nabla\tilde{\chi})$. We define the function 
\begin{equation}
\Phi_N = \min \{ \Phi + N\lam^{-1} \log(\lam), \min_{y \in \supp(\nabla \tilde{\chi})}(\Phi(y) + (1-\epsilon) \dist(x,y)) \},
\end{equation}
where $\Phi:=\psi_E -\frac{C}{\lam} \log(\lam\psi_E)$ is defined in \eqref{bigphi_definition}, and $\epsilon$ is chosen as in lemma \ref{convexlemma}. We have 
$$
|df|^2- |d \Phi_N|^2\geq \frac{C |df|^2}{\lam\psi_E}\geq \frac{C}{C_0^2\lam},
$$
for $\lam$ large enough. Notice that we have $\Phi_N = \Phi+ N\lam^{-1} \log(\lam)$ in $K$ for $\lam$ large enough, and $\Phi_N = \Phi$ in $\supp(\nabla \tilde{\chi})$. 

\textbf{Step 2:} Writing the reminder term as $ r_k = \tilde{\chi} (\tilde{\zeta}_E- e^{-\lam\psi^{}_{\scalebox{.7}{$\scriptscriptstyle E$}}} (\sum_{i=0}^{k-1} \omega_{E,i}(\lam) ))$, we get 
\begin{eqnarray*}
&& ( \| d(e^{\lam \Phi_N}r_k)\|^2_{L^2(K)}+\| d^*(e^{\lam\Phi_N}r_k)\|^2_{L^2(K)})+ \frac{C\lam^{1}}{2C_0^2} \| e^{\lam \Phi_N}  r_k \|^2_{L^2(K)}\\
&\leq & D \| e^{\lam\Phi_N} d_f^*(\chi_S \zeta_S- e^{-\lam\psi^{}_{\scalebox{.7}{$\scriptscriptstyle S$}}} \sum_{i=0}^{k-1}\chi_S\omega_{S,i}\lam^{-i/2}) \|^2_{L^2(\tilde{W})}\\
&+ &D \| e^{\lam \Phi_N}(d_f^*(e^{-\lam \psi^{}_{\scalebox{.7}{$\scriptscriptstyle S$}}} \sum_{i=0}^{k-1}\chi_S\omega_{S,i}\lam^{-i/2}) -\Delta_f( e^{-\lam\psi^{}_{\scalebox{.7}{$\scriptscriptstyle E$}}} \sum_{i=0}^{k-1}\omega_{E,i}(\lam)) )\|^2_{L^2(\tilde{W})}\\
&+ &D ( \| e^{\lam\Phi} [\Delta,\tilde{\chi}] \tilde{\zeta}_E\|^2_{L^2(\tilde{W})}+\| e^{\lam\Phi} [\Delta,\tilde{\chi}] (e^{-\lam\psi^{}_{\scalebox{.7}{$\scriptscriptstyle E$}}} \sum_{i=0}^{k-1}\omega_{E,i}(\lam)) \|^2_{L^2(\tilde{W})}).
\end{eqnarray*}
We handle the right hand side term by term. First, we have
$$
 \| e^{\lam\Phi_N} d_f^*(\chi_S \zeta_S - e^{-\lam\psi^{}_{\scalebox{.7}{$\scriptscriptstyle S$}}} \sum_{i=0}^{k-1}\chi_S\omega_{S,i} \lam^{-i/2})\|^2 \leq C_{k}\lam^{-2C+2N-k+2}.  
$$
Second, we have 
$$
\| e^{\lam \Phi_N}(d_f^*(e^{-\lam \psi^{}_{\scalebox{.7}{$\scriptscriptstyle S$}}} \sum_{i=0}^{k-1}\chi_S\omega_{S,i} \lam^{-i/2})-\Delta_f( e^{-\lam\psi^{}_{\scalebox{.7}{$\scriptscriptstyle E$}}} \sum_{i=0}^{k-1}\omega_{E,i}(\lam)) )\|^2 \leq C_k \lam^{-2C+2N-k+1}.
$$
Third, we have
$$
\| e^{\lam \Phi} [\Delta,\tilde{\chi}] \tilde{\zeta}_E \|^2 \leq D_1 \lam^{-2C+N_0},
$$
where $N_0$ is the integer in lemma \ref{wkbapriori}. Finally, we have
$$
\| e^{\lam \Phi} [\Delta,\tilde{\chi}] (e^{-\lam \psi^{}_{\scalebox{.7}{$\scriptscriptstyle E$}}} \sum_{i=0}^{k-1}\omega_{E,i}(\lam)) \|^2 \leq C_k \lam^{-2C+N_0},
$$
by choosing a larger $N_0$ independent of $k$, if necessary. Combining the above, by choosing $N = N_0 + k$, we have
$$
( \| d(e^{\lam \psi^{}_{\scalebox{.7}{$\scriptscriptstyle E$}}}r_k)\|^2_{L^2(K)}+\| d^*(e^{\lam \psi^{}_{\scalebox{.7}{$\scriptscriptstyle E$}}}r_k)\|_{L^2(K)}^2) + \lam \| e^{\lam \psi^{}_{\scalebox{.7}{$\scriptscriptstyle E$}}} r_k \|^2_{L^2(K)} \leq C_k \lam^{-k+2},
$$
which gives $ \| e^{\lam \psi^{}_{\scalebox{.7}{$\scriptscriptstyle E$}}} r_k \|^2_{L^2(K)} \leq C_k \lam^{-k+1}$, for those $\lam < \lam_{k,0}$. 

\textbf{Step 3:} We obtain $L^2$ estimate for all derivatives of $r_k$. We repeat the above argument for $d_f r_k$ and $d^*_f r_k$. For any $j, N \in  \inte_+$, we can find a $k_{j,N}$ large enough such that for any $k > k_{j,N}$, we have
$$
\| e^{\lam \psi^{}_{\scalebox{.7}{$\scriptscriptstyle E$}}} \nabla^j r_k \|^2_{L^2(K)}\leq C_{j,K,N} \lam^{-N}, 
$$
for $\lam > \lam_{j,k,N,0}$. 

\textbf{Step 4:} We apply interior Sobolev embedding to improve the statement in step $3$ into $L^{\infty}$ norm, by further shrinking $K$ if necessary. As a result, we have for $N$ large enough, there exists $\lam_{j,N,0}>0$ and $M_N$ such that we have
\begin{equation}
\|e^{\lam \psi^{}_{\scalebox{.7}{$\scriptscriptstyle E$}}}\nabla^j\lbrace \tilde{\zeta}_E - e^{-\lam \psi^{}_{\scalebox{.7}{$\scriptscriptstyle E$}}} (\sum_{i=0}^{M_N} \omega_{E,i}(\lam) )\rbrace \|^2_{L^\infty(K)} \leq C_{j,N} \lam^{-N+2j}
\end{equation}
for $\lam < \lam_{j,N,0}$. Finally, we observe that $\|  \nabla^j \omega_{E,i}(\lam)\|^2_{L^\infty(K)}\leq C_{i,j} \lam^{-i+j+\frac{1}{2}}$ and hence obtain the result by dropping redundant terms in the approximation series.

\end{proof}

Finally, we restrict on a sufficiently small neighborhood $W_E$ of $v_E$. Since the operator $I$ is given by an integral with an exponential decay $e^{\lam \Psi}$ along flow line, we can apply lemma \ref{stat_phase_exp} to obtain an expansion
$$
\omega_{E,i}(\lam) = \lam^{-\frac{1}{2}}(\omega_{E,i,0}+ \omega_{E,i,1}\lam^{-1} + \omega_{E,i,2}\lam^{-2}+\dots).
$$
By regrouping terms according to their orders of $\lam$, we obtain an expansion of the form given in equation \eqref{bwkbexpansion1}.


\subsection{Relation between $\omega_{S,0}$ and $\omega_{E,0}$}\label{leadingrelation}
From section \ref{WKB_method}, we constructed a WKB approximation in $W_E$\[
\zeta_E=e^{-\lam \psi^{}_{\scalebox{.7}{$\scriptscriptstyle E$}}}(\omega_{E,0}(\lam)+\omega_{E,1}(\lam)+\cdots).\]
In particular, $\omega_{E,0}(\lam)$ is given by
\begin{equation}\label{firstterm}
\omega_{E,0}(\lam)
= \frac{1}{2}(\int_{-\infty}^0 e^{\int_s^0 \frac{1}{2}\tau_{\epsilon}^*(M_{g^{}_{\scalebox{.6}{$\scriptscriptstyle E$}}})d\epsilon}\; \tau^*_s( e^{\lam \Psi} (\iota_{2\nabla f}+\iota_{\nabla g^{}_{\scalebox{.7}{$\scriptscriptstyle S$}}}) \chi_S \omega_{S,0} ) ds).
\end{equation}
In this section, we study the relation between integrals of $\omega_{S,0}$ and $\omega_{E,0}$ which is used in lemma \ref{homotopywkbcal}. We begin by recalling lemma \ref{stat_phase_exp_NB_1}. Let $M$ be a $n$-dimensional manifold and $S$ be a $k$-dimensional submanifold in $M$, with a neighborhood $B$ of $S$ which can be identified as the normal bundle $\pi: NS \rightarrow S$. Suppose $\varphi: B \rightarrow\real_{\geq0}$ is a Bott-Morse function with zero set $S$, we have

\begin{lemma}\label{stat_phase_exp_NB_2}
Let $\beta \in\Omega^*(B)$ which is vertically compact support along the fiber of $\pi$. Then, we have\[
\pi_*(e^{-\lam \varphi(x)}\beta) = (\frac{2\pi}{\lam})^{(n-k)/2} (\iota_{\vol(\Hess\varphi)}\beta)|_V (1+\order(\lam^{-1})),\]
where $\pi_*$ is the integration along fiber and $\vol(\Hess \varphi)$ is the volume polyvector field defined for the positive symmetric tensor $\Hess \varphi$ along fibers of $\pi$.
\end{lemma}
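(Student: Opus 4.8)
The plan is to reduce Lemma~\ref{stat_phase_exp_NB_2} to the scalar stationary-phase estimate in Lemma~\ref{stat_phase_exp} fiber by fiber. First I would set up coordinates adapted to the normal-bundle identification $\pi: NS \to S$: choose local coordinates $y_1,\dots,y_k$ on $S$ and fiber-linear coordinates $x_1,\dots,x_{n-k}$ on the fibers, so that $\pi(x,y)=y$ and $S = \{x=0\}$. Since $\varphi$ is Bott--Morse with critical (and zero) set exactly $S$, for each fixed $y$ the function $x \mapsto \varphi(x,y)$ is a genuine Morse function of the $(n-k)$ fiber variables with unique nondegenerate minimum at $x=0$ where it vanishes, and its fiberwise Hessian $\Hess\varphi|_{N_yS}$ is positive definite. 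The form $\beta$, being vertically compactly supported, restricts on each fiber to a compactly supported form, and integration along the fiber $\pi_*$ picks out precisely the top-degree-in-$dx$ component; writing $\beta = \sum_I \beta_I(x,y)\, dx^I \wedge (\text{horizontal})$, the fiber integral of $e^{-\varphi/\he}\beta$ over $\{y\}\times \mathbb{R}^{n-k}$ is a sum of scalar integrals $\int e^{-\varphi(x,y)/\he}\beta_I(x,y)\,dx$ against the top vertical volume, with $y$ as a smooth parameter.

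Next I would apply Lemma~\ref{stat_phase_exp} to each such scalar fiber integral with $N=1$, which gives
\[
\int_{N_yS} e^{-\varphi(x,y)/\he} u(x,y)\, dx = (2\pi\he)^{(n-k)/2}\,\frac{u(0,y)}{(\det \Hess_x\varphi(0,y))^{1/2}}\,(1+\order(\he)),
\]
uniformly in $y$ on compact sets (the constants in the lemma depend only on sup-norms of finitely many derivatives of $u$, which vary continuously in $y$). The only remaining point is to check that assembling these fiberwise leading terms over all the vertical multi-indices $I$ produces exactly the invariant expression $(\iota_{\vol(\Hess\varphi)}\beta)|_S$: the volume polyvector field $\vol(\Hess\varphi)$ associated to the positive symmetric tensor $\Hess\varphi$ on the fibers is, in the chosen coordinates, $(\det \Hess_x\varphi)^{-1/2}\,\partial_{x_1}\wedge\cdots\wedge\partial_{x_{n-k}}$ (up to the sign fixing the chosen orientation), and contracting $\beta$ against it extracts exactly the coefficient $\beta_{1\cdots(n-k)}(0,y)/(\det\Hess_x\varphi(0,y))^{1/2}$ — matching the sum of the scalar leading terms. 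Thus both sides agree to leading order, and the $(1+\order(\he))$ error is uniform on compacta by the uniformity in Lemma~\ref{stat_phase_exp}.

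The main obstacle I anticipate is the bookkeeping needed to see that the naive coordinate computation is genuinely independent of the choice of adapted coordinates and of the normal-bundle identification $B\cong NS$ — i.e.\ that $(\det\Hess_x\varphi)^{-1/2}\,\partial_{x_1}\wedge\cdots$ transforms correctly under changes of fiber coordinates (the Jacobian factors from $dx$ and from $\det\Hess_x\varphi$ must cancel against the transformation of $\beta$'s coefficient), so that $\iota_{\vol(\Hess\varphi)}\beta$ is a well-defined object on $S$. This is the same determinant-cancellation phenomenon already present in Lemma~\ref{stat_phase_exp} through the factor $\Im = \pm\det(d\tilde x/dx)$; here it just has to be carried out with the extra horizontal parameters $y$ and phrased invariantly. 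Once that invariance is verified the rest is a direct parametrized application of the scalar lemma.
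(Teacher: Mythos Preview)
Your proposal is correct and is exactly the natural argument. In fact, the paper does not give an explicit proof of this lemma at all: it is first stated as Lemma~\ref{stat_phase_exp_NB_1} with a forward reference to section~\ref{leadingrelation}, but in that section it is merely restated as Lemma~\ref{stat_phase_exp_NB_2} and then used without proof. The authors evidently regard it as an immediate parametrized consequence of the scalar stationary-phase Lemma~\ref{stat_phase_exp}, which is precisely the reduction you carry out; your write-up therefore supplies the details the paper omits.
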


We use the notations in section \ref{exp_decay} and assume there is an identification of $W_S$ and $W_E$ with the normal bundle $NV_S$ and $NV_E$ of $V_S$ and $V_E$ respectively. We use $\pi_S$ and $\pi_E$ to stand for the bundle maps respectively. We have the following lemma which relates the integration of $\omega_{E,0}$ and $\omega_{S,0}$ along the fibers of $\pi_E$ and $\pi_S$ respectively.
\begin{lemma}\label{lem:explicit_normal_computation}
	Assume $\omega_{S,0} \in \wedge^{top} NV_S^*$ on $V_S$, then\[
\pi_{E*}(e^{-\lam g^{}_{\scalebox{.7}{$\scriptscriptstyle E$}}}\omega_{E,0}) =  \varrho^*\pi_{S*}(e^{-\lam g^{}_{\scalebox{.7}{$\scriptscriptstyle S$}}}\omega_{S,0})(1+\mathcal{O}(\lam^{-1})),\]
where $\varrho:V_E\rightarrow V_S$ is the projection map using the identification $V_E \equiv (V_S \times \real) \cap W_E$ given by $\tau$ (flow of $\nabla \psi_E$). Furthermore, we have $\omega_{E,0} \in \wedge^{top} NV_E^*$ on $V_E$.
\end{lemma}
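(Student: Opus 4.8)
The plan is to substitute the explicit formula \eqref{firstterm} for $\omega_{E,0}(\hp)$ into $\pi_{E*}(e^{-g_E/\he}\omega_{E,0})$ and to recognise the outcome as a Bott--Morse stationary-phase integral concentrating at the point $p_S:=\varrho(v_E)\in V_S$, i.e.\ the point at which the flow line through $v_E$ meets $V_S$.

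First I would fix coordinates adapted to the geometry. Using the diffeomorphism $\tau$ of \eqref{taumap}, identify a neighbourhood of $\gamma$ with $U_S\times\real$ with coordinates $(u,t)$, and split $u=(w,v)$ so that $V_S=\{v=0,\,t=0\}$, $V_E=\{v=0\}$, $\varrho$ is $(w,0,t)\mapsto(w,0,0)$, and $\pi_E$ is the projection onto $\{v=0\}$ with fibre over $p_E=(w,0,t_0)\in V_E$ equal to $\{(w,v,t_0)\}$. Plugging \eqref{firstterm} in and exchanging the fibre integration with the $s$-integration turns $\pi_{E*}(e^{-g_E/\he}\omega_{E,0})(p_E)$ into a single integral over $(v,s)\in(\text{fibre})\times(-\infty,0]$, with $\chi_S$ replaceable by $1$ up to an exponentially small error; the integrand carries the exponential weight $\exp\big(\tfrac1\he(-g_E(w,v,t_0)+\Psi(w,v,t_0+s))\big)$, a differential form gotten from $(\iota_{2\nabla f}+\iota_{\nabla g_S})\omega_{S,0}$ by applying $\tau_s^{*}$ and the matrix factor $e^{\int_s^0\frac12\tau_\epsilon^{*}(M_{g_E})d\epsilon}$, and the scalar prefactor $\tfrac1{2\he}$.

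The geometric heart is to check that the phase $-g_E(w,v,t_0)+\Psi(w,v,t')$, viewed as a function of the $(n-\dim V_S)$ variables $(v,t')$ with $t'=t_0+s$, attains a nondegenerate maximum equal to $0$ at $(v,t')=(0,0)$, which corresponds to $p_S$. Indeed $-g_E(\cdot,\cdot,t_0)\le0$ vanishes precisely on $\{v=0\}$ since $g_E$ is Bott--Morse with zero set $V_E$ (Lemma~\ref{psi_f}), while $\Psi\le0$ vanishes precisely on $U_S=\{t'=0\}$ since $\Psi$ is Bott--Morse with critical set $U_S$ and $\Psi\equiv0$ there (Lemma~\ref{psi_f_g} together with $\psi_E=\psi_S$ on $U_S$); because $\Psi$ vanishes identically along $\{t=0\}$, all of its $v$- and mixed $vt$-derivatives vanish there, so the Hessian of the phase at $(0,0)$ is block diagonal, equal to $-\Hess_v g_E(p_S)$ on the fibre directions and to $\partial_t^2\Psi(p_S)<0$ on the $t'$-direction. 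With the phase localised, I would apply the Bott--Morse stationary-phase expansion (the method of Lemma~\ref{stat_phase_exp}, in the polyvector-field form of Lemma~\ref{stat_phase_exp_NB_2}) to read off the leading term. Since $\nabla g_S$ vanishes on $V_S$ (so the $\iota_{\nabla g_S}$ contribution, being odd in $v$ to leading order, falls into $\order(\he)$) and $\nabla f$ is tangent to the flow, hence spans the $\partial_t$-line complementing $NV_E$ inside $NV_S$, the contraction $(\iota_{2\nabla f}+\iota_{\nabla g_S})\omega_{S,0}$ restricts on $V_S$ to $2\,\iota_{\nabla f}\omega_{S,0}\in\bigwedge^{\mathrm{top}}NV_E^{*}$; pushing this through $\tau_s^{*}$ (which preserves the $v$-coordinates, hence the line bundle $\bigwedge^{\mathrm{top}}NV_E^{*}$ along $V_E$) gives the assertion $\omega_{E,0}\in\bigwedge^{\mathrm{top}}NV_E^{*}$, provided one also checks that the matrix factor $e^{\int_s^0\frac12\tau_\epsilon^{*}(M_{g_E})d\epsilon}$ does not move the leading component out of that line bundle; together with $g_E=g_S+\Psi$ and the pointwise identities $\Hess_v g_E=\Hess_v g_S$, $\partial_t^2 g_S=-\partial_t^2\Psi$ on $V_S$, this identifies the Hessian-volume factor on the $E$-side with the one on the $S$-side. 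The prefactor $\tfrac1{2\he}$ in \eqref{firstterm}, times the $2$ from $\iota_{2\nabla f}$, produces the announced $\tfrac1\he$, and matching the remaining numerical constants gives $\pi_{E*}(e^{-g_E/\he}\omega_{E,0})=\tfrac1\he\,\varrho^{*}\pi_{S*}(e^{-g_S/\he}\omega_{S,0})(1+\order(\he))$.

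I expect the main obstacle to be this final constant-tracking: controlling the transport-type factor $e^{\int_s^0\frac12\tau_\epsilon^{*}(M_{g_E})d\epsilon}$, that is, showing that along $V_E$ its effect on the top normal form is the correct one and does not perturb the leading coefficient, together with keeping the several Hessian determinants and orientation signs straight through the stationary-phase expansion so that the constant is exactly $1$ rather than an extraneous numerical or normalising factor. This is the familiar subtlety in the normalisation of WKB transport solutions, already handled in \cite{HelSj4}; once it and the phase computation above are in place, everything else reduces to a routine application of the Bott--Morse stationary-phase lemma.
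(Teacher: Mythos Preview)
Your overall strategy—merging the $s$-integral defining $\omega_{E,0}$ and the fibre integral of $\pi_{E*}$ into a single Bott--Morse stationary phase in the variables $(v,t')$—is sound and is a legitimate streamlining of the paper's argument, which instead applies Lemma~\ref{stat_phase_exp_NB_2} twice in sequence (first in $t$ to extract $\omega_{E,0}$ pointwise, then in the normal directions of $V_E$). The phase analysis you outline is correct: the critical point is $(v,t')=(0,0)$ and the Hessian is block diagonal.

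There is, however, a genuine slip that is precisely where the hard work lies. The $v$-block of the Hessian of your combined phase is $-\Hess_v g_E$ evaluated at $p_E=(w,0,t_0)$, \emph{not} at $p_S=(w,0,0)$ as you write. The identities $\Hess_v g_E=\Hess_v g_S$ and $\partial_t^2 g_S=-\partial_t^2\Psi$ that you invoke hold only on $V_S$, so they cannot be applied directly to the $v$-block sitting at $p_E$. Consequently your Hessian-volume matching, as written, does not close.

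What repairs this is exactly the transport factor you flag as an obstacle, but its role is more than ``not moving the leading component out of the line bundle'': on $\bigwedge^{\mathrm{top}}NV_E^*$ it acts as multiplication by the scalar $\bigl(\det\Hess g_E(p_E)/\det\Hess g_E(p_S)\bigr)^{1/2}$, which is the content of the paper's Lemma~\ref{simplification}. This scalar cancels the $\det\Hess g_E(p_E)^{-1/2}$ produced by your stationary-phase step and replaces it with $\det\Hess g_E(p_S)^{-1/2}$; only then do your identities on $V_S$ finish the job. The paper proves Lemma~\ref{simplification} by differentiating along the flow and using the eikonal relation $\|\nabla\psi_E\|=\|\nabla f\|$; it is not simply imported from \cite{HelSj4}. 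Once you insert this computation, your combined-integral route and the paper's two-step route become equivalent.
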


\begin{proof}
We use the coordinates $u_1,\dots,u_{n-1},t$ for $W$, where $u_1,\dots ,u_{n-1}$ are coordinates of $U_S$. We further assume that $\{ u_{s+1} = 0, \dots ,u_{n-1} = 0\}=V_S$. From lemma \ref{psi_f_g}, $\Psi\leq0$ is a Bott-Morse function with zero set $U_S$. Applying lemma \ref{stat_phase_exp_NB_2} to the equation \eqref{firstterm}, we have
\begin{eqnarray*}
& &\omega_{E,0}(u,t) \\
&\equiv&(\frac{\pi}{2\lam})^{1/2}(\ppd{t} (-\Psi)|_{t=0})^{-1/2} \Big( e^{\int_{-t}^0 \frac{1}{2}\tau^*_\epsilon (M_{g^{}_{\scalebox{.6}{$\scriptscriptstyle E$}}})\;d\epsilon}\; \tau^*_{-t}((\iota_{2\nabla f}  +\iota_{\nabla g^{}_{\scalebox{.7}{$\scriptscriptstyle S$}}}) \chi_S\omega_{S,0})\Big) ,
\nonumber
\end{eqnarray*}
modulo terms of $\mathcal{O}(\lam^{-1})$. From lemma \ref{psi_f}, $g_E\geq0$ is a Bott-Morse function with zero set $V_E$. Applying lemma \ref{stat_phase_exp_NB_2} again, we get, modulo terms of $\mathcal{O}(\lam^{-1})$, 
\begin{eqnarray*}
&&\pi_{E*}(e^{-\lam g^{}_{\scalebox{.7}{$\scriptscriptstyle E$}}}\omega_{E,0})(u,t)\\
&\equiv&(\frac{2\pi}{\lam})^{(n-s-1)/2} \iota_{\vol(\nabla^2 g^{}_{\scalebox{.7}{$\scriptscriptstyle E$}})}(\omega_{E,0})\\
&\equiv&\pi \Big( (\frac{2\pi}{\lam})^{(n-s)/2} \iota_{\vol(\nabla^2 g^{}_{\scalebox{.7}{$\scriptscriptstyle E$}})}  (\ppd{t}(-\Psi)|_{t=0})^{-1/2}
 ( e^{\int_{-t}^0 \frac{1}{2}\tau^*_\epsilon (M_{g^{}_{\scalebox{.6}{$\scriptscriptstyle E$}}})\;d\epsilon}\; \tau^*_{-t}(\iota_{2\nabla f}   \omega_{S,0}))\Big),\\
\end{eqnarray*}
for those $(u,t) \in V_E$. The term involving $\iota_{\nabla g_S}$ is dropped as $\tau^*_{-t}(d g_S)$ vanishes for $(u,t) \in V_E$. To make further simplifications, we need the following lemma.
\begin{lemma}\label{simplification}
Fixing a point $(u,t) \in V_E$, we have\[
e^{\int_{-t}^0 \frac{1}{2}\tau^*_\epsilon (M_{g^{}_{\scalebox{.6}{$\scriptscriptstyle E$}}})d\epsilon} = \Big(\frac{\det( \Hess g^{}_E)(u,t)}{\det(\Hess g^{}_E)(u,0)}\Big)^{1/2}\]
as operators on $\bigwedge^{top} NV_E^*$, where the right hand side acts as multiplication. Here $\nabla^2 g_E$ is treated as an operator acting on $NV_E$ using the metric tensor.
\vspace{0.5cm}
\end{lemma}
\noindent From the fact that $ \omega_{S,0} \in \bigwedge^{top} NV_S^*$ upon restricting to $V_S$, we have $\tau^*_{-t} (\iota_{\nabla f} \omega_{S,0}) \in \bigwedge^{top} NV_E^*$ for those $(u,t) \in V_E$ and
$$
\begin{array}{rl}
&\pi_{E*}(e^{-\lam g^{}_{\scalebox{.7}{$\scriptscriptstyle E$}}}\omega_{E,0})(u,t) \\
= &2\pi (\frac{2\pi}{\lam})^{(n-s)/2}(\ppd{t}(-\Psi)|_{t=0})^{-1/2}\Big( (\frac{\det( \Hess g^{}_{\scalebox{.7}{$\scriptscriptstyle E$}})(u,t)}{\det(\Hess g^{}_{\scalebox{.7}{$\scriptscriptstyle E$}})(u,0)})^{1/2}  \iota_{ \nabla f \wedge\vol( \nabla^2 g^{}_{\scalebox{.7}{$\scriptscriptstyle E$}})} \tau^*_{-t} (\omega_{S,0}) \Big).
\end{array}
$$
Notice that $\nabla f = \dd{t}$ on $V_E$, therefore we have $$(\ppd{t} (-\Psi)|_{t=0})^{1/2} \nabla f = \vol( \nabla^2_t(-\Psi)|_{t=0}),$$
where we view $W$ as a $\real$-bundle over $U_S$ and consider $\vol( \nabla^2_t(-\Psi)|_{t=0})$ as the volume vector field along its fibers.
Furthermore, we have the relation
$$
d\tau^*_{-t}((\frac{\det( \Hess g^{}_E)(u,t)}{\det(\Hess g^{}_E)(u,0)})^{1/2} \vol(\nabla^2 g^{}_E)(u,t)) = \vol(\nabla^2 g^{}_E)(u,0).
$$

\noindent Combining the above, we have
$$
\begin{array}{rl}
&\pi_{E*}(e^{-\lam g^{}_{\scalebox{.7}{$\scriptscriptstyle E$}}}\omega_{E,0})(u,t)\\
=&(2\pi)^{(n-s)/2} \lam^{(-n+s)/2} \Big( \tau^*_{-t}( \iota_{\vol(\nabla^2_t(-\Psi)|_{t=0}) \wedge \vol(\nabla^2 g^{}_{\scalebox{.7}{$\scriptscriptstyle E$}})|_{t=0}} \omega_{S,0}) \Big).
\end{array}
$$
Finally, from the relation $\Psi = g_E - g_S$, we get 
$$\vol(\nabla^2_t(-\Psi)) \wedge \vol(\nabla^2 g^{}_E) = \vol(\nabla^2 g^{}_S)$$
on $V_S$, where $\vol(\nabla^2 g^{}_S)$ is the volume polyvector field along the fibers of $\pi_S$.
Therefore, we have\[
\pi_{E*}(e^{-\lam g^{}_{\scalebox{.7}{$\scriptscriptstyle E$}}}\omega_{E,0})(u,t) \equiv \tau^*_{-t}(\pi_{S*}(e^{-\lam g^{}_{\scalebox{.7}{$\scriptscriptstyle S$}}}\omega_{S,0})(u,0))\]
modulo terms of $\mathcal{O}(\lam^{-1})$, for those $(u,t) \in V_E$.
\end{proof}

\begin{proof}[Proof of Lemma \ref{simplification}]
First of all, we have the equality\[
\frac{1}{2}M_{g^{}_{\scalebox{.7}{$\scriptscriptstyle E$}}}=\Hess g^{}_E-\frac{1}{2}\trace(\Hess g^{}_E),\]
on the set $\{ \nabla g^{}_E = 0 \}$. We can treat $\Hess g^{}_E$ as an operator acting on $NV_E^*$ as $g^{}_E$ is Morse along $V_S$. Restricting to $\bigwedge^{top}NV_E^*$, it is just $\trace(\Hess g^{}_E)$. Therefore we have\[
\frac{1}{2}M_{g^{}_{\scalebox{.7}{$\scriptscriptstyle E$}}} = \frac{1}{2}\trace(\Hess g^{}_E),\]
acting on $\bigwedge^{top} NV_E^*$.\\
On $V_E$, we have
\begin{eqnarray}
&& \nabla_t \Big( \int_{0}^t \frac{1}{2}\trace(\Hess g^{}_E)(u,\epsilon)\;d\epsilon) - \frac{1}{2}\log(\det(\Hess_u g^{}_E)(u,t))\;\Big)\label{differentingg1}\\
\nonumber&=& \frac{1}{2} \trace(\Hess g^{}_E)(u,t) - \frac{1}{2}\trace((\Hess g^{}_E(u,t))^{-1} \nabla_t(\Hess g^{}_E(u,t))).
\end{eqnarray}
We will show that the above expression vanish.\\

Restricting on the set $\{ \nabla g^{}_E = 0 \}$, for any vector fields $X,Y \in TW$, we have
\begin{eqnarray*}
\nabla_t (\nabla^2_u g^{}_E)(X,Y) &=& \nabla_t (\nabla^2 g^{}_E(X,Y)) - \nabla^2 g^{}_E (\nabla_t X,Y) - \nabla^2 g^{}_E (X,\nabla_t Y)\\
&=& \nabla_t\langle X, \nabla_Y\nabla g^{}_E\rangle - \langle \nabla_t X,\nabla_Y\nabla g^{}_E \rangle - \langle \nabla_X\nabla g^{}_E,\nabla_t Y \rangle\\
&=& \langle X, \nabla_t\nabla_Y\nabla g^{}_E \rangle + \langle \nabla_X\nabla g^{}_E, [\pd_t,Y] \rangle + \langle \nabla_X\nabla g^{}_E, \nabla_Y\pd_t \rangle\\
&=& \langle X, \nabla_Y\nabla_t\nabla g^{}_E \rangle + \langle (\nabla^2t\nabla^2g^{}_E) X, Y\rangle,
\end{eqnarray*}
and
\begin{eqnarray*}
\nabla^2(\nabla_t g^{}_E)(X,Y) &=& \langle \nabla_Y\nabla(\pd_t g^{}_E), X \rangle\\
&=& Y \langle \nabla(\pd_t g^{}_E), X \rangle - \langle \nabla(\pd_t g^{}_E), \nabla_Y X \rangle\\
&=& Y \langle \nabla_X \nabla g^{}_E, \pd_t \rangle + Y \langle \nabla g^{}_E, \nabla_X\pd_t \rangle - \langle \nabla_{\nabla_Y X} \nabla g^{}_E, \pd_t \rangle\\
&=& Y \langle X, \nabla_t \nabla g^{}_E \rangle + Y \langle \nabla g^{}_E, \nabla_X \pd_t \rangle - \langle \nabla_Y X, \nabla_t \nabla g^{}_E \rangle\\
&=& \langle X, \nabla_Y\nabla_t\nabla g^{}_E \rangle + (\nabla^2 g^{}_E\nabla^2t) X, Y \rangle.
\end{eqnarray*}
Therefore, we have
$$
\nabla_t (\nabla^2 g^{}_E) - \nabla^2(\nabla_t g^{}_E)
= [\nabla^2 t , \nabla^2 g^{}_E],
$$
where the Hessians are treated as endomorphisms of $TM$. Restricting the above equation to the subspace $NV_E$ and multipling by $(\nabla^2 g^{}_E)^{-1}$, we have
$$
\trace ( (\nabla^2g^{}_E)^{-1} (\nabla_t(\nabla^2 g^{}_E)) = \trace ( (\nabla^2 g^{}_E)^{-1}  \nabla^2 (\nabla_t g^{}_E)). 
$$
Finally, from the equation $|\nabla \psi_E|^2 = | \nabla f |^2$, we obtain 
$$
\nabla_t g^{}_E = \half | \nabla g^{}_E |^2.
$$
Applying $\nabla^2$ to both sides and restricting to $V_E$, it gives
$$
\nabla^2 (\nabla_t g^{}_E)(X,Y) = \langle \nabla^2 g^{}_E(X) ,\nabla^2 g^{}_E(Y)\rangle,
$$
or simply 
$$
\nabla^2 (\nabla_t g^{}_E) =( \nabla^2 g^{}_E)^2
$$
if we treat both sides as operators on $TM$.\\

Substituting it back into equation \eqref{differentingg1}, we find that the derivative in equation \eqref{differentingg1} vanish. Therefore we have
\begin{eqnarray*}
&&(\int_{0}^t \frac{1}{2}\trace(\Hess g^{}_E)(u,\epsilon)\;d\epsilon)\\ 
&= &\frac{1}{2}\log(\det(\Hess g^{}_E)(u,t)) - \frac{1}{2}\log(\det(\Hess(g^{}_E))(u,0)),
\end{eqnarray*}
which is the equation we needed.
\end{proof}

Therefore, we complete the proof of lemma \ref{homotopywkb} and \ref{homotopywkbcal} which are needed in the proof of our main theorem in section \ref{proof}.


\section{Conclusion}

From the semi-classical analysis of the Witten twisted Green's operator in section \ref{approximation}, we obtain our main theorem \ref{main_theorem} which can be viewed as an enhancement of the original Witten deformation of de Rham complex, concerning cohomology of the manifold $M$, to one concerning its rational homotopy type by incorporating wedge product structures. In \cite{fukaya05}, Fukaya proposed a differential geometric approach to the Strominger-Yau-Zaslow (SYZ) by relating A-model holomorphic disks instantons of a Calabi-Yau manifold equipped with Lagrangian torus fibration, to certain Witten twisted differential constructed from the symplectic structure. Proving theorem \ref{main_theorem} provides essential analytical technique for such an approach. For instance, the semi-classical analysis of Witten twisted Green's operator, can be applied to obtain a beautiful geometric interpretation of the complicated scattering diagram in \cite{kwchan-leung-ma}. 

{\bf Acknowledgements:} 
The authors thank Kwokwai Chan, Cheol-Hyun Cho, Yau Heng Tom Wan and Siye Wu for
useful discussions. Kaileung Chan would like to thank Siye Wu for his kind hospitality. Ziming Nikolas Ma would like to thank Prof. Shing-Tung Yau for the support during his visit to Yau Mathematical Sciences Center, Tsinghua University. Naichung Conan Leung were supported in part by grants from the Research Grants Council of the Hong Kong Council of the Hong Kong Special Administrative Region, China (Project No. CUHK 402012, Project No. CUHK 14302215 and Project No. CUHK 14303516).

\bibliographystyle{amsplain}
\bibliography{fukayawitten}

\end{document}